\documentclass[a4paper,12pt]{amsart}

\usepackage{amssymb}
\usepackage{latexsym}
\usepackage{amsmath}
\usepackage{amscd}
\usepackage{graphicx}

\title[Galois obstruction]
{The first Galois obstruction in the Johnson cokernel}

\author{Shigeyuki Morita}
\address{Graduate School of Mathematical Sciences, 
The University of Tokyo, 
3-8-1 Komaba, 
Meguro-ku, Tokyo, 153-8914, Japan}
\email{shmori314@gmail.com}
\author{Takuya Sakasai}
\address{Graduate School of Mathematical Sciences, 
The University of Tokyo, 
3-8-1 Komaba, 
Meguro-ku, Tokyo, 153-8914, Japan}
\email{sakasai@ms.u-tokyo.ac.jp}
\author{Masaaki Suzuki}
\address{Department of Frontier Media Science, 
Meiji University, 
4-21-1 Nakano, Nakano-ku, Tokyo, 164-8525, Japan}
\email{mackysuzuki@meiji.ac.jp}

\thanks{
The authors were partially supported by KAKENHI (No.24K06740, 26H01994, No.26K22247), 
Japan Society for the Promotion of Science, Japan.}

\dedicatory{Dedicated to Hiroaki Nakamura in celebration of his 60th birthday}

\subjclass[2000]{Primary~17B40, Secondary~17B65, 20J06, 32G15}
\keywords{mapping class group, Johnson homomorphism, 
Galois obstruction, Enomoto-Satoh trace map}

\newtheorem{thm}{Theorem}[section]
\newtheorem{prop}[thm]{Proposition}
\newtheorem{lem}[thm]{Lemma}
\newtheorem{cor}[thm]{Corollary}
\theoremstyle{definition}
\newtheorem{definition}[thm]{Definition}

\newtheorem{remark}[thm]{Remark}

\newtheorem{question}[thm]{Question}

\begin{document}

\newcommand{\Ker}{\mathop{\mathrm{Ker}}\nolimits}
\newcommand{\Hom}{\mathop{\mathrm{Hom}}\nolimits}
\renewcommand{\Im}{\mathop{\mathrm{Im}}\nolimits}
\newcommand{\Coker}{\mathop{\mathrm{Coker}}\nolimits}

\newcommand{\Der}{\mathop{\mathrm{Der}}\nolimits}
\newcommand{\Out}{\mathop{\mathrm{Out}}\nolimits}
\newcommand{\Aut}{\mathop{\mathrm{Aut}}\nolimits}
\newcommand{\Q}{\mathbb{Q}}
\newcommand{\Z}{\mathbb{Z}}
\newcommand{\R}{\mathbb{R}}

\begin{abstract}
We explicitly determine the first Galois obstruction in the cokernel of the Johnson homomorphism 
of the mapping class group of a surface of genus $g$, for every genus $g \ge 2$.
It is described as a sum of two terms which are considerably different in character.
One lies in the kernel of the Enomoto-Satoh trace map, whereas
the other belongs to a certain ideal which vanishes 
upon passage to the closed surface case. 
\end{abstract}

\renewcommand\baselinestretch{1.1}
\setlength{\baselineskip}{16pt}

\newcounter{fig}
\setcounter{fig}{0}

\maketitle


\section{Introduction}
Let $\Sigma_{g,1}$ be a compact oriented surface of genus $g$ with one boundary
component and let $\mathcal{M}_{g,1}$ be the mapping class group of
$\Sigma_{g,1}$ relative to the boundary. 
The group $\mathcal{M}_{g,1}$ naturally acts on the fundamental group $\pi_1 (\Sigma_{g,1})$ of $\Sigma_{g,1}$, 
which is isomorphic to the free group of rank $2g$.  
The Dehn-Nielsen-Baer theorem implies that the induced homomorphism 
$\mathcal{M}_{g,1} \to \Aut \pi_1 (\Sigma_{g,1})$ is injective. 
We denote by 
$\{\mathcal{M}_{g,1}(k); k=0, 1,2,\ldots\}$ the {\it Johnson filtration} of $\mathcal{M}_{g,1}$. 
Here $\mathcal{M}_{g,1}(k)$ is the subgroup of $\mathcal{M}_{g,1}$ consisting of 
mapping classes which trivially act on the $k$-th nilpotent quotient of $\pi_1 (\Sigma_{g,1})$. 
In this notation, $\mathcal{M}_{g,1}(0)=\mathcal{M}_{g,1}$, and 
$\mathcal{M}_{g,1} (1)$ is the subgroup known as the Torelli group, which acts trivially on 
the abelianization $H_\Z:=H_1 (\Sigma_{g,1};\mathbb{Z})$ of $\pi_1 (\Sigma_{g,1})$. 
Generalizing Johnson's pioneering work \cite{j,j2}, the first author \cite{morita93} defined  
the $k$-th Johnson homomorphism
\[\tau_{g,1}^\Z (k) \colon \mathcal{M}_{g,1}(k) \longrightarrow \mathfrak{h}_{g,1}^\Z (k)\]
to investigate the structure of $\mathcal{M}_{g,1}(k)$. Here  
\[\mathfrak{h}_{g,1}^\Z=\bigoplus_{k=1}^{\infty} \mathfrak{h}_{g,1}^\Z (k)\]
denotes the graded Lie algebra consisting of positive-degree symplectic derivations of the
free Lie algebra $\mathcal{L}_{g,1}^\Z=\oplus_{k=1}^\infty \mathcal{L}_{g,1}^\Z (k)$ 
generated by $H_\Z$. 
The totality of these homomorphisms for all $k\geq 1$ induces an
embedding of graded Lie algebras
\[\tau_{g,1}^\Z \colon \bigoplus_{k=1}^\infty \left(\mathcal{M}_{g,1}(k)/\mathcal{M}_{g,1}(k+1)\right) 
\hookrightarrow \mathfrak{h}_{g,1}^\Z.\]
It is known that this embedding is $\left(\mathcal{M}_{g,1}/\mathcal{M}_{g,1}(1)\right)$-equivariant.  
Here $\mathcal{M}_{g,1}/\mathcal{M}_{g,1}(1)$ is naturally identified with 
the symplectic automorphism group $\mathrm{Sp} (H_\Z)$ of 
$H_\Z$ with respect to the intersection form
\[\mu \colon H_\Z \otimes H_\Z \longrightarrow \Z \qquad (x \otimes y \longmapsto x \cdot y),\]
which is non-degenerate and skew-symmetric. 
Explicitly, $\mathfrak{h}_{g,1}^\Z (k)$ is given by 
\[\mathfrak{h}_{g,1}^\Z (k)=\Ker \left ( H_\Z \otimes \mathcal{L}_{g,1}^\Z (k+1) \xrightarrow{[ \cdot , \cdot]} 
\mathcal{L}_{g,1}^\Z (k+2)\right)\]
and it is naturally embedded in $H_\Z^{\otimes (k+2)}$ as an $\mathrm{Sp} (H_\Z)$-module. 

Now the following two problems are fundamental in the theory of Johnson homomorphisms:
\begin{itemize}
\item[(1)] Determine the image of $\tau_{g,1}^\Z$.
\item[(2)] Give a {\it topological} meaning of the cokernel $\Coker \tau_{g,1}^\Z$. 
\end{itemize}

As for problem (1), building on results of 
Johnson \cite{j,j2}, the first author \cite{morita93, morita99}, Hain \cite{haint} and 
Asada-Nakamura \cite{an}, 
the authors computed the rational image up to degree $7$. 
More recently, works by Kupers and Randal-Williams \cite{KRW}, 
Felder-Naef-Willwacher \cite{FNW} and Naef-Willwacher \cite{NW} have provided 
a general description of the rational image. 

In the authors' computation of the images of Johnson homomorphisms, they made heavy use of 
the {\it Enomoto-Satoh trace map} introduced in \cite{es}. 
For each degree $k \ge 3$, Enomoto and Satoh defined a homomorphism
\[ES_k^\Z: \mathfrak{h}_{g,1}^\Z (k)\longrightarrow \mathfrak{a}_g^\Z (k-2)=(H_\Z^{\otimes k})_{\Z/k \Z},\]
where the target denotes the degree $(k-2)$ part of the associative version among  
Kontsevich's symplectic derivation Lie algebras (over $\Z$), and 
$(H_\Z^{\otimes k})_{\Z/k \Z}$ denotes the coinvariants of $H_\Z^{\otimes k}$ 
under the cyclic action on the tensor factors. 
They proved the important result that
$ES_k^\Z$ vanishes identically on the Johnson image $\Im \tau_{g,1}^\Z (k)\subset \mathfrak{h}_{g,1}^\Z (k)$.
Therefore, we may use $ES_k^\Z$ to detect the cokernel of $\tau_{g,1}^\Z (k)$. 

As for the problem (2), there have been several (partial) answers. 
One approach comes from the theory of invariants of three-dimensional manifolds, 
particularly from the theory of homology cobordisms over surfaces.
Here we refer only to papers of Garoufalidis-Levine \cite{gl} and Habiro-Massuyeau \cite{hm} 
for their close connections with Johnson homomorphisms. 
Another approach, discovered earlier, comes from number theory. 

In what follows, we consider the rational versions of the above objects. 
That is, we tensor the above modules with the rationals $\Q$ 
and consider the homomorphism
\[\tau_{g,1}:=\tau_{g,1}^\Z \otimes \Q \colon 
\bigoplus_{k=1}^\infty \left(\mathcal{M}_{g,1}(k)/\mathcal{M}_{g,1}(k+1)\right) \otimes \Q 
\hookrightarrow \mathfrak{h}_{g,1}.\]
Here $\mathfrak{h}_{g,1}:=\mathfrak{h}_{g,1}^\Z \otimes \Q$. 
We also use analogous notation: 
$H:=H_\Z \otimes \Q$, $\mathcal{L}_{g,1}=\mathcal{L}_{g,1}^\Z \otimes \Q$, 
$ES_k:=ES_k^\Z \otimes \Q$ and $\mathfrak{a}_g (k):=\mathfrak{a}_g^\Z (k) \otimes \Q$. 
We often abbreviate $\mathrm{Sp} (H)$ to $\mathrm{Sp}$. 

The (graded version of) {\it arithmetic} Johnson homomorphism introduced in 
Hain \cite{hain} gives rise to an injective homomorphism 
\[\hat{\tau}_{g,1} \colon 
\mathcal{L}(\sigma_3,\sigma_5,\ldots)\longrightarrow
(\Coker \tau_{g,1})^{\mathrm{Sp}}= (\mathfrak{h}_{g,1}/\Im \tau_{g,1})^{\mathrm{Sp}}\]
where the left-hand side denotes the motivic Lie algebra, which is a free Lie algebra generated by 
elements $\sigma_{2k+1}\ (k=1,2,\ldots)$ of degree $4k+2$,
and the right-hand side denotes the $\mathrm{Sp}$-invariant part of
the quotient of $\mathfrak{h}_{g,1}$
divided by the image $\Im \tau_{g,1}$ of the Johnson homomorphism.
Hain explained the authors that the above homomorphism is not canonically defined.
It depends on choice of free generators for its image 
$\mathrm{Im}\, \hat{\sigma}_{g,1}$ (see Hain's paper \cite{hain}, especially Remark 9.1). 
Elements of the image of the above homomorphism $\hat{\tau}_{g,1}$ are
called {\it Galois obstructions}, whose representatives in $\mathfrak h_{g,1}$ are well-defined 
up to a nonzero scalar and modulo the Johnson image $\Im \tau_{g,1}$. 
Along the lines of deep theories of Grothendieck, Ihara and Deligne,
the appearance of these elements was conjectured by
Oda already in the 1980s. 
Their existence was first established independently by Nakamura \cite{nakamura} 
and Matsumoto \cite{matsumoto}, 
and the whole of Oda's conjecture was proved by Takao \cite{takao}. 
The injectivity of the above homomorphism ultimately follows from a fundamental result of Brown \cite{brown}. 
The problem is then to determine how these Galois obstructions sit inside $\mathfrak{h}_{g,1}$.
We refer to Hain's article cited above as well as Matsumoto's article
\cite{matsumotopcmi}
for details.


The purpose of this paper is to study the relationship between the Enomoto-Satoh trace map 
and Galois obstructions in degree 6, the first degree in which the two objects meet.
We also describe the first Galois obstruction $\hat{\tau}_{g,\ast}(6)(\sigma_3)$ explicitly. 
Our main results are stated in the next section.

Our computations are carried out mainly in terms of $\mathrm{Sp}$-modules. 
We refer to Fulton-Harris \cite{fh} for details of the representation theory of symplectic groups. 
Here we label irreducible polynomial representations of $\mathrm{Sp} (H)$ by Young diagrams.  
We will use the following four irreducible representations, all of which already appeared in Johnson's work \cite{j}: 
\begin{align*}
&[0] = \mathbb{Q} \ \ \text{(trivial $\mathrm{Sp}(H)$-module)}, \qquad [1] = H, \qquad
[1^2]=\wedge^2 H/\mathbb{Q}, \\
&[1^3]= (\wedge^3 H)/H, \qquad [2] =S^2 H = \mathfrak{sp} (H) \ \ 
\text{(the Lie algebra of $\mathrm{Sp}(H)$)}.
\end{align*}
\noindent
The {\it symplectic element} 
$\omega_0 \in (\wedge^2 H)^\mathrm{Sp} = (H^{\otimes 2})^\mathrm{Sp} \cong \mathbb{Q}$ 
is defined by \begin{align*}
\omega_0&=x_1\wedge y_1+\cdots + x_g\wedge y_g \ \ \text{(in $\wedge^2 H$)}\\
&= x_1\otimes y_1-y_1\otimes x_1+\cdots + x_g\otimes y_g-y_g\otimes x_g \ \ \text{(in $H^{\otimes 2}$)}
\end{align*}
using a symplectic basis $\{x_1, y_1, \ldots, x_g, y_g\}$ of $H$. 
We have the $\mathrm{Sp}$-irreducible decomposition $\wedge^2 H=[1^2] \oplus [0]$, 
where $[0]$ is generated by $\omega_0$.


\bigskip
\noindent
{\it Acknowledgements.} \ 
This work was motivated mainly by Hain's article \cite{hain},
in particular the discussion in \S 1.9 (``the most optimistic landscape'')
and Question 12.8.
The authors would like to thank Richard Hain 
for enlightening communications as well as helpful comments
to an earlier draft of this paper. Also, the first author would like to express 
his hearty thanks to 
Nariya Kawazumi, Makoto Matsumoto and Hiroaki Nakamura 
for helpful information and many valuable discussions 
over many years.

\section{Statement of the main results}\label{sec:results}
In our paper \cite[Section 7]{mss4}, we proved that 
the restriction of the Enomoto-Satoh trace map to the $\mathrm{Sp}$-invariant part
\[ES_6 \colon \mathfrak{h}_{g,1}(6)^{\mathrm{Sp}}\cong
\begin{cases}
\Q^5 \ (g\geq 3)\\
\Q^4 \ (g=2)
\end{cases}
\longrightarrow \quad  
\mathfrak{a}_g(4)^{\mathrm{Sp}}\cong\Q^2 \]
is surjective, and hence
\[\Ker ES_6\cong
\begin{cases}
\Q^3 \ (g\geq 3)\\
\Q^2 \ (g=2)
\end{cases}
\ \text{while}\quad
\Im \tau_{g,1}(6)^{\mathrm{Sp}}\cong
\begin{cases}
\Q^2\ (g\geq 3)\\
\Q \ \ (g=2)
\end{cases}.\]
Therefore we have a short exact sequence 
\begin{equation}
0\longrightarrow \Im \tau_{g,1}(6)^{\mathrm{Sp}}
\longrightarrow \Ker ES_6\longrightarrow \Q
\longrightarrow 0
\label{eq:es}
\end{equation}
\noindent 
for all $g\geq 2$. Thus, there exists an extra element,
denoted by $\varepsilon$, in $\Ker ES_6$ that does not belong to the Johnson image.

Now let  $\mathcal{M}_{g,\ast}$ be the mapping class group of an oriented closed surface
$\Sigma_g$ of genus $g$ fixing a base point $\ast \in\Sigma_g$. 
We have the Johnson filtration $\{\mathcal{M}_{g,\ast }(k); k=1,2,\ldots\}$ 
and the $k$-th Johnson homomorphism
\[\tau_{g,\ast}^\Z (k) \colon \mathcal{M}_{g,\ast}(k)\longrightarrow\mathfrak{h}_{g,\ast}^\Z(k),\]
where $\mathfrak{h}_{g,\ast}^\Z=\oplus_{k=1}^{\infty} \mathfrak{h}_{g,\ast}^\Z (k)$ is also 
a graded Lie algebra. We have a natural projection 
$\mathfrak{h}_{g,1}^\Z \twoheadrightarrow \mathfrak{h}_{g,\ast}^\Z$,  
and we denote the kernel by $\mathfrak{j}_{g,1}^\Z$, 
a Lie ideal of $\mathfrak{h}_{g,1}^\Z$ (see \cite[Section 3]{mss4}). 
We denote their rational versions by $\tau_{g,\ast}$, $\mathfrak{h}_{g,\ast}$ and $\mathfrak{j}_{g,1}$. 
We have $\mathfrak{h}_{g,\ast}(k)=\mathfrak{h}_{g,1}(k)/\mathfrak{j}_{g,1}(k)$. 
It was proved in \cite[Proposition 3.5]{mss4} that $\Im \tau_{g,1}^\Z (k) 
\cap \mathfrak{j}_{g,1}^\Z (k)=\{0\}$ for all $k \ge 3$. 
In Section 7 of the same paper, 
we also proved that the restriction of $ES_6$ to $\mathfrak{j}_{g,1}(6)^{\mathrm{Sp}}$ 
gives an isomorphism 
$ES_6 \colon  \mathfrak{j}_{g,1}(6)^{\mathrm{Sp}} \xrightarrow{\cong} \Q^2$. 
Hence we have a direct sum decomposition
\begin{equation}
\mathfrak{h}_{g,1}(6)^{\mathrm{Sp}}=\Ker ES_6\oplus 
\mathfrak{j}_{g,1}(6)^{\mathrm{Sp}}.
\label{eq:ds}
\end{equation}

It is known that the (graded version of) arithmetic Johnson homomorphism 
descends to the pointed-surface setting associated with $\tau_{g,\ast}$,  
and hence induces injective homomorphism 
\[\hat{\tau}_{g,\ast} \colon
\mathcal{L}(\sigma_3,\sigma_5,\ldots)\longrightarrow
(\Coker \tau_{g,*})^{\mathrm{Sp}}=
(\mathfrak{h}_{g,*}/\Im \tau_{g,*})^{\mathrm{Sp}}.\]
The first Galois obstruction appears in degree $6$. Therefore 
\[\hat{\tau}_{g,\ast}(6) (\sigma_3)\in \mathfrak{h}_{g,*}(6)^{\mathrm{Sp}}=\mathfrak{h}_{g,1}(6)^{\mathrm{Sp}}
/\mathfrak{j}_{g,1}(6)^{\mathrm{Sp}}\cong \Ker ES_6.\]
We can now conclude that, in the above direct sum decomposition \eqref{eq:ds},
the $\Ker ES_6$-component of the first Galois obstruction
\[\sigma_3(g)=\hat{\tau}_{g,1}(6)(\sigma_3)\in \mathfrak{h}_{g,1}(6)^{\mathrm{Sp}}
= \Ker ES_6\oplus \mathfrak{j}_{g,1}(6)^{\mathrm{Sp}}\]
is non-trivial. 
Indeed, this part must be equal to a nonzero scalar multiple of 
the extra element $\varepsilon$, since it is uniquely defined up to a nonzero scalar and 
modulo the Johnson image (as mentioned in Introduction).

In \cite[Section 7]{mss4}, we constructed a basis $v_1, v_2, v_3, v_4, v_5$ of 
$\mathfrak{h}_{g,1}(6)^{\mathrm{Sp}}\cong\Q^5$ for $g\geq 3$. For $g=2$, 
we have $v_5=0$, and $v_1, v_2, v_3, v_4$ form a basis of $\mathfrak{h}_{2,1}(6)^{\mathrm{Sp}}\cong\Q^4$. 
The following is the main result of the present paper.

\begin{thm}
The first Galois obstruction 
$$
\sigma_3(g)=\hat{\tau}_{g,1}(6)(\sigma_3)\in \mathfrak{h}_{g,1}(6)^{\mathrm{Sp}}
=\Ker ES_6\oplus \mathfrak{j}_{g,1}(6)^{\mathrm{Sp}}
$$
in the cokernel of the Johnson homomorphism is described as a sum 
of two terms as follows.
$$
\sigma_3(g)=180(g-1)g \ \varepsilon-j
$$ where
\begin{align*}
\varepsilon=& \frac{2}{2g(2g+1)(2g+2)(2g+3)}v_1
+\frac{8}{(2g-2)2g(2g+1)(2g+2)}v_3\\
& +\frac{3}{(2g-2)(2g-1)2g(2g+1)}v_4\quad \in \Ker ES_6\\
j&=-9v_1 -7v_2-19v_3 -5v_4+10v_5\ \in \mathfrak{j}_{g,1}(6)^{\mathrm{Sp}}.
\end{align*}
The element $\varepsilon$ is determined up to a nonzero scalar and modulo the Johnson image
$\Im \tau_{g,1}(6)^{\mathrm{Sp}}\subset \Ker ES_6$
while $j$ is determined up to a nonzero scalar.
\label{th:main}
\end{thm}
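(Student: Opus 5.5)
We need an actual formula for the Galois obstruction in $\mathfrak{h}_{g,1}(6)$. The plan is to get one from a known explicit description of the image of $\sigma_3$ under the arithmetic Johnson homomorphism, and then to decompose it in the basis $v_1,\dots,v_5$ of \cite[Section 7]{mss4}.

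The source is the Ihara/Nakamura-type description of the degree 6 Soulé element acting on the fundamental group of a surface. This comes by degeneration to a maximally degenerate stable curve, for example a tangential base point given by pants decompositions. On each pair of pants, i.e.\ each copy of $\mathbb{P}^1-\{0,1,\infty\}$, $\sigma_3$ acts through the Ihara derivation $D_{\sigma_3}$. This derivation is determined in degree 6 of the free Lie algebra on two generators by its standard coefficients, e.g.\ $[x,[x,[x,[x,y]]]]$-type terms plus the required corrections. Gluing these local derivations, as in Nakamura's and Hain's treatment of the arithmetic Johnson homomorphism, yields a symplectic derivation of $\mathcal{L}_{g,1}$, and we take its $\mathrm{Sp}$-invariant projection. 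Concretely, I would construct a representative $\tilde\sigma\in\mathfrak{h}_{g,1}(6)$ as an explicit tensor in $H^{\otimes 8}$. I would then average it over $\mathrm{Sp}$, or equivalently contract it against the invariant basis elements, which are built from products of $\omega_0$ and contraction patterns (linear chord diagrams on 8 points). This expresses $\tilde\sigma$ as $\sum c_i v_i$ with coefficients rational in $g$.

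Next I decompose according to \eqref{eq:ds}. Since $ES_6|_{\mathfrak{j}_{g,1}(6)^{\mathrm{Sp}}}$ is an isomorphism onto $\Q^2$, the $\mathfrak{j}$-component is the unique $j'\in\mathfrak{j}_{g,1}(6)^{\mathrm{Sp}}$ with $ES_6(j')=ES_6(\tilde\sigma)$. To compute it, I would evaluate $ES_6$ on $v_1,\dots,v_5$ via the values already tabulated in \cite{mss4}. The $\mathfrak{j}$-component $-j$ then comes out as the stated combination, with constant coefficients (no $g$-dependence). The remainder $\tilde\sigma+j$ lies in $\Ker ES_6$. By \eqref{eq:es} it is nonzero modulo $\Im\tau_{g,1}(6)^{\mathrm{Sp}}$, and I identify it with $180(g-1)g\,\varepsilon$ modulo $\Im\tau_{g,1}(6)^{\mathrm{Sp}}$.

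To certify the formula for $\varepsilon$, I check two things. First, $ES_6(\varepsilon)=0$, which is a linear check using the $ES_6(v_i)$. Second, $\varepsilon\notin\Im\tau_{g,1}(6)^{\mathrm{Sp}}$. For the second check I use the explicit generators of $\Im\tau_{g,1}(6)^{\mathrm{Sp}}$ as brackets of lower-degree Johnson images, e.g.\ $[\tau(2),\tau(4)]$ and $[\tau(3),\tau(3)]$ invariants, computed in \cite{mss4}, and verify a rank condition. The denominators $2g(2g+1)(2g+2)(2g+3)$ etc.\ suggest that $\varepsilon$ is most naturally produced as a normalized projection onto an invariant, for instance via the Weingarten-type inverse of the Gram matrix of the chord-diagram basis. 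I would derive it that way and then rescale. For $g=2$ we have $v_5=0$, and the factor $(2g-2)$ in the denominators must be handled by clearing them against the factor $180(g-1)g$. So I would check the case $g=2$ separately with the four-element basis.

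The main obstacle is the first step: producing the correct explicit degree-6 derivation representing $\sigma_3$ on the whole surface with the right normalization, including the boundary contributions that feed $\mathfrak{j}_{g,1}$. For this I would try Hain's description via the universal elliptic and Hodge-theoretic splitting, reducing to genus-one and pants pieces and checking consistency with the symplectic condition. Only the $\mathfrak{j}$-component is genuinely sensitive to these choices; the $\Ker ES_6$ part is forced up to scalar by the argument above.
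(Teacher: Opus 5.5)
\textbf{There is a genuine gap: the decisive computation is never carried out.} Your route differs from the paper's. You compute $\sigma_3(g)$ from an explicit arithmetic derivation and then split it by $ES_6$.

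The $\Ker ES_6$-component being a multiple of $\varepsilon$ modulo $\Im\tau_{g,1}(6)^{\mathrm{Sp}}$ is not the content of the theorem. That part is already forced by \eqref{eq:es} together with the injectivity of $\hat{\tau}_{g,\ast}$, as in Section~\ref{sec:results}. The content is the exact $\mathfrak{j}_{g,1}(6)^{\mathrm{Sp}}$-component $-j$ and the relative coefficient $180(g-1)g$. Equivalently, it is the value of $ES_6(\sigma_3(g))$ measured against the class of $\varepsilon$.

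In your plan this can only come from the explicit degree-$6$ element $\tilde\sigma$. You label that step ``the main obstacle'' and then simply assert that it produces the stated combination. Your two checks, $ES_6(\varepsilon)=0$ and $\varepsilon\notin\Im\tau_{g,1}(6)^{\mathrm{Sp}}$, only reproduce Theorem~\ref{th:extra}. They say nothing about $j$ or about the factor $180(g-1)g$.

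The local input is also misdescribed. The Ihara element of $\sigma_3$ is homogeneous of degree $3$ in $\mathrm{Lie}(x,y)$, with leading term $[x,[x,y]]$; the $\mathrm{ad}(x)^4y$ term you cite is the leading term of $\sigma_5$. Turning the data at a maximally degenerate curve into an explicit element of the lower-central-series graded $\mathfrak{h}_{g,1}(6)$ is itself the nontrivial step you call ``gluing''.

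Your remark that the $\mathfrak{j}$-component is ``sensitive to choices'' is a warning sign. In degree $6$ the obstruction is canonical up to a scalar and $\Im\tau_{g,1}(6)$. Since $\Im\tau_{g,1}(6)\subset\Ker ES_6$, its $\mathfrak{j}$-component is canonical up to the same scalar. A computation whose $\mathfrak{j}$-part depends on choices would not be computing $\sigma_3(g)$.

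\textbf{The missing idea is the normalizer characterization, which lets the paper avoid any arithmetic computation.} Galois obstructions normalize the Johnson image, and $\mathcal{N}(6)/\Im\tau_{g,1}(6)\cong\Q$ (\cite[Theorem 7.5]{mss4} for $g\ge 3$, checked in the paper for $g=2$). So $\sigma_3(g)$ is characterized, up to scalar and $\Im\tau_{g,1}(6)$, as a generator of this quotient. Concretely, it is an $\mathrm{Sp}$-invariant $v$ with $[\mathfrak{h}_{g,1}(1),v]\subset\Im\tau_{g,1}(7)$; degree $1$ suffices because $\Im\tau_{g,1}$ is generated in degree $1$ \cite{haint}.

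This bracket lands in the $[1]\oplus[1^3]$-isotypic part of $\mathfrak{h}_{g,1}(7)$. Proposition~\ref{prop:str1} therefore reduces the condition to $f_1(v)=r(v)=s(v)=0$ (Proposition~\ref{prop:n}). These functionals are computed on $v_1,\dots,v_5$ as polynomials in $g$ (Section~\ref{sec:polyn}). The solution then falls out in three steps:
\begin{itemize}
\item Since $f_1(\varepsilon)=0$, the condition $f_1(v)=0$ forces the $\mathfrak{j}$-part to be a multiple of $2j_1-3j_2$.
\item The values $r(\varepsilon)=3(2g+1)/((g-1)g)$ and $r(2j_1-3j_2)=540(2g+1)$ fix the coefficient $180(g-1)g$.
\item $s$ also vanishes on $180(g-1)g\,\varepsilon-j$.
\end{itemize}

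If you want to keep the arithmetic route, you still need one of two things to pin down $j$: this normalizer criterion, or an actual computation of $ES_6(\tilde\sigma)$. Without one of them the formula remains unproved.
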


\begin{remark}\label{rem:inv_stable}
Let $H_{(g)}:=H_1 (\Sigma_{g,1};\Q)$. The natural inclusion $H_{(g)} \hookrightarrow H_{(g+1)}$ induced by 
the standard embedding $\Sigma_{g,1} \hookrightarrow \Sigma_{g+1,1}$ gives an inclusion 
$H_{(g)}^{\otimes (2k+2)} \hookrightarrow H_{(g+1)}^{\otimes (2k+2)}$.
However, it does {\it not} map $(H_{(g)}^{\otimes (2k+2)})^{\mathrm{Sp}(H_{(g)})}$ into 
$(H_{(g+1)}^{\otimes (2k+2)})^{\mathrm{Sp}(H_{(g+1)})}$. 
The same is true for the natural inclusion $\mathfrak{h}_{g,1}(2k) \hookrightarrow \mathfrak{h}_{g+1,1}(2k)$. 
On the other hand, it is classically known that the invariant tensors in $H_{(g)}^{\otimes (2k+2)}$ 
are described by linear chord diagrams with $2k+2$ vertices. 
More precisely, the tensors associated with linear chord diagrams form 
a basis of $(H_{(g)}^{\otimes (2k+2)})^{\mathrm{Sp}(H_{(g)})}$ when $g \ge k+1$. 
Consequently, $\mathfrak{h}_{g,1}(2k)^{\mathrm{Sp} (H_{(g)})}$ also 
admits a description in terms of linear chord diagrams 
(see \cite[Section 4]{morita99} and \cite[Sections 2, 6 and 7]{mss4}). 
Throughout this paper, we identity 
\[\mathfrak{h}_{3,1}(6))^{\mathrm{Sp} (H_{(3)})} \cong \mathfrak{h}_{4,1}(6)^{\mathrm{Sp} (H_{(4)})} 
\cong \cdots \cong \mathfrak{h}_{g,1}(6)^{\mathrm{Sp} (H_{(g)})} \cong \cdots\]
by identifying basis elements $v_1,v_2,v_3,v_4,v_5$ defined by the same chord-diagram formulas.
For $g=2$, we have $v_5=0$, and $v_1,v_2,v_3,v_4$ form a basis of $\mathfrak{h}_{2,1}(6)^{\mathrm{Sp}(H_{(2)})}$. 
Thus we identify $\mathfrak{h}_{2,1}(6)^{\mathrm{Sp}(H_{(2)})}$ with the subspace of 
$\mathfrak{h}_{3,1}(6)^{\mathrm{Sp}(H_{(3)})}$ spanned by $v_1,v_2,v_3,v_4$. 
\end{remark}

For some time, it had been an interesting question whether the Enomoto-Satoh trace map vanishes on the Galois obstructions or not. This problem was recently settled in Hain's article \cite{hain}
as follows. Kawazumi and Kuno \cite{kk} developed their theory 
of the Goldman-Turaev Lie bialgebra in which they gave a geometric description of
the Johnson homomorphism by replacing $\mathfrak{h}_{g,1}$ with this 
Lie bialgebra. Here the Turaev cobracket appears as an obstruction for the 
Johnson image. Alekseev, Kawazumi, Kuno and Naef \cite{akkn} proved that 
the kernel of this obstruction coincides with that of the Enomoto-Satoh obstruction. 
In the above article, Hain gives a survey of their theory 
together with his own results from the viewpoint of Hodge theory. In particular, 
Theorem 12.6 of that paper shows that the Turaev cobracket of 
$\sigma_{2k+1}(g)$ is non-trivial for all $k$. 
Hain notes that this result is essentially due to
Alekseev, Kawazumi, Kuno and Naef.
Since the kernel of the Turaev cobracket coincide with that of the Enomoto-Satoh trace map, 
as mentioned above, 
it follows that $ES_{4k+2}(\sigma_{2k+1}(g))\not=0$ for all $k \ge 1$ and $g \ge 2$.

We now determine the value of the Enomoto-Satoh trace map on the 
first Galois obstruction as follows. 
The restriction of the Enomoto-Satoh trace map $ES_6$ 
to the $\mathrm{Sp}$-invariant part decomposes as
\[ES_6=ES_6^1\oplus ES_6^2 \colon \mathfrak{h}_{g,1} (6)^\mathrm{Sp} \longrightarrow 
\mathfrak{a}_{g} (4)^\mathrm{Sp} \cong \mathbb{Q}^2.\]
Here
\[ES_6^i=
\begin{cases}
(12)(34)(56)\circ ES_6 & (i=1),\\
(12)(35)(46)\circ ES_6 & (i=2)
\end{cases}\]
and
\[(12)(34)(56), (12)(35)(46) \colon \mathfrak{a}_g(4)^{\mathrm{Sp}}\longrightarrow \Q\]
are the multi-contractions corresponding to $(12)(34)(56)$ and $(12)(35)(46)$, respectively. 
See \cite[Section 4.1]{morita99} for details on multi-contractions.

As already mentioned, the first Galois obstruction 
$\sigma_3(g)$ is well-defined up to a nonzero scalar and modulo the Johnson image 
and the Enomoto-Satoh trace map vanishes on the Johnson image.
It follows that the ratio $ES_6^1(\sigma_3(g))/ES_6^2(\sigma_3(g)) \in \mathbb{Q} \cup \{\infty\}$ 
is well-defined and completely determines the projective class of $ES_6(\sigma_3(g))$.

\begin{thm}
The value of the Enomoto-Satoh trace map
on the  first Galois obstruction $\sigma_3(g)$ for $g \geq 2$ is given by
\[ES_6^1(\sigma_3(g))/ES_6^2(\sigma_3(g))=2g-1.\]
\label{th:es}

\end{thm}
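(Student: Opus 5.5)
Since $ES_6$ vanishes on $\Ker ES_6$, Theorem~\ref{th:main} gives
\[
ES_6(\sigma_3(g)) = ES_6\bigl(180(g-1)g\,\varepsilon - j\bigr) = -ES_6(j).
\]
So the ratio in Theorem~\ref{th:es} depends only on the $\mathfrak{j}_{g,1}(6)^{\mathrm{Sp}}$-component
\[
j=-9v_1-7v_2-19v_3-5v_4+10v_5 .
\]
The scalar $-1$ and the overall normalization of $\sigma_3(g)$ cancel in the ratio. By linearity it is enough to compute the two numbers $ES_6^1(v_i)$ and $ES_6^2(v_i)$ for $i=1,\dots,5$, and then form
\[
\frac{\sum_i c_i\,ES_6^1(v_i)}{\sum_i c_i\,ES_6^2(v_i)}, \qquad (c_1,\dots,c_5)=(-9,-7,-19,-5,10).
\]

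To compute these values I would use the chord-diagram description of the $v_i$ from \cite[Section 7]{mss4}. Each $v_i$ is a linear combination of tensors in $H^{\otimes 8}$ attached to linear chord diagrams on $8$ vertices. The map $ES_6$ contracts the first tensor factor against one of the remaining factors using $\mu$ and then projects to the cyclic coinvariants $(H^{\otimes 6})_{\Z/6\Z}$. Composing with the multi-contraction $(12)(34)(56)$ or $(12)(35)(46)$ then closes everything into a collection of loops. Each loop contributes a factor $\pm 2g$, the trace of the identity under $\mu$ up to sign conventions. The result is therefore a polynomial in $g$ with integer coefficients, and I would compute it diagram by diagram with careful bookkeeping of signs and of the averaging over cyclic rotations.

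Most of this is already in \cite{mss4}, where $ES_6$ was shown to be surjective on $\mathfrak{h}_{g,1}(6)^{\mathrm{Sp}}$ and to be an isomorphism on $\mathfrak{j}_{g,1}(6)^{\mathrm{Sp}}$. That argument must have produced the $2\times 5$ matrix $\bigl(ES_6^a(v_i)\bigr)$, or at least its restriction to a basis of $\mathfrak{j}_{g,1}(6)^{\mathrm{Sp}}$. I would import that matrix and evaluate it on $j$.

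There are two consistency checks. First, the vector $\varepsilon$ of Theorem~\ref{th:main} must be killed by both rows of the matrix. Second, the two resulting polynomials in $g$ should share the common factor needed to leave exactly $2g-1$ as their quotient; I expect both to be divisible by something like $g(2g+1)$. I would also verify $g=2$ separately: there $v_5=0$, so the term $10v_5$ drops out, and the identification of Remark~\ref{rem:inv_stable} must be checked against the chord-diagram formulas. The final statement, that $ES_6^2(\sigma_3(g))\neq 0$ and the ratio equals $2g-1$, then follows for all $g\ge 2$.

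The main obstacle is the sign and normalization bookkeeping in the contractions. Any error in the matrix, or in how the $g$-dependence enters through the loop factors $2g$, would change the ratio. The $\varepsilon\mapsto 0$ check is what I would rely on to catch such errors before evaluating on $j$.
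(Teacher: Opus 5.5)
Your proposal is correct and follows the paper's proof: the paper also uses $ES_6(\varepsilon)=0$ to reduce the ratio to $ES_6^1(j)/ES_6^2(j)$, and then evaluates the matrix $ES_6^a(v_i)$ on $j=-9v_1-7v_2-19v_3-5v_4+10v_5$. That matrix is obtained as the $g$-independent values on the normalized basis $\bar{v}_i$ (Table~\ref{tab:ESnn}) multiplied by the eigenvalues in Table~\ref{tab:ev}. This yields $ES_6^1(j)=-270\cdot 2g(2g+1)(2g+2)(2g-1)$ and $ES_6^2(j)=-270\cdot 2g(2g+1)(2g+2)$, which confirms the common factor you anticipated and gives the quotient $2g-1$, including at $g=2$.
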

Now we briefly mention our method of proving the main result. 
Let $\mathfrak{sp}(H)$ denote the Lie algebra of $\mathrm{Sp}(H)$. 
It acts on $\mathfrak{h}_{g,1}$, and the semidirect product 
$\mathfrak{h}_{g,1}\rtimes \mathfrak{sp}(H)$ becomes a graded Lie algebra 
with $\mathfrak{sp}(H)$ as its degree $0$ part. 
The {\it normalizer} of the (extended) Johnson image $\Im \tau_{g,1}\rtimes \mathfrak{sp}(H)$ in 
$\mathfrak{h}_{g,1}\rtimes \mathfrak{sp}(H)$ is defined as 
$\mathcal{N}=\oplus_{k=1}^\infty \mathcal{N}(k)$ with 
\[\mathcal{N}(k)=\{\varphi\in \mathfrak{h}_{g,1}(k) \mid \text{$[\varphi,\psi]\in \Im \tau_{g,1}(k+l)$
for any $l=0,1,\ldots$ and $\psi\in \Im \tau_{g,1}(l)$}\},\]
where we set $\Im \tau_{g,1}(0)=\mathfrak{sp}(H)$.
Following Hain \cite{hain},
here we change the definition of the normalizer $\mathcal{N}$ given in 
our paper \cite{mss4}
by including a condition coming from the degree $0$ part $\mathfrak{sp}(H)$ of 
$\mathfrak{h}_{g,1}\rtimes \mathfrak{sp}(H)$. 
Then the quotient
$\mathcal{N}/ \Im\tau_{g,1}$ is a trivial $\mathrm{Sp}$-module.
This is because 
$\mathfrak{sp}(H) = [2]$ and 
\[[\mathfrak{sp}(H),\lambda]=
\begin{cases}
\lambda & (\lambda\not= [0])\\
\{0\} & (\lambda=[0]).
\end{cases}\]
for any irreducible summand $\lambda\subset \mathfrak{h}_{g,1}(k)$. 
This should be important in the study of the arithmetic mapping class group because
it is known that 
the Galois obstructions appear in $\mathfrak{h}_{g,1}^{\mathrm{Sp}}$
and they normalize the image of the Johnson homomorphism, 
that is, they are contained in $\mathcal{N}$.

We prove Theorem \ref{th:main} by using this property of $\mathcal{N}$. More 
precisely, we proved in \cite[Theorem 7.5]{mss4} that there exists a short exact sequence
\[0\longrightarrow \Im\tau_{g,1}(6)\longrightarrow \mathcal{N}(6)
\longrightarrow \Q \longrightarrow 0\]
for all $g\geq 3$, and it also holds for the case $g=2$ as is shown in this paper.
It follows that $\mathcal{N}(6)/\Im\tau_{g,1}(6) \cong \Q$ is generated by the first Galois 
obstruction $\hat{\tau}_{g,1}(6)(\sigma_3)$. The main ingredient of this paper
is the explicit  determination of $\mathcal{N}(6)$.

We conclude this section with an outline of the proof. 
In 
$\S$\ref{in}, keeping in mind the short exact sequence \eqref{eq:es}, 
we add an element $\varepsilon$, called the {\it extra element}, 
to a basis of $\Im\tau_{g,1}(6)^{\mathrm{Sp}}$ given in \cite{mss4} 
to obtain a basis of $\Ker ES_6$ (see Theorem \ref{th:extra}).

Next, in order to measure the failure of this extra element $\varepsilon$ 
to normalize the Johnson image, we analyze the $[1^3]$- and $[1]$-isotypical components 
of $\mathfrak{h}_{g,1}(7)$ in 
$\S$\ref{sec:1_13}. We show that
\[ [\Im \tau_{g,1}(1), \varepsilon]\not\subset \Im\tau_{g,1}(7).\]  
In 
$\S$\ref{sec:polyn}, we prove that the values of the various homomorphisms 
which we use in 
$\S$\ref{sec:1_13} are polynomials in $g$.
Then in 
$\S$\ref{sec:firstGal}, we construct an element 
$j\in \mathfrak{j}_{g,1}(6)^{\mathrm{Sp}}$ in terms of the basis 
\begin{align*}
j_1&=v_2-5v_3-4v_4+2v_5\\
j_2&=3v_1+3v_2+3v_3-v_4-2v_5
\end{align*}
of $\mathfrak{j}_{g,1}(6)^{\mathrm{Sp}}$ given in \cite{mss4},
such that 
a suitable linear combination of $\varepsilon$ and $j$
does normalize the Johnson image.
Therefore we can conclude that this element 
represents the first Galois obstruction $\sigma_3(g)$.


\section{Kernel of the Enomoto-Satoh homomorphism in degree $6$}\label{in}

As mentioned in Section \ref{sec:results}, 
$\dim \Ker  ES_6$ is equal to $3$ for $g \ge 3$ and to $2$ for $g=2$. 
We now construct a basis of $\Ker  ES_6$ in terms of the normalized basis 
$\bar{v}_i\ (i=1,2,3,4,5)$ of $\mathfrak{h}_{g,1}(6)^{\mathrm{Sp}}$ introduced in \cite{mss4}.

\begin{thm}
The following elements form a basis of $\Ker  ES_6$.

\noindent
$\mathrm{(i)}\ $ The case of $g\geq 3$
\begin{align*}
&\tau_1=3 \bar{v}_1-\bar{v}_2+8\bar{v}_3+5\bar{v}_5,\quad
\tau_2=6\bar{v}_1+3\bar{v}_2+36\bar{v}_3+25\bar{v}_4-25\bar{v}_5
\in \Im\tau_{g,1}(6)^{\mathrm{Sp}} \\ 
&\text{and} \ \ 2\bar{v}_1+8\bar{v}_3+3\bar{v}_4
\end{align*}
$\mathrm{(ii)}\ $ The case of $g=2$
$$
5\tau_1+\tau_2= 21\bar{v}_1-2\bar{v}_2+76\bar{v}_3+25\bar{v}_4
\in \Im\tau_{2,1}(6)^{\mathrm{Sp}} \quad\text{and}\quad
2\bar{v}_1+8\bar{v}_3+3\bar{v}_4
$$
\label{th:extra}
\end{thm}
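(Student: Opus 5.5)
The proof reduces to linear algebra in $\mathfrak{h}_{g,1}(6)^{\mathrm{Sp}}$, using the normalized basis $\bar v_1,\dots,\bar v_5$ from \cite{mss4}. We take over three facts from \cite[Section 7]{mss4}:
\begin{itemize}
\item the explicit values of the two multi-contractions $ES_6^1$ and $ES_6^2$ on each $\bar v_i$;
\item the statement that $\tau_1$ and $\tau_2$ are Johnson images, i.e.\ they are (normalized) brackets of lower-degree Johnson images, hence lie in $\Im\tau_{g,1}(6)^{\mathrm{Sp}}$;
\item the dimension counts $\dim\Ker ES_6=3$ for $g\ge 3$ and $\dim\Ker ES_6=2$ for $g=2$.
\end{itemize}
Given these, a basis of $\Ker ES_6$ is just the right number of linearly independent vectors in the kernel.

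\textbf{Step 1: the three vectors lie in the kernel.} Since $ES_6$ vanishes on the Johnson image by Enomoto-Satoh, $\tau_1,\tau_2\in\Ker ES_6$ automatically. For $u=2\bar v_1+8\bar v_3+3\bar v_4$ we compute directly:
\[ES_6^i(u)=2ES_6^i(\bar v_1)+8ES_6^i(\bar v_3)+3ES_6^i(\bar v_4)\qquad (i=1,2),\]
and check that both values vanish. The normalization of the $\bar v_i$ (dividing $v_i$ by the genus-dependent factors visible in the formula for $\varepsilon$ in Theorem \ref{th:main}) is presumably chosen so that these contraction values become $g$-independent or uniformly proportional. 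This makes the cancellation a polynomial identity in $g$, which we verify as such.

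\textbf{Step 2: linear independence for $g\ge3$.} Write $\tau_1,\tau_2,u$ as rows of a $3\times5$ coefficient matrix in the basis $\bar v_i$ and exhibit a nonzero $3\times3$ minor. For instance, the columns $\bar v_1,\bar v_4,\bar v_5$ give
\[\det\begin{pmatrix}3&0&5\\6&25&-25\\2&3&0\end{pmatrix}=3(75)-0+5(18-50)=225-160=65\neq0.\]
Since $\dim\Ker ES_6=3$, the three vectors form a basis.

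\textbf{Step 3: the case $g=2$.} Here $\bar v_5=0$, so $\tau_1$ and $\tau_2$ lose their $\bar v_5$-terms. The image $\Im\tau_{2,1}(6)^{\mathrm{Sp}}$ is only one-dimensional, so $\tau_1$ and $\tau_2$ must become proportional, or one of them vanishes modulo the relations. Eliminating $\bar v_5$ via $5\tau_1+\tau_2$ yields
\[5\tau_1+\tau_2=21\bar v_1-2\bar v_2+76\bar v_3+25\bar v_4,\]
which is the image of this combination under the specialization. This vector is nonzero, and it is independent of $u$ because it has a nonzero $\bar v_2$-coefficient while $u$ has none. Since $\dim\Ker ES_6=2$ for $g=2$, the two vectors form a basis.

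\textbf{Main obstacle.} The most delicate point is justifying the $g=2$ case. Specializing the basis identification of Remark \ref{rem:inv_stable} ($v_5=0$), we must confirm that $5\tau_1+\tau_2$ really lies in $\Im\tau_{2,1}(6)$. I would do this by recomputing the relevant bracket of lower-degree Johnson images directly at $g=2$, rather than relying on the stable formulas. The $ES$-vanishing of $u$ must also be checked at $g=2$ separately, because the normalization factors could degenerate there.
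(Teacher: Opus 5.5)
Your proposal is correct and is essentially the paper's argument: the values of $ES_6^1,ES_6^2$ on the normalized basis $\bar v_i$ are independent of $g$, so $2\bar v_1+8\bar v_3+3\bar v_4$ is killed by both ($2\cdot 6+8\cdot 3-3\cdot 12=0$ and $2\cdot 6-8\cdot 3+3\cdot 4=0$), and independence from the Johnson-image elements plus the dimension counts of \cite{mss4} finish the proof, while the membership $5\tau_1+\tau_2\in\Im\tau_{2,1}(6)$ is, in the paper too, only obtained by redoing the \cite{mss4} construction at $g=2$ (where $[1^3]=0$). One correction: at $g=2$ the truncations $3\bar v_1-\bar v_2+8\bar v_3$ and $6\bar v_1+3\bar v_2+36\bar v_3+25\bar v_4$ are neither proportional nor zero, so drop that remark; you do not need it for the basis claim, because $ES_6(5\tau_1+\tau_2)=0$ can be checked directly from the same table, and only the normalizing factor of $\bar v_5$ degenerates at $g=2$.
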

\begin{proof}
Table \ref{tab:ES} indicates the values of the Enomoto-Satoh trace maps
$ES_6=\left(\begin{smallmatrix}ES_6^1\\ES_6^2\end{smallmatrix}\right)$ 
on each element $v_i\ (i=1,2,3,4,5)$ of the basis
for genus $1\leq g\leq 7$.
Dividing these values by the corresponding eigenvalues given in Table \ref{tab:ev},
we obtain the values on each element $\bar{v}_i\ (i=1,2,3,4,5)$ of the normalized basis
in Table \ref{tab:ESnn} which do not depend on $g$.
This can be explained as follows. 
Each component $ES_6^1, ES_6^2$ of the Enomoto-Satoh trace map
$ES_6$ is a linear combination of various multi-contractions and hence, for each $g$,
the value of each element $v_i\ (i=1,2,3,4,5)$ is a constant multiple of 
the corresponding eigenvalue. Since the normalized vector $\bar{v}_i$ is
defined to be $v_i$ divided by the corresponding eigenvalue, the values of
$ES_6^1, ES_6^2$ on $\bar{v}_i$ are independent of $g$.

It was proved in \cite{mss4} that the former two elements in the theorem form a basis
of $\Im \tau_{g,1}(6)^{\mathrm{Sp}}\cong\Q^2$ for $g\geq 3$.
The case $g=2$ is shown similarly. We recall that, in this case, 
$\mathfrak{h}_{2,1}(1)\cong [1]$ and there is no $[1^3]$-summand.
On the other hand, it can be seen, from the values in Table \ref{tab:ESnn}, 
that the Enomoto-Satoh trace map vanishes on the last element. In fact, we have
\begin{align*}
ES_6^1(2\bar{v}_1+8\bar{v}_3+3\bar{v}_4)&=2 \cdot  6+8 \cdot 3+3\cdot (-12) =0\\
ES_6^2(2\bar{v}_1+8\bar{v}_3+3\bar{v}_4)&=2\cdot 6+8\cdot (-3)+3\cdot 4 =0.
\end{align*}
Also it is easy to see that this element is not a linear combination of the preceding 
elements. Hence these three elements, or these two elements when $g=2$, 
form a basis of $\Ker  ES_6$ as required.
\end{proof}

Henceforth, we call the last element 
$\varepsilon:=2\bar{v}_1+8\bar{v}_3+3\bar{v}_4$ the {\it extra} element
(of $\Ker  ES_6$).

\begin{table}[h]
\caption{$\text{Values of the Enomoto-Satoh trace maps $ES_6^i$ on the basis}$}
\begin{center}
\begin{tabular}{|c|r|r|r|r|r|}
\noalign{\hrule height0.8pt}
\hfil $g$ & $v_1$ 
& $v_2$ & $v_3$  & $v_4$
& $v_5$  \\
\hline
$1$ & $720$ & $0$ & $0$ & $0$ & $0$ \\  
{} & $720$ & $0$ & $0$ & $0$ & $0$ \\     
\hline
$2$ & $5040$ & $ 6480$ & $720$ & $-1440$ & $0$ \\  
{} & $5040$ & $-240$ & $-720$ & $480$ & $0$ \\     
\hline
$3$ & $18144$ & $36288$ & $4032$ & $-10080$ & $-1008$ \\  
{} & $18144$ & $-1344$ & $-4032$ & $3360$ & $336$ \\     
\hline
$4$ & $47520$ & $116640$ & $12960$ & $-36288$ & $-5184$ \\  
{} & $47520$ & $-4320$ & $-12960$ & $12096$ & $1728$ \\  
\hline
$5$ & $102960$ & $285120$ & $31680$ & $-95040$ & $ -15840$ \\  
{} & $102960$ & $-10560$ & $-31680$ & $31680$ & $5280$ \\    
\hline
$6$ & $196560$ & $589680$ & $65520$ & $ -205920$ & $ -37440$ \\  
{} & $196560$ & $-21840$ & $-65520$ & $68640$ & $12480$ \\     
\hline
$7$ & $342720$ & $ 1088640$ & $120960$ & $-393120$ & $-75600$ \\  
{} & $342720$ & $-40320$ & $-120960$ & $131040$ & $25200$ \\                                      
\noalign{\hrule height0.8pt}
\end{tabular}
\end{center}
\label{tab:ES}
\end{table}

\begin{table}[h]
\caption{$\text{Orthogonal decomposition of $\mathfrak{h}_{g,1}(6)^{\mathrm{Sp}}$\  }$}
\begin{center}
\begin{tabular}{|c|c|c|c|}
\noalign{\hrule height0.8pt}
\hfil $\lambda$ & $\mu_{\lambda'}\ \text{(eigen value of}\ H_{\lambda})$ 
& \text{$\dim\, H_{\lambda}$}  & \text{generators for $H_\lambda$} \\
\hline
$[4]$ & $2g(2g+1)(2g+2)(2g+3)$ & $1$ & $v_1$ \\
\hline
$[31]$ & $(2g-2)2g(2g+1)(2g+2)$ & $2$ & $v_2, v_3$ \\
\hline
$[2^2]$ & $(2g-2)(2g-1)2g(2g+1)$ & $1$ & $v_4$ \\
\hline
$[21^2]$ & $(2g-4)(2g-2)2g(2g+1)$ & $1$ & $v_5$ \\                                                          
\hline 
$\text{total}$ & {}   & $5$ & {} \\
\noalign{\hrule height0.8pt}
\end{tabular}
\end{center}
\label{tab:ev}
\end{table}

\begin{table}[h]
\caption{$\text{Values of the Enomoto-Satoh trace maps $ES_6^i$ on the normalized basis}$}
\begin{center}
\begin{tabular}{|r|r|r|r|r|r|}
\noalign{\hrule height0.8pt}
\hfil ${}$ & $\bar{v}_1$ 
& $\bar{v}_2$ & $\bar{v}_3$  & $\bar{v}_4$
& $\bar{v}_5;\ g\geq 3, g=2$  \\
\hline
$ES_6^1$ & $6$ & $27$ & $3$ & $-12$ & $-3,\hspace{10mm} 0$ \\
\hline
$ES_6^2$ & $6$ & $-1$ & $-3$ & $4$ &  $1,\hspace{10mm} 0$ \\                                                       
\noalign{\hrule height0.8pt}
\end{tabular}
\end{center}
\label{tab:ESnn}
\end{table}

\section{Structure of the $[1^3]$- and $[1]$-isotypical components $\mathfrak{h}_{g,1}(7)_{[1^3]}$}\label{sec:1_13}

Recall from Johnson's work \cite{j} that 
\[\Im\tau_{g,1}(1)=\mathfrak{h}_{g,1}(1)=\wedge^3 H\]
and the $\mathrm{Sp}$-irreducible decomposition of this module is given by 
$[1^3]\oplus [1]$. Fix a symplectic basis $\{x_1, y_1, \ldots, x_g, y_g\}$. Then 
\begin{align*}
x_1\wedge x_2\wedge x_3&=
x_1\otimes x_2\otimes x_3+x_2\otimes x_3\otimes x_1+x_3\otimes x_1\otimes x_2\\
&-x_1\otimes x_3\otimes x_2-x_2\otimes x_1\otimes x_3-x_3\otimes x_2\otimes x_1
\in \wedge^3 H \subset H^{\otimes 3}
\end{align*}
is a highest weight vector of $[1^3]\subset \mathfrak{h}_{g,1}(1)\subset H^{\otimes 3}$ and 
\[x_1\wedge \omega_0
=x_1\wedge (x_2\wedge y_2+\cdots + x_g\wedge y_g)
\in \wedge^3 H\]
is a highest weight vector of $[1]\subset \mathfrak{h}_{g,1}(1)$.

Now 
we analyze the $[1^3]$- and $[1]$- isotypical components of 
$\mathfrak{h}_{g,1}(7)$
in order to analyze the failure of the extra element $\varepsilon \in \Ker  ES_6$ constructed in the previous section 
to normalize the Johnson image: 
\[[\Im \tau_{g,1}(1), \varepsilon]\not\subset \Im\tau_{g,1}(7).\] 

We showed in \cite{mss4} that
\begin{align*}
\mathfrak{h}_{g,1}(7)&= 10 [1]\oplus 15 [1^3]\oplus \text{other terms},\\
\mathfrak{j}_{g,1}(7)&=3 [1]\oplus 3 [1^3]\oplus \text{other terms}
\end{align*}
and constructed various elements in $\Im \tau_{g,1}(7)$ 
as well as detectors for the $[1^3]$- and $[1]$-isotypical components of $\mathfrak{h}_{g,1}(7)$. 
Then by computing the values of the latter on the former elements
explicitly, we found that
$$
6 [1]\oplus 12 [1^3]\subset \Im \tau_{g,1}(7).
$$ 
Since we know that $\Im \tau_{g,1}(7)$ embeds in the 
quotient $\mathfrak{h}_{g,1}(7)/\mathfrak{j}_{g,1}(7)$, we can conclude that
\begin{align*}
&\text{the $[1^3]$-isotypical
component of $\Im \tau_{g,1}(7)$ is $12 [1^3]$}\\
&\text{the $[1]$-isotypical
component of $\Im \tau_{g,1}(7)$ is $\text{$6 [1]$ or $7 [1]$}$}
\end{align*}
Next we consider the Enomoto-Satoh trace map
\[ES_7 \colon \mathfrak{h}_{g,1}(7)\rightarrow \mathfrak{a}_g(5)=15 [1]\oplus 15 [1^3]
\oplus \text{other terms}.\]
We find that 
$$
\Im ES_7\cong3 [1]\oplus 2 [1^3]
$$
and the following five homomorphisms
\begin{align*}
W_1&=1\wedge 2\wedge 4\ (3 6)(5 7),\ W_2=1\wedge 2\wedge 4\ (3 7)(5 6),\\
F_1&=(13)(24)(56)\ 7,\ F_2=(13)(25)(46)\ 7,\ F_3=(13)(26)(45)\ 7
\end{align*}
detect them. Here $W_1$ and $F_1$, for example, are defined by
\begin{align*}
W_1(u_1\otimes\cdots\otimes u_7)=& (u_3\cdot u_6)(u_5\cdot u_7) u_1\wedge u_2\wedge u_4,\\
F_1(u_1\otimes\cdots\otimes u_7)=& (u_1\cdot u_3)(u_2\cdot u_5)(u_4\cdot u_6) u_7
\end{align*}
for $u_1\otimes\cdots\otimes u_7\in \mathfrak{a}_g(5)=(H^{\otimes 7})_{\Z/7\Z} \cong 
(H^{\otimes 7})^{\Z/7\Z} \subset H^{\otimes 7}$.
As mentioned above, $\mathfrak{j}_{g,1}(7)$ contains three copies of each of $[1]$ and $[1^3]$.
As for $[1]$, it turns out that $ES_7$ detects only two of the three copies of 
$[1]$ in $\mathfrak{j}_{g,1}(7)$. 
As for $[1^3]$,
it detects only two of the three copies of $[1^3]$ in $\mathfrak{j}_{g,1}(7)$. 
We now conclude that 
\[\text{the $[1]$-isotypical component of $\Im \tau_{g,1}(7)$ is $6 [1]$}\]
and we obtain the exact sequence
\[0\longrightarrow(\Im \tau_{g,1}(7))_1\cong 6 [1]\oplus 12 [1^3]
\longrightarrow (\Ker ES_7)_1 \longrightarrow [1]\oplus  [1^3] \longrightarrow 0,\]
where $(\cdot)_1$ denotes the $([1]\oplus [1^3])$-isotypical components.
Namely, the kernel $\Ker ES_7$ of the Enomoto-Satoh trace map in degree
$7$ contains contains one additional copy of $[1]$ and one additional copy of $[1^3]$ 
beyond those contained in the Johnson image.
Furthermore, these additional components are contained in $\mathfrak{j}_{g,1}(7)$.

It follows that there exist unique (up to nonzero scalar multiples) 
$\mathrm{Sp}$-homomorphisms 
\[R \colon \Ker ES_7\longrightarrow [1],\quad S \colon \Ker ES_7\longrightarrow [1^3]\]
which detect the above additional components, namely they vanish on the Johnson image and restrict to 
isomorphisms on the additional components.

\begin{prop}
The $\mathrm{Sp}$-homomorphisms $R$ and $S$ above extended to the whole of 
$H^{\otimes 9} \supset \mathfrak{h}_{g,1}(7)\supset \Ker ES_7$ as follows.
\begin{align*}
R(u_1\otimes\cdots\otimes u_{9})=&-2 (u_1\cdot u_2)(u_3\cdot u_5)(u_4\cdot u_7)(u_6\cdot u_8)\ u_9\\
&+2 (u_1\cdot u_2)(u_3\cdot u_6)(u_4\cdot u_7)(u_5\cdot u_8)\ u_9\\
&+2 (u_1\cdot u_2)(u_3\cdot u_6)(u_4\cdot u_8)(u_5\cdot u_7)\ u_9\\
&+2 (u_1\cdot u_2)(u_3\cdot u_7)(u_4\cdot u_6)(u_5\cdot u_8)\ u_9\\
&+ (u_1\cdot u_3)(u_2\cdot u_6)(u_4\cdot u_7)(u_5\cdot u_8)\ u_9\\
S(u_1\otimes\cdots\otimes u_{9})=&(u_2\cdot u_3)(u_6\cdot u_8)(u_7\cdot u_9)\ u_1\wedge u_4\wedge u_5\\
&-(u_2\cdot u_3)(u_4\cdot u_5)(u_6\cdot u_8)\ u_1\wedge u_7\wedge u_9\\
&+(u_4\cdot u_5)(u_6\cdot u_9)(u_7\cdot u_8)\ u_1\wedge u_2\wedge u_3\\
&-(u_2\cdot u_3)(u_4\cdot u_5)(u_6\cdot u_9)\ u_1\wedge u_7\wedge u_8\\
&+3(u_2\cdot u_3)(u_5\cdot u_7)(u_8\cdot u_9)\ u_1\wedge u_4\wedge u_6\\
&-4(u_2\cdot u_3)(u_4\cdot u_6)(u_5\cdot u_8)\ u_1\wedge u_7\wedge u_9\\
&-(u_2\cdot u_3)(u_4\cdot u_7)(u_5\cdot u_8)\ u_1\wedge u_6\wedge u_9\\
&+(u_2\cdot u_4)(u_3\cdot u_7)(u_5\cdot u_8)\ u_1\wedge u_6\wedge u_9
\end{align*}
\label{prop:rs}
\end{prop}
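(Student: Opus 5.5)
To prove Proposition~\ref{prop:rs} I will show that the two displayed maps $R$ and $S$, which are manifestly $\mathrm{Sp}$-equivariant on $H^{\otimes 9}$ (each is a sum of multi-contractions with a wedge or identity factor), have the two properties that characterize the maps of the statement. First, when restricted to $\mathfrak{h}_{g,1}(7)$, they vanish on $\Im\tau_{g,1}(7)$. Second, they are nonzero on the additional copies of $[1]$ and $[1^3]$ in $\Ker ES_7$. By the uniqueness (up to scalar) established just before the proposition, this suffices.

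\textbf{Step 1: vanishing on the Johnson image.} For this I would use the explicit generators of the $[1]$- and $[1^3]$-isotypical parts of $\Im\tau_{g,1}(7)$ constructed in \cite{mss4}, namely the $6[1]\oplus 12[1^3]$ found above. These are iterated brackets of elements of $\Im\tau_{g,1}(1)=\wedge^3H$ and $\Im\tau_{g,1}(2)$ (and so on), written as explicit tensors in $H^{\otimes 9}$. It suffices to evaluate $R$ on highest weight vectors of each $[1]$-copy, for instance on vectors of weight $x_1$ built using $x_1\wedge\omega_0$. Similarly, it suffices to evaluate $S$ on highest weight vectors of weight $x_1\wedge x_2\wedge x_3$ in each $[1^3]$-copy, checking that all values are $0$. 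Since $\mathrm{Hom}_{\mathrm{Sp}}$ of an irreducible module is detected on a highest weight vector, this reduces the check to finitely many contraction computations.

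\textbf{Step 2: nonvanishing on the extra components.} By the discussion above, the extra $[1]$ and $[1^3]$ in $\Ker ES_7$ lie in $\mathfrak{j}_{g,1}(7)$. Specifically, they lie in the third copy that is not detected by $ES_7$, i.e.\ the part of $\mathfrak{j}_{g,1}(7)_1$ killed by $W_1,W_2,F_1,F_2,F_3$ composed with $ES_7$. So I will take the explicit generators of $3[1]\oplus3[1^3]\subset\mathfrak{j}_{g,1}(7)$ from \cite{mss4} and solve the linear condition that $W_i\circ ES_7$ and $F_j\circ ES_7$ vanish to get a vector $\eta$. I then check that $R(\eta)\neq0$ and $S(\eta)\neq 0$. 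Together with Step~1, and the fact that $(\Ker ES_7)_1/(\Im\tau_{g,1}(7))_1\cong[1]\oplus[1^3]$, this shows that $R$ and $S$ induce isomorphisms on the extra components and hence coincide with the maps of the statement up to scalar.

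\textbf{Main obstacle: genus dependence.} Contraction values are polynomials in $g$, and Step~1 must hold for every $g\ge2$, including the low genus $g=2,3$ where the chord-diagram descriptions degenerate. I would handle this by computing the values symbolically, or for enough values of $g$ to pin down the polynomials whose degree is bounded by the number of closed loops, in the spirit of Section~\ref{sec:polyn}. I would then verify identical vanishing, and check separately that the leading coefficient of $R(\eta)$ and $S(\eta)$ is nonzero for all $g\ge2$.

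Finding the specific coefficients $-2,2,2,2,1$ and $1,\dots,-4,1$ is itself a linear-algebra problem: write a general combination of the finitely many $\mathrm{Sp}$-maps $H^{\otimes9}\to[1]$ (resp.\ $[1^3]$) given by contraction patterns, and impose vanishing on the Johnson generators. The proposition only requires verifying the stated solution, so I would not redo that search.
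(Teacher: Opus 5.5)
Your proposal is correct and is essentially the paper's own sketched argument: the paper likewise uses the explicit generators of the $[1]$- and $[1^3]$-isotypical parts of $\Im\tau_{g,1}(7)$ together with explicit values of $ES_7$, and relies on the uniqueness up to scalar of $R$ and $S$; the only difference is that the paper solves for the contraction coefficients, whereas you verify the given ones. One small correction to your genus discussion: a nonzero leading coefficient only shows that $R(\eta)$ and $S(\eta)$ are nonzero for all but finitely many $g$, so for every $g$ you must check that they have no integer roots in the relevant range (and note that $S$ only makes sense for $g\ge 3$, since $[1^3]=0$ when $g=2$).
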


\begin{proof}[Sketch of Proof]
By looking at the elements of the $([1]\oplus [1^3])$-isotypical components
of $\Im \tau_{g,1}(7)$ and computing as well the explicit values of the map $ES_7$ on $\mathfrak{h}_{g,1}(7)$,
we obtain the formulae for the homomorphisms $R, S$ with the required properties.
\end{proof}

We summarize the above discussion of the structure of the $([1]\oplus [1^3])$-isotypical 
components of $\mathfrak{h}_{g,1}(7)$ as follows.

\begin{prop}[Structure of $\mathfrak h_{g,1}(7)_1$]
We have the following exact sequences.
$$
0\longrightarrow(\Ker ES_7)_1\cong 7 [1]\oplus 13 [1^3] \longrightarrow 
\mathfrak{h}_{g,1}(7)_1 \xrightarrow{\mathrm{ES}_7} 3 [1]\oplus  2[1^3] \longrightarrow 0,
$$
$$
0\longrightarrow(\mathrm{Im}\,\tau_{g,1}(7))_1\cong 6 [1]\oplus 12 [1^3]
\longrightarrow
(\Ker ES_7)_1 \xrightarrow{(R,S)} [1]\oplus  [1^3]
\longrightarrow 0, 
$$
$$
0\longrightarrow \mathfrak{j}_{g,1}(7)_1\cap (\Ker ES_7)_1\cong  [1]\oplus  [1^3]
\longrightarrow
\mathfrak{j}_{g,1}(7)_1 \xrightarrow{\mathrm{ES}_7} 2 [1]\oplus  2[1^3]
\longrightarrow 0.
$$
\label{prop:str1}
\end{prop}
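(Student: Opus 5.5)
The plan is to assemble the three exact sequences of Proposition \ref{prop:str1} from the multiplicity counts collected in this section. All statements concern only the $[1]$- and $[1^3]$-isotypical parts, so by Schur's lemma everything reduces to linear algebra on multiplicity spaces. We use the decompositions $\mathfrak{h}_{g,1}(7)_1=10[1]\oplus 15[1^3]$ and $\mathfrak{j}_{g,1}(7)_1=3[1]\oplus 3[1^3]$ from \cite{mss4}. We also use the fact that $\Im ES_7$, restricted to these components, is $3[1]\oplus 2[1^3]$; this is detected by $W_1,W_2,F_1,F_2,F_3$, whose values are computed on explicit elements.

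For the first sequence, $ES_7$ restricted to $\mathfrak{h}_{g,1}(7)_1$ surjects onto $3[1]\oplus 2[1^3]$ by the detector computation. Its kernel therefore has multiplicities $10-3=7$ and $15-2=13$, giving $(\Ker ES_7)_1\cong 7[1]\oplus 13[1^3]$.

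For the third sequence, we use the computation that $ES_7$ detects exactly two of the three copies of $[1]$ and two of the three copies of $[1^3]$ in $\mathfrak{j}_{g,1}(7)$. Concretely, one evaluates $W_i,F_i$ on explicit highest weight vectors spanning the multiplicity spaces of $\mathfrak{j}_{g,1}(7)_1$ and checks that the resulting matrices have rank $2$. It follows that $ES_7(\mathfrak{j}_{g,1}(7)_1)\cong 2[1]\oplus 2[1^3]$ and that $\mathfrak{j}_{g,1}(7)_1\cap(\Ker ES_7)_1\cong [1]\oplus[1^3]$.

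For the second sequence, we know $6[1]\oplus 12[1^3]\subset \Im\tau_{g,1}(7)$, and that $\Im\tau_{g,1}(7)\cap \mathfrak{j}_{g,1}(7)=0$ by \cite[Proposition 3.5]{mss4}. The projection to $\mathfrak{h}_{g,*}(7)_1\cong 7[1]\oplus 12[1^3]$ is injective on the Johnson image. This forces the $[1^3]$-part of $\Im\tau_{g,1}(7)$ to be exactly $12[1^3]$, and the $[1]$-part to be $6[1]$ or $7[1]$.

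To exclude $7[1]$, note that $\Im\tau_{g,1}(7)\subset \Ker ES_7$ by Enomoto–Satoh. Moreover, $(\Ker ES_7)_1$ contains the copy of $[1]\oplus[1^3]$ lying in $\mathfrak{j}_{g,1}(7)$, which meets the Johnson image trivially. Hence the Johnson image has multiplicity at most $7-1=6$ for $[1]$ (and at most $13-1=12$ for $[1^3]$, consistent with the above). So $(\Im\tau_{g,1}(7))_1\cong 6[1]\oplus12[1^3]$, and the quotient $(\Ker ES_7)_1/(\Im\tau_{g,1}(7))_1\cong[1]\oplus[1^3]$.

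This quotient is realized by $\mathfrak{j}_{g,1}(7)_1\cap\Ker ES_7$. Since $(R,S)$ vanishes on the Johnson image and is nonzero on that complement, $(R,S)$ is surjective with kernel exactly the Johnson image. The nonvanishing is checked by evaluating the formulas of Proposition \ref{prop:rs} on the highest weight vectors of the extra components.

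The main obstacle is the rank computations. These are the ranks of the detector matrices on $\mathfrak{j}_{g,1}(7)_1$ and the nonvanishing of $(R,S)$ on the extra components, and they must hold for all $g\ge2$, not just for small $g$. I would first compute them for several values of $g$ by machine. I would then invoke the polynomial-in-$g$ principle (promised in \S\ref{sec:polyn}) to conclude that the determinants are nonzero polynomials in $g$, checking their roots separately for small genus such as $g=2$, where some representations degenerate.
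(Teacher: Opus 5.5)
Your proposal is correct and follows essentially the same route as the paper's discussion preceding the proposition. You use the multiplicities $10[1]\oplus15[1^3]$ and $3[1]\oplus3[1^3]$ from \cite{mss4}, the computed inclusion $6[1]\oplus12[1^3]\subset\Im\tau_{g,1}(7)$, and the injectivity of the Johnson image into $\mathfrak{h}_{g,1}(7)/\mathfrak{j}_{g,1}(7)$; you then use the fact that $ES_7$ misses exactly one copy each of $[1]$ and $[1^3]$ in $\mathfrak{j}_{g,1}(7)$ to pin the $[1]$-multiplicity of the Johnson image at $6$. Your closing uniformity-in-$g$ plan goes beyond what the paper does (it simply relies on the machine computations), and the polynomiality of \S\ref{sec:polyn} is proved only for $r,s,f_i,w_i$ on $\mathfrak{h}_{g,1}(6)^{\mathrm{Sp}}$, so that step would need its own analogous argument for the detector ranks on $\mathfrak{j}_{g,1}(7)_1$.
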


Now the Galois obstruction $\sigma_3(g)\in \mathfrak{h}_{g,1}(6)^{\mathrm{Sp}}$ 
normalizes the Johnson
image:
$$
[\mathrm{Im} \tau_{g,1}(1), \sigma_3(g)]\subset \Im \tau_{g,1}(7).
$$ 
To find such an element, we use the description of $\mathfrak{h}_{g,1}(7)_1$ 
given in Proposition \ref{prop:str1} and make the following definition.

\begin{definition}\label{def:homo}
We define homomorphisms 
\[w_1,w_2, f_1, f_2, f_3, r, s \colon \mathfrak{h}_{g,1}(6)^{\mathrm{Sp}}\longrightarrow \Q\]
as follows.
\begin{align*}
w_i(v)&=\text{coefficient of $x_1\wedge x_2\wedge x_3$ of}\ W_i([x_1
\wedge x_2\wedge x_3, v]) \quad (i=1,2), \\
f_i(v)&=\text{coefficient of $x_1$ of}\ F_i([x_1
\wedge x_2\wedge y_2, v]) \quad (i=1,2,3),\\
r(v)&=\text{coefficient of $x_1$ of}\ R([x_1
\wedge x_2\wedge y_2, v]), \\
s(v)&=\text{coefficient of $x_1\wedge x_2\wedge x_3$ of}\ S([x_1
\wedge x_2\wedge x_3, v]). 
\end{align*}
\end{definition}

\begin{remark}\label{remark:1}
It would be more natural to replace $r$ by
\[\tilde{r}(v)=\text{coefficient of $x_1$ of}\ R([x_1 \wedge \omega_0, v])\]
because $x_1\wedge \omega_0$ is the highest weight vector of 
the irreducible component $[1]\subset \mathfrak{h}_{g,1}(1)$.
However, it can be seen that
\[\tilde{r}(v)=(g-1) r(v)\]
for any $v$, so that $r$ has the same information as $\tilde{r}$.
This is because the symplectic transformation induced by interchanging $x_2, y_2$
and $x_k,y_k$ induces an isomorphism of the Lie algebra $\mathfrak{h}_{g,1}$
for any $k=3,4,\ldots,g$. Hence, in the above definition of $r$, even if we replace 
$x_1\wedge x_2\wedge y_2$ by $x_1\wedge x_k\wedge y_k$, we obtain the same function.
Since
$$
x_1\wedge \omega_0=\sum_{k=2}^g x_1\wedge x_k\wedge y_k,
$$
we have the required equality $\tilde{r}=(g-1) r$. We use $r$ rather than $\tilde{r}$ because
it is easier to compute the former than the latter.
\end{remark}

Using Mathematica, we computed the values of the above seven homomorphisms 
$w_i \ (i=1,2), f_i\ (i=1,2,3), r$ and $s$ 
for the cases of genus $g=3,4,5,6,7$. 
The results are described in Tables \ref{tab:g3}--\ref{tab:g7}.

\begin{table}[h]
\caption{$\text{Detecting various $[1^3]$-, $[1]$-summands in $[\mathfrak{h}_{3,1}(1),v_i]$}$}
\begin{center}
\begin{tabular}{|c|c|c|c|c|c|c|c|}
\noalign{\hrule height0.8pt}
\hfil $g=3$ & $w_1$ 
& $w_2$ & $f_1$  & $f_2$
& $f_3$ & $r$ & $s$ \\
\hline
$v_1$ & $0$ & $2160\cdot 6$ & $ -6048$ & $0$ & $0$ & $-6048$   & $0$\\
\hline
$v_2$ & $0$ & $-1280\cdot 6$ & $3584$ & $0$ &  $0$ & $9072$ & $3136\cdot 6$\\
\hline
$v_3$ & $0$ & $ -720\cdot 6$ & $2016$ & $0$ &  $0$ & $ -1008$ & $-1728\cdot 6$\\
\hline
$v_4$ & $0$ & $800\cdot 6$ & $-2240$ & $0$ &  $0$ & $3780$ & $3440\cdot 6$\\
\hline
$v_5$ & $0$ & $ 80\cdot 6$ & $-224$  & $0$ &  $0$ & $1260$ & $848\cdot 6$\\                                                               
\noalign{\hrule height0.8pt}
\end{tabular}
\end{center}
\label{tab:g3}
\end{table}

\begin{table}[h]
\caption{$\text{Detecting various $[1^3]$-, $[1]$-summands in $[\mathfrak{h}_{4,1}(1),v_i]$}$}
\begin{center}
\begin{tabular}{|c|c|c|c|c|c|c|c|}
\noalign{\hrule height0.8pt}
\hfil $g=4$ & $w_1$ 
& $w_2$ & $f_1$  & $f_2$
& $f_3$ & $r$ & $s$ \\
\hline
$v_1$ & $0$ & $4620\cdot 6$ & $ -11880$ & $0$ & $0$ & $-11880$   & $0$\\
\hline
$v_2$ & $0$ & $-2380\cdot 6$ & $6120$ & $0$ &  $0$ & $18900$ & $7100\cdot 6$\\
\hline
$v_3$ & $0$ & $-1680\cdot 6$ & $4320$ & $0$ &  $0$ & $ -1620$ & $-3300\cdot 6$\\
\hline
$v_4$ & $0$ & $1960\cdot 6$ & $-5040$ & $0$ &  $0$ & $8316$ & $7420\cdot 6$\\
\hline
$v_5$ & $0$ & $ 280\cdot 6$ & $-720$  & $0$ &  $0$ & $4104$ & $2680\cdot 6$\\                                                               
\noalign{\hrule height0.8pt}
\end{tabular}
\end{center}
\label{tab:g4}
\end{table}

\begin{table}[h]
\caption{$\text{Detecting various $[1^3]$-, $[1]$-summands in $[\mathfrak{h}_{5,1}(1),v_i]$}$}
\begin{center}
\begin{tabular}{|c|c|c|c|c|c|c|c|}
\noalign{\hrule height0.8pt}
\hfil $g=5$ & $w_1$ 
& $w_2$ & $f_1$  & $f_2$
& $f_3$ & $r$ & $s$ \\
\hline
$v_1$ & $0$ & $8424\cdot 6$ & $ -20592$ & $0$ & $0$ & $-20592$   & $0$\\
\hline
$v_2$ & $0$ & $-3888\cdot 6$ & $9504$ & $0$ &  $0$ & $34056$ & $13392\cdot 6$\\
\hline
$v_3$ & $0$ & $-3240\cdot 6$ & $7920$ & $0$ &  $0$ & $-2376$ & $-5616\cdot 6$\\
\hline
$v_4$ & $0$ & $3888\cdot 6$ & $-9504$ & $0$ &  $0$ & $15444$ & $13608\cdot 6$\\
\hline
$v_5$ & $0$ & $ 648\cdot 6$ & $ -1584$  & $0$ &  $0$ & $9108$ & $5832\cdot 6$\\                                                               
\noalign{\hrule height0.8pt}
\end{tabular}
\end{center}
\label{tab:g5}
\end{table}

\begin{table}[h]
\caption{$\text{Detecting various $[1^3]$-, $[1]$-summands in $[\mathfrak{h}_{6,1}(1),v_i]$}$}
\begin{center}
\begin{tabular}{|c|c|c|c|c|c|c|c|}
\noalign{\hrule height0.8pt}
\hfil $g=6$ & $w_1$ 
& $w_2$ & $f_1$  & $f_2$
& $f_3$ & $r$ & $s$ \\
\hline
$v_1$ & $0$ & $13860\cdot 6$ & $-32760$ & $0$ & $0$ & $ -32760$   & $0$\\
\hline
$v_2$ & $0$ & $-5852\cdot 6$ & $13832$ & $0$ &  $0$ & $55692$ & $22540\cdot 6$\\
\hline
$v_3$ & $0$ & $ -5544\cdot 6$ & $13104$ & $0$ &  $0$ & $ -3276$ & $ -8820\cdot 6$\\
\hline
$v_4$ & $0$ & $6776\cdot 6$ & $ -16016$ & $0$ &  $0$ & $25740$ & $22484\cdot 6$\\
\hline
$v_5$ & $0$ & $1232\cdot 6$ & $-2912$  & $0$ &  $0$ & $16848$ & $10640\cdot 6$\\                                                               
\noalign{\hrule height0.8pt}
\end{tabular}
\end{center}
\label{tab:g6}
\end{table}

\begin{table}[h]
\caption{$\text{Detecting various $[1^3]$-, $[1]$-summands in $[\mathfrak{h}_{7,1}(1),v_i]$}$}
\begin{center}
\begin{tabular}{|c|c|c|c|c|c|c|c|}
\noalign{\hrule height0.8pt}
\hfil $g=7$ & $w_1$ 
& $w_2$ & $f_1$  & $f_2$
& $f_3$ & $r$ & $s$ \\
\hline
$v_1$ & $0$ & $21216\cdot 6$ & $-48960$ & $0$ & $0$ & $  -48960$   & $0$\\
\hline
$v_2$ & $0$ & $ -8320\cdot 6$ & $19200$ & $0$ &  $0$ & $84960$ & $ 35072\cdot 6$\\
\hline
$v_3$ & $0$ & $ -8736\cdot 6$ & $20160$ & $0$ &  $0$ & $-4320$ & $ -13056\cdot 6$\\
\hline
$v_4$ & $0$ & $10816\cdot 6$ & $-24960$ & $0$ &  $0$ & $39780$ & $34528\cdot 6$\\
\hline
$v_5$ & $0$ & $2080\cdot 6$ & $ -4800$  & $0$ &  $0$ & $27900$ & $17440\cdot 6$\\                                                               
\noalign{\hrule height0.8pt}
\end{tabular}
\end{center}
\label{tab:g7}
\end{table}

\newpage
\section{Polynomiality of the homomorphisms $r,s,f_i$ and $w_i$}\label{sec:polyn}

\begin{table}[h]
\caption{$\text{Polynomials of the homomorphisms $r,w_2$ on the basis}$}
\begin{center}
\begin{tabular}{|c|c|c|}
\noalign{\hrule height0.8pt}
\hfil $\text{basis}$ & $r$ 
& $w_2$ \\
\hline
$v_1$ & $-24(g+1)(2g+1)(2g+3)$ & $12(g+1)(2g-1)(2g+3)$  \\      
\hline
$v_2$ & $12(g+1)(2g+1)(8g+3)$ & $-4(g+1)(g+13)(2g-1)$  \\  
\hline
$v_3$ & $-36(g+1)(2g+1)$ & $-12g(g+1)(2g-1)$  \\
\hline
$v_4$ & $12(2g-1)(2g+1)(2g+3)$ & $8(g+1)(2g-1)^2$  \\
\hline
$v_5$ & $12(g-2)(2g+1)(4g+3)$ & $4(g-2)(g+1)(2g-1)$  \\                                    
\noalign{\hrule height0.8pt}
\end{tabular}
\end{center}
\label{tab:rw}
\end{table}

\begin{table}[h]
\caption{$\text{Polynomials of the homomorphisms $s,f_1$ on the basis}$}
\begin{center}
\begin{tabular}{|c|c|c|}
\noalign{\hrule height0.8pt}
\hfil $\text{basis}$ & $s$ 
& $f_1$ \\
\hline
$v_1$ & $0$ & $-24(g+1)(2g+1)(2g+3)$  \\      
\hline
$v_2$ & $4(g+1)^2(22g-17)$ & $8(g+1)(g+13)(2g+1)$  \\  
\hline
$v_3$ & $-12(g+1)^2(2g+3)$ & $24g(g+1)(2g+1)$  \\
\hline
$v_4$ & $4(g+1)(2g-1)(10g+13)$ & $-16(g+1)(2g-1)(2g+1)$  \\
\hline
$v_5$ & $4(g-2)(g+1)(14g+11)$ & $-8(g-2)(g+1)(2g+1)$  \\                                    
\noalign{\hrule height0.8pt}
\end{tabular}
\end{center}
\label{tab:sf}
\end{table}

In what follows, we always identify the spaces $h_{g,1}(6)^{\mathrm{Sp}}$ 
by the identifications described in Remark \ref{rem:inv_stable}. 
Thus, an element $v\in h_{g,1}(6)^{\mathrm{Sp}}$ 
is regarded as the same element for all genera under these identifications.
The purpose of this section is to prove the following proposition, 
postponing its proof until the end. 

\begin{prop}
The values of the homomorphisms $r,s,f_i,w_i$,
at any $v\in \mathfrak{h}_{g,1}^{\mathrm{Sp}} (6)$ are all polynomials in $g$
of degree at most $3$.
\label{prop:polynomial}
\end{prop}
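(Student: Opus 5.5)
Since each of $r,s,f_i,w_i$ is linear, it suffices to prove polynomiality on the basis $v_1,\dots,v_5$, which by Remark \ref{rem:inv_stable} are given by fixed chord-diagram formulas independent of $g$. Write $v_i=\sum_{D} c_D\, T_D$, where $D$ runs over linear chord diagrams on $8$ vertices, $c_D\in\Q$ does not depend on $g$, and $T_D\in H^{\otimes 8}$ is the invariant tensor
\[T_D=\sum_{a_1,\dots,a_4=1}^{g}\ \text{(product over the chords of $x_{a}\otimes y_{a}-y_{a}\otimes x_{a}$, placed at the endpoints)}.\]
Then each homomorphism, say $r$, becomes a finite sum
\[r(v_i)=\sum_D c_D\,\Phi\bigl(\,[x_1\wedge x_2\wedge y_2,\,T_D]\,\bigr),\]
where $\Phi$ extracts the coefficient of $x_1$ in $R(\cdot)$. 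The bracket with a degree-$1$ element $\xi\in\wedge^3H\subset H\otimes\mathcal{L}(2)$ is given by an explicit universal formula: on $H^{\otimes 8}$ it is a fixed $\Z$-linear combination of operations that contract one tensor factor of $T_D$ against one factor of $\xi$ (via $\mu$) and insert the remaining two factors of $\xi$ in fixed positions, together with the symmetric operation in which $T_D$ acts on $\xi$. The pattern of positions is independent of $g$. After applying $R$, $S$, $F_i$ or $W_i$, which are themselves fixed multi-contractions (Proposition \ref{prop:rs} and the definitions of $W_i,F_i$), every term is a complete contraction pattern. Such a pattern is a graph whose vertices are the $4$ summation indices $a_1,\dots,a_4$ of $T_D$ and the fixed labels $1,2,3$ coming from $\xi$ and from the extracted output coefficient.

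The key step is to evaluate such a term. Each pairing $(u\cdot u')$ between basis vectors forces equality of two indices, or forces an index to equal a fixed label in $\{1,2,3\}$, with a sign. Hence the value is a signed count of the assignments $(a_1,\dots,a_4)\in\{1,\dots,g\}^4$ that satisfy a system of equalities and inequalities. Inequalities enter only when $x$/$y$ type mismatches kill terms, and these do not depend on $g$. Such a count is a polynomial in $g$ whose degree equals the number of index-classes that remain free, i.e.\ not tied to any fixed label. This is a standard loop-counting argument for tensor contractions (each closed loop contributes a factor $2g$ or $g$). To get the degree bound I argue as follows. The bracket contracts at least one chord of $T_D$ against $\xi$, which fixes at least one index. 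The output coefficient (of $x_1$, or of $x_1\wedge x_2\wedge x_3$) has weight forcing one more fixed label. So at most $3$ of the $4$ indices can stay free, giving degree $\le 3$. One should also handle values of $g$ small enough that some fixed labels coincide with, or exceed, the available range: treat $g\ge 3$ uniformly, since labels $1,2,3$ are distinct, and note that the same counting formula specializes correctly.

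The main obstacle is the degree bound, more than polynomiality itself. A term in which the contraction with $\xi$ hits a chord whose partner end is contracted back into the output would fix only one index. I will have to check that in every such configuration a second index is still pinned, or the term cancels because $\xi$ is antisymmetric. I would first try a weight argument: $[x_1\wedge x_2\wedge y_2,T_D]$ has $\mathfrak{sp}$-weight $e_1$, so the coefficient of $x_1$ comes from terms where all other indices occur in balanced $x_a,y_a$ pairs. Counting loops in the resulting closed chord graph on $9$ points, with one open strand ending at $x_1$, bounds the number of free loops by $3$. As a consistency check, I will compare the resulting cubic polynomials with Tables \ref{tab:g3}--\ref{tab:g7}: five data points overdetermine a cubic, which confirms the formulas in Tables \ref{tab:rw} and \ref{tab:sf}.
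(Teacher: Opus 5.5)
Your proposal is correct and is essentially the paper's argument. The paper packages your observation (each term of the bracket contracts exactly one chord end of $T_D$ against a fixed basis vector of $\xi$) as the statement that $[\xi, C_1\tilde{\otimes}C_2\tilde{\otimes}C_3\tilde{\otimes}C_4]$ is a $g$-independent combination of tensors $\sigma(\xi\otimes\omega_0\otimes\omega_0\otimes\omega_0)$; it then bounds the degree because a multi-contraction of the three surviving copies of $\omega_0$ is a monomial in $g$ of degree at most $3$, which is your ``at most $3$ of the $4$ indices stay free'' count done term by term.

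The one thing to drop is your closing ``obstacle''. A single pinned index already gives degree $\le 3$. You should not try to show that the output label, or the antisymmetry of $\xi$, pins a second index: that is false in general, since the output can be carried by a factor of $\xi$ itself, and $r(v_1)=-24(g+1)(2g+1)(2g+3)$ is genuinely cubic.
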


For each homomorphism $h \in\{r,s,f_i,w_i\}$ and each $v \in \mathfrak h_{g,1}(6)^{\mathrm{Sp}}$, 
we call the polynomial $h(v)$, regarded as a function of $g$, the {\it polynomial of $h$ at $v$}.

\begin{cor}\label{cor:4g}
Each of the polynomials of the homomorphisms $r,s,f_i,w_i$ at $v$ 
is uniquely determined by its values at any four distinct genera $g_i\ (i=1,2,3,4)$,
e.g. $g=3,4,5,6$ or $g=4,5,6,7$.
\end{cor}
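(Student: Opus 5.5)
The corollary follows directly from Proposition \ref{prop:polynomial} together with the identification of Remark \ref{rem:inv_stable}. Under that identification each $v \in \mathfrak{h}_{g,1}(6)^{\mathrm{Sp}}$ is a fixed linear combination $v=\sum_i c_i v_i$ of the chord-diagram basis elements. The coefficients $c_i$ do not depend on $g$. For $g=2$ the same holds, with $v_5=0$ and the sum restricted to the span of $v_1,\dots,v_4$.

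The argument has two steps.

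First, fix $h\in\{r,s,f_1,f_2,f_3,w_1,w_2\}$ and fix $v$. By Proposition \ref{prop:polynomial} there is a polynomial $P_{h,v}(t)\in\Q[t]$ of degree at most $3$ such that $h(v)=P_{h,v}(g)$ for every admissible genus $g$. Since each $h$ is linear, $P_{h,v}=\sum_i c_i P_{h,v_i}$. It therefore suffices to treat the basis vectors, although the argument does not need this reduction.

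Second, we use elementary interpolation. Suppose two polynomials of degree at most $3$ agree at four distinct points $g_1,\dots,g_4$. Their difference is then a polynomial of degree at most $3$ with four roots, so it vanishes identically. Equivalently, the $4\times 4$ Vandermonde matrix $(g_i^{\,k})_{1\le i\le 4,\,0\le k\le 3}$ is invertible, since its determinant $\prod_{i<j}(g_j-g_i)$ is nonzero. Hence the coefficients of $P_{h,v}$ are recovered uniquely from the values $h(v)$ at $g=g_1,\dots,g_4$. Lagrange interpolation gives them explicitly.

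For the concrete choices $g=3,4,5,6$ or $g=4,5,6,7$, the required values are exactly the entries of Tables \ref{tab:g3}--\ref{tab:g7}. In each case the remaining fifth genus serves as a consistency check on the polynomials recorded in Tables \ref{tab:rw} and \ref{tab:sf}.

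There is no real obstacle in the corollary itself. All of the content lies in Proposition \ref{prop:polynomial}: the degree bound must hold uniformly in $g$, and the vectors $v$ must be identified across genera. The one point to state carefully is that $h(v)$ refers to the same element $v$ for every $g$, which is exactly what Remark \ref{rem:inv_stable} supplies. Without that identification, "the polynomial of $h$ at $v$" would not be well defined.
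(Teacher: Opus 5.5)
Your proposal is correct and matches the paper's argument: the degree-$\le 3$ bound from Proposition~\ref{prop:polynomial}, combined with uniqueness of interpolation at four distinct genera (equivalently, invertibility of the Vandermonde matrix). Your remark that $v$ must be viewed as the same element across genera via Remark~\ref{rem:inv_stable} is left implicit in the paper's setup, and it is a reasonable point to make explicit.
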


\begin{proof}[Proof of Corollary $\ref{cor:4g}$]
A polynomial in $g$ of at most degree $d$ is uniquely determined by its
values at any distinct $d+1$ values of $g$. This follows, for example, from 
the invertibility of the corresponding Vandermonde matrix.
\end{proof}

\begin{prop}
The homomorphisms $f_2,f_3,w_1$ vanish identically, and
the polynomials of the homomorphisms $r,s,f_1,w_2$ at the elements $v_i$ are given by 
Tables $\ref{tab:rw}$ and $\ref{tab:sf}$.
\label{prop:poly}
\end{prop}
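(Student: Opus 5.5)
The plan is to derive the proposition from Proposition \ref{prop:polynomial} and Corollary \ref{cor:4g}, together with the computed values in Tables \ref{tab:g3}--\ref{tab:g7}. By Proposition \ref{prop:polynomial}, each of $r(v_i)$, $s(v_i)$, $f_k(v_i)$, $w_k(v_i)$ is a polynomial in $g$ of degree at most $3$, where $v_i$ is regarded as the same element for all genera via Remark \ref{rem:inv_stable}. By Corollary \ref{cor:4g}, such a polynomial is determined by its values at four distinct genera. So the proof reduces to interpolation: for each pair $(h,v_i)$ with $h\in\{r,s,f_1,f_2,f_3,w_1,w_2\}$ and $i=1,\dots,5$, read off the values at $g=3,4,5,6$ from Tables \ref{tab:g3}--\ref{tab:g6}, and verify that the candidate cubic in Table \ref{tab:rw} or Table \ref{tab:sf} takes exactly these values. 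Uniqueness then identifies the cubic with the polynomial of $h$ at $v_i$.

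For $f_2,f_3,w_1$, every table entry at $g=3,4,5,6$ is $0$. The unique cubic through four zeros is the zero polynomial, so these homomorphisms vanish at every $v_i$ for all $g\geq 3$. By linearity they vanish identically on $\mathfrak{h}_{g,1}(6)^{\mathrm{Sp}}$.

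For the remaining entries, the check is a direct substitution. For example, $-24(g+1)(2g+1)(2g+3)$ at $g=3$ gives $-24\cdot4\cdot7\cdot9=-6048$, matching both $r(v_1)$ and $f_1(v_1)$. Likewise, $12(g+1)(2g-1)(2g+3)$ at $g=3$ gives $12\cdot4\cdot5\cdot9=2160\cdot 6$, matching $w_2(v_1)$. I would do this substitution for all $4\times5\times 4$ entries. As an independent consistency check, I would also evaluate at $g=7$ against Table \ref{tab:g7}. That check is redundant for the proof but guards against errors in the Mathematica data or the transcription.

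The genus $2$ case needs separate handling, because the identification of Remark \ref{rem:inv_stable} sets $v_5=0$. I would argue that Proposition \ref{prop:polynomial} also covers $g=2$, with $v_1,\dots,v_4$ viewed inside $\mathfrak h_{3,1}(6)^{\mathrm{Sp}}$. Then the polynomials in the $v_5$ rows should vanish at $g=2$, and indeed each carries the factor $(g-2)$, consistent with $v_5=0$. No further data is needed at $g=2$, since the polynomials are already pinned down by $g\ge 3$.

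The main obstacle is not the interpolation itself but the reliability of its input. The argument depends on Proposition \ref{prop:polynomial}, that the degree is at most $3$, which the paper proves later, so I take it as given. It also depends on the numerical tables being correct. Beyond that, the remaining work is routine arithmetic, which is where I would expect slips.
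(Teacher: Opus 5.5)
Your strategy is the paper's: Proposition~\ref{prop:polynomial} and Corollary~\ref{cor:4g} reduce everything to interpolating the data of Tables~\ref{tab:g3}--\ref{tab:g7} at four genera, and the all-zero columns give $f_2=f_3=w_1=0$. However, your sample check is false. $12\cdot 4\cdot 5\cdot 9=2160$, not $2160\cdot 6$.

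This is not an isolated slip. Every $w_2$- and $s$-entry in Tables~\ref{tab:g3}--\ref{tab:g7} is printed in the form $X\cdot 6$, and the cubics in Tables~\ref{tab:rw} and \ref{tab:sf} reproduce $X$, not $X\cdot 6$. For example, $4(g+1)^2(22g-17)$ at $g=3$ is $3136$, whereas $s(v_2)$ is listed as $3136\cdot 6$. Likewise $-4(g+1)(g+13)(2g-1)$ at $g=3$ is $-1280$, whereas $w_2(v_2)$ is listed as $-1280\cdot 6$. Only for $r$ and $f_1$ do the cubics match the printed numbers exactly.

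So your step ``verify that the candidate cubic takes exactly these values'' fails, taken literally, for two of the four homomorphisms. Your proof must state the normalization explicitly: Tables~\ref{tab:rw} and \ref{tab:sf} record $w_2$ and $s$ with the common factor $6$ removed. The paper uses this convention afterwards. For instance, $s(2j_1-3j_2)=2160$ at $g=3$ in Table~\ref{tab:sjj} equals $-7\cdot 3136+19\cdot 1728-5\cdot 3440+10\cdot 848$, computed from the un-multiplied entries.

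Downstream, only the vanishing of $w_2$ and $s$ and the proportionality of $f_1$ and $w_2$ are used. A uniform rescaling by $6$ is therefore harmless. Without stating it, though, you are asserting identities that do not hold.

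A smaller point concerns your genus $2$ paragraph. It is unnecessary, and partly ill-posed. The maps $w_i$ and $s$ are defined through $x_1\wedge x_2\wedge x_3$, so they do not exist for $g=2$. Only $r$ and $f_1$ make sense there, and the proposition only needs the data for $g\ge 3$.
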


\begin{proof}
This follows by applying Corollary \ref{cor:4g} to 
the data for any four of the genera listed in Tables \ref{tab:g3}--\ref{tab:g7}, 
for example $g=3,4,5,6$ or $g=4,5,6,7$.
As an example, consider $r(v_1)$. 
By Proposition \ref{prop:polynomial}, we can write
$$
r(v_1)(g)=a g^3+b g^2+c g+d
$$
for some constants $a,b,c,d$. Then we have
$$
\begin{pmatrix}
g_1^3 & g_2^3 & g_3^3 & g_4^3\\
g_1^2 & g_2^2 & g_3^2 & g_4^2\\
g_1 & g_2 & g_3 & g_4\\
1 & 1 & 1 & 1\\
\end{pmatrix}
\begin{pmatrix}
a\\
b\\
c\\
d\\
\end{pmatrix}
=
\begin{pmatrix}
r(v_1)(g_1)\\
r(v_1)(g_2)\\
r(v_1)(g_3)\\
r(v_1)(g_4)\\
\end{pmatrix}
$$
where $g_1,g_2,g_3,g_4$ are four values of $g$. If they are mutually distinct, then we have
$$
\begin{pmatrix}
a\\
b\\
c\\
d\\
\end{pmatrix}
=
\begin{pmatrix}
g_1^3 & g_2^3 & g_3^3 & g_4^3\\
g_1^2 & g_2^2 & g_3^2 & g_4^2\\
g_1 & g_2 & g_3 & g_4\\
1 & 1 & 1 & 1\\
\end{pmatrix}
^{-1}
\begin{pmatrix}
r(v_1)(g_1)\\
r(v_1)(g_2)\\
r(v_1)(g_3)\\
r(v_1)(g_4)\\
\end{pmatrix}
$$
Taking $g_1,g_2,g_3,g_4$ to be $3,4,5,6$, we obtain from Tables \ref{tab:g3}--\ref{tab:g6} that 
$$
\begin{pmatrix}
r(v_1)(3)\\
r(v_1)(4)\\
r(v_1)(5)\\
r(v_1)(6)\\
\end{pmatrix}
=
\begin{pmatrix}
-6048\\
-11880\\
-20592\\
-32760\\
\end{pmatrix}.
$$
Hence
$$
\begin{pmatrix}
a\\
b\\
c\\
d\\
\end{pmatrix}
=
\begin{pmatrix}
3^3 & 4^3 & 5^3 & 6^3\\
3^2 & 4^2 & 5^2 & 6^2\\
3 & 4 & 5 & 6\\
1 & 1 & 1 & 1\\
\end{pmatrix}
^{-1}
\begin{pmatrix}
-6048\\
-11880\\
-20592\\
-32760\\
\end{pmatrix}
=
\begin{pmatrix}
-96\\
-288\\
-264\\
-72\\
\end{pmatrix}.
$$
From this, we conclude that 
$$
r(v_1)(g)=-96g^3-288g^2-264g-72=-24(g+1)(2g+1)(2g+3).
$$
Alternatively, factoring the values of $r(v_1)$ as follows,
$$
\begin{pmatrix}
r(v_1)(3)\\
r(v_1)(4)\\
r(v_1)(5)\\
r(v_1)(6)\\
\end{pmatrix}
=
\begin{pmatrix}
-6048\\
-11880\\
-20592\\
-32760\\
\end{pmatrix}
=
\begin{pmatrix}
-24\cdot 4\cdot 7\cdot 9\\
-24\cdot 5\cdot 9\cdot 11\\
-24\cdot 6\cdot 11\cdot 13\\
-24\cdot 7\cdot 13\cdot 15\\
\end{pmatrix},
$$
we can read off the required polynomial without solving the above linear equation.
\end{proof}


We now establish several preliminary facts for the proof of Proposition \ref{prop:polynomial}.
First, we recall that the Lie bracket 
\[[\cdot ,\cdot ]: \mathfrak{h}_{g,1}(s-1)\otimes \mathfrak{h}_{g,1} (t-1)\longrightarrow 
\mathfrak{h}_{g,1} (s+t-2)\]
on $\mathfrak{h}_{g,1}$ is extended to an operation
\[[\cdot ,\cdot ]: H^{\otimes (s+1)}\otimes H^{\otimes (t+1)}\longrightarrow H^{\otimes (s+t)}\]
on the tensor algebra on $H$ defined as follows.
\begin{align*}
&[u_0\otimes u_1\otimes\cdots\otimes u_s, v_0\otimes v_1\otimes\cdots\otimes v_{t}]\\
=&-u_0\otimes \sum_{i=1}^s (v_0\cdot u_i)\, u_1\otimes \cdots \otimes u_{i-1}\otimes 
(v_1\otimes\cdots \otimes v_t)\otimes u_{i+1}\otimes\cdots\otimes u_s\\
&\hspace{3mm} +v_0\otimes \sum_{j=1}^t (u_0\cdot v_j)\, v_1\otimes \cdots \otimes v_{j-1}\otimes 
(u_1\otimes\cdots \otimes u_s)\otimes v_{j+1}\otimes\cdots\otimes v_t.
\end{align*}
Here $u_i, v_j\in H$ and $u\cdot v$ denotes the intersection number of $u$ and $v$.

\begin{lem}
We have the equality
\begin{align*}
[x_1\otimes x_2\otimes x_3, \omega_0&\otimes\omega_0\otimes\omega_0\otimes\omega_0]=
-x_1\otimes x_2\otimes \omega_0\otimes\omega_0\otimes\omega_0\otimes x_3\\
&+\omega_0\otimes (x_1\otimes x_2\otimes x_3-x_2\otimes x_3\otimes x_1)
\otimes \omega_0\otimes\omega_0\\
&+\omega_0\otimes \omega_0\otimes 
(x_1\otimes x_2\otimes x_3-x_2\otimes x_3\otimes x_1)\otimes \omega_0\\
&+\omega_0\otimes \omega_0\otimes \omega_0\otimes 
(x_1\otimes x_2\otimes x_3-x_2\otimes x_3\otimes x_1).
\end{align*}
\label{lem:123}
\end{lem}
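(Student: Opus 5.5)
The plan is to compute the bracket directly from the defining formula for $[\cdot,\cdot]\colon H^{\otimes 3}\otimes H^{\otimes 8}\to H^{\otimes 9}$, with $s=2$ and $t=7$. On the left we take $u_0=x_1$, $u_1=x_2$, $u_2=x_3$. On the right we expand $\omega_0^{\otimes 4}$ multilinearly. We never expand $\omega_0$ in the basis explicitly; instead everything reduces to two elementary contraction identities for $\omega_0=\sum a\otimes b$ (shorthand for $\sum_k (x_k\otimes y_k-y_k\otimes x_k)$). For any $z\in H$:
\[
\sum (a\cdot z)\, b = z, \qquad \sum (z\cdot b)\, a = z, \qquad \sum (z\cdot a)\, b=-z .
\]
These follow from $x_k\cdot y_k=1$, $y_k\cdot x_k=-1$ and all other intersection numbers of basis vectors vanishing. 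It suffices to check them on basis vectors. Both sides of each identity are linear in $z$, and the second and third follow from the first by skew-symmetry.

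\emph{The first sum.} Here $v_0$ is the first factor of the first $\omega_0$, and $v_1\otimes\cdots\otimes v_7=b\otimes\omega_0^{\otimes 3}$. Applying the first identity to the term $i=1$ gives $-x_1\otimes x_2\otimes\omega_0^{\otimes 3}\otimes x_3$, which is exactly the first term of the lemma. The term $i=2$ gives $-x_1\otimes x_2\otimes x_3\otimes\omega_0^{\otimes 3}$.

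\emph{The second sum.} We group the positions $j=1,\dots,7$ according to which copy of $\omega_0$ and which tensor factor $v_j$ belongs to.
\begin{itemize}
\item For $j=1$, $v_j$ is the second factor of the first $\omega_0$. The second identity (with $z=x_1$) turns the pair $v_0\otimes(\cdot)$ into $x_1\otimes x_2\otimes x_3$, producing $+x_1\otimes x_2\otimes x_3\otimes\omega_0^{\otimes 3}$. This cancels the $i=2$ term above.
\item For $j$ the first factor of the $m$-th copy of $\omega_0$ ($m=2,3,4$), the third identity yields $-x_1$ in the position of $b$. After inserting $u_1\otimes u_2=x_2\otimes x_3$ in place of $a$, this slot reads $-x_2\otimes x_3\otimes x_1$.
\item For $j$ the second factor of the $m$-th copy ($m=2,3,4$), the second identity gives $x_1$ in the slot of $a$, followed by $x_2\otimes x_3$, producing $x_1\otimes x_2\otimes x_3$ in that slot.
\end{itemize}
In both of the last two cases the first copy of $\omega_0$ (namely $v_0\otimes v_1$) is untouched and reassembles as $\omega_0$ in front, as do the other untouched copies. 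Summing the two contributions for each $m=2,3,4$ gives the three terms $\omega_0^{\otimes(m-1)}\otimes(x_1\otimes x_2\otimes x_3-x_2\otimes x_3\otimes x_1)\otimes\omega_0^{\otimes(4-m)}$ of the lemma.

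The only delicate point is bookkeeping. We must keep track of the order of factors when $u_1\otimes u_2$ is inserted at position $j$, in particular that the surviving partner of the contracted factor lands after the inserted block when $v_j$ is a first factor and before it when $v_j$ is a second factor. We must also get the signs right in the three contraction identities. I would verify these signs carefully on $z=x_1$ first, since the cancellation of the $x_1\otimes x_2\otimes x_3\otimes\omega_0^{\otimes 3}$ terms serves as a consistency check.
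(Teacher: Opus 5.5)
Your proposal is correct and follows the same route as the paper: a direct expansion of the defining bracket formula, in which the $i=2$ term of the first sum cancels the $j=1$ term of the second sum, and the contributions from $j\ge 2$ assemble into $\omega_0\otimes Q$. The only difference is cosmetic: you package the contractions into basis-free identities for $\omega_0$, whereas the paper expands the first copy of $\omega_0$ in the symplectic basis.
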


\begin{proof}
By the definition of the bracket, we have
\begin{align*}
[x_1\otimes x_2\otimes x_3,&\, \omega_0\otimes\omega_0\otimes\omega_0\otimes\omega_0]=\\
&-x_1\otimes (x_2\otimes \omega_0\otimes\omega_0\otimes\omega_0\otimes x_3
+x_2\otimes x_3\otimes \omega_0\otimes\omega_0\otimes\omega_0)\\
&+x_1\otimes\left( x_2\otimes x_3\otimes \omega_0\otimes\omega_0\otimes\omega_0
+y_1\otimes Q  \right)\\
&(-y_1\otimes x_1+x_2\otimes y_2-y_2\otimes x_2+\cdots+x_g\otimes y_g-y_g\otimes x_g)\otimes Q,
\end{align*}
where
\begin{align*}
Q&= x_1\otimes x_2\otimes x_3\otimes \omega_0\otimes\omega_0
- x_2\otimes x_3\otimes x_1\otimes \omega_0\otimes\omega_0\\
&\quad + \omega_0\otimes x_1\otimes x_2\otimes x_3\otimes \omega_0
- \omega_0\otimes x_2\otimes x_3\otimes x_1\otimes\omega_0\\
&\quad + \omega_0\otimes \omega_0\otimes x_1\otimes x_2\otimes x_3
- \omega_0\otimes \omega_0\otimes x_2\otimes x_3\otimes x_1.\\
\end{align*}
Hence we have
\[[x_1\otimes x_2\otimes x_3, \ \omega_0\otimes\omega_0\otimes\omega_0\otimes\omega_0]=\\
-x_1\otimes x_2\otimes \omega_0\otimes\omega_0\otimes\omega_0\otimes x_3
+\omega_0\otimes Q.\]
Substituting the expression for $Q$ into the preceding formula 
gives the right-hand side of the claimed identity.
\end{proof}

\begin{lem}
For any permutation $\sigma=\begin{pmatrix}
1 & 2 & 3\\
\sigma_1&\sigma_2&\sigma_3
\end{pmatrix}$ of the three letters $1,2,3$, 
we have the equality
\begin{align*}
[x_{\sigma_1}\otimes x_{\sigma_2}\otimes x_{\sigma_3}, \omega_0&\otimes\omega_0\otimes\omega_0\otimes\omega_0]=
-x_{\sigma_1}\otimes x_{\sigma_2}\otimes \omega_0\otimes\omega_0\otimes\omega_0\otimes x_{\sigma_3}\\
&+\omega_0\otimes 
(x_{\sigma_1}\otimes x_{\sigma_2}\otimes x_{\sigma_3}
-x_{\sigma_2}\otimes x_{\sigma_3}\otimes x_{\sigma_1})
\otimes \omega_0\otimes\omega_0\\
&+\omega_0\otimes \omega_0\otimes 
(x_{\sigma_1}\otimes x_{\sigma_2}\otimes x_{\sigma_3}
-x_{\sigma_2}\otimes x_{\sigma_3}\otimes x_{\sigma_1})
\otimes \omega_0\\
&+\omega_0\otimes \omega_0\otimes \omega_0\otimes 
(x_{\sigma_1}\otimes x_{\sigma_2}\otimes x_{\sigma_3}
-x_{\sigma_2}\otimes x_{\sigma_3}\otimes x_{\sigma_1})
\end{align*}
\label{lem:s123}
\end{lem}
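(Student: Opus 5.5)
The plan is to derive Lemma \ref{lem:s123} from Lemma \ref{lem:123} by a symmetry argument, not by redoing the bracket computation.

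Both sides of Lemma \ref{lem:123} are built from three ingredients: the bracket on the tensor algebra, the element $\omega_0\otimes\omega_0\otimes\omega_0\otimes\omega_0$, and the tensor $x_1\otimes x_2\otimes x_3$. The idea is to realize the permutation $\sigma$ as a symplectic automorphism that fixes the first two ingredients and sends $x_1\otimes x_2\otimes x_3$ to $x_{\sigma_1}\otimes x_{\sigma_2}\otimes x_{\sigma_3}$.

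\textbf{Step 1: the symplectic automorphism.} Let $\phi_\sigma\in\mathrm{Sp}(H)$ be the linear map
\[
x_i\mapsto x_{\sigma_i},\qquad y_i\mapsto y_{\sigma_i}\quad (i=1,2,3),
\]
fixing all $x_k,y_k$ with $k\ge 4$. It permutes the symplectic pairs $(x_i,y_i)$, so it preserves the intersection form $\mu$. It also fixes $\omega_0$, since $\omega_0=\sum_k(x_k\otimes y_k-y_k\otimes x_k)$ is just reindexed by $\phi_\sigma$.

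\textbf{Step 2: equivariance of the bracket.} Extend $\phi_\sigma$ diagonally to $H^{\otimes n}$ for every $n$. The extended bracket on the tensor algebra is built only from tensor concatenation, insertion, and the pairings $u\cdot v$. Since $\phi_\sigma$ preserves $\mu$, the bracket is equivariant:
\[
\phi_\sigma[X,Y]=[\phi_\sigma X,\phi_\sigma Y].
\]
This is checked directly from the defining formula, term by term.

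\textbf{Step 3: transport the identity.} Apply $\phi_\sigma$ to both sides of Lemma \ref{lem:123}. On the left, equivariance together with $\phi_\sigma(\omega_0)=\omega_0$ gives
\[
[x_{\sigma_1}\otimes x_{\sigma_2}\otimes x_{\sigma_3},\ \omega_0\otimes\omega_0\otimes\omega_0\otimes\omega_0].
\]
On the right, each factor $\omega_0$ is fixed, each $x_i$ becomes $x_{\sigma_i}$, and the positional structure is unchanged. For example, $x_2\otimes x_3\otimes x_1$ becomes $x_{\sigma_2}\otimes x_{\sigma_3}\otimes x_{\sigma_1}$. This is exactly the claimed right-hand side.

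\textbf{The only point needing care} is that $\phi_\sigma$ uses the indices $1,2,3$, which requires $g\ge 3$. For $g=2$ the tensor $x_1\otimes x_2\otimes x_3$ does not exist, so the statement is vacuous, or it is read for $g\ge3$ where it is used. Note that $x_1$ is not fixed by $\phi_\sigma$, even though in Lemma \ref{lem:123} $\omega_0$ contains the summand $x_1\otimes y_1-y_1\otimes x_1$. This causes no difficulty, because $\phi_\sigma$ preserves $\omega_0$ as a whole.

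If one prefers to avoid symmetry altogether, the direct check repeats the computation of Lemma \ref{lem:123} with $x_{\sigma_i}$ in place of $x_i$. That computation used only two facts: $x_a\cdot x_b=0$, and $\omega_0$ contracted against $x_a$ reproduces $x_a$ with the appropriate sign. Both hold verbatim after relabeling.
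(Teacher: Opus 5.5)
Your argument is correct and is essentially the paper's proof, which says either to redo the computation of Lemma \ref{lem:123} or to apply the index permutation $\sigma$ to that identity, using that $\omega_0$ is invariant. Your symplectic automorphism $\phi_\sigma$, together with the equivariance of the bracket under form-preserving maps, makes that second route precise.
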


\begin{proof}
Computations similar to those in the proof of Lemma \ref{lem:123} 
yield the required identity.
Alternatively, apply the index permutation $\sigma$ to the equality in Lemma \ref{lem:123}, 
noting that $\omega_0$ is invariant under permutations of the basis indices. 
\end{proof}

\begin{prop}
We have the equality
\[[x_1\wedge x_2\wedge x_3, \omega_0\otimes\omega_0\otimes\omega_0\otimes\omega_0]=- 
\sigma (x_1\wedge x_2\wedge x_3\otimes\omega_0\otimes\omega_0\otimes\omega_0),\]
where $\sigma$ is the permutation operator on $H^{\otimes9}$ defined by
\[\sigma(u_1\otimes u_2\otimes u_3\otimes\omega_0\otimes\omega_0\otimes\omega_0)
=u_1\otimes u_2\otimes\omega_0\otimes\omega_0\otimes\omega_0\otimes u_3.\]
\label{prop:123om}
\end{prop}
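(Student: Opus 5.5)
The plan is to reduce the statement to Lemma \ref{lem:s123} by linearity, and then observe that the three "inner" terms cancel because a cyclic permutation of three letters is even.

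First, expand the wedge product as an alternating sum in $H^{\otimes 3}$:
\[
x_1\wedge x_2\wedge x_3=\sum_{\sigma\in\mathfrak{S}_3}\mathrm{sgn}(\sigma)\, x_{\sigma_1}\otimes x_{\sigma_2}\otimes x_{\sigma_3}.
\]
This agrees with the six-term expression recalled at the beginning of Section \ref{sec:1_13}. The extended bracket $[\cdot,\cdot]\colon H^{\otimes 3}\otimes H^{\otimes 8}\to H^{\otimes 9}$ is bilinear. So $[x_1\wedge x_2\wedge x_3,\omega_0^{\otimes 4}]$ equals $\sum_\sigma \mathrm{sgn}(\sigma)$ times the right-hand side of Lemma \ref{lem:s123}.

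Next, sort the resulting terms into the four groups appearing in Lemma \ref{lem:s123}.

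\begin{itemize}
\item The first group is $-\sum_\sigma \mathrm{sgn}(\sigma)\, x_{\sigma_1}\otimes x_{\sigma_2}\otimes\omega_0\otimes\omega_0\otimes\omega_0\otimes x_{\sigma_3}$. By linearity of the permutation operator $\sigma$ of the statement, this is exactly $-\sigma(x_1\wedge x_2\wedge x_3\otimes\omega_0\otimes\omega_0\otimes\omega_0)$.
\item Each of the other three groups has the form $\omega_0^{\otimes a}\otimes T\otimes\omega_0^{\otimes b}$ with $a+b=3$, $a\ge 1$, where
\[
T=\sum_\sigma \mathrm{sgn}(\sigma)\,\bigl(x_{\sigma_1}\otimes x_{\sigma_2}\otimes x_{\sigma_3}-x_{\sigma_2}\otimes x_{\sigma_3}\otimes x_{\sigma_1}\bigr).
\]
\end{itemize}

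It remains to show $T=0$. Write $c=(1\,2\,3)$. The second sum is $\sum_\sigma \mathrm{sgn}(\sigma)\, x_{(\sigma c)_1}\otimes x_{(\sigma c)_2}\otimes x_{(\sigma c)_3}$. Reindex by $\tau=\sigma c$. Since $c$ is a $3$-cycle, it is even, so $\mathrm{sgn}(\tau)=\mathrm{sgn}(\sigma)$. The second sum therefore equals the first, namely $x_1\wedge x_2\wedge x_3$, and $T=x_1\wedge x_2\wedge x_3-x_1\wedge x_2\wedge x_3=0$. All three inner groups vanish, which gives the claimed identity.

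The only point needing care is the bookkeeping. One must check that the index permutation in Lemma \ref{lem:s123} really acts as composition with $c$ inside the cyclically shifted term, so that the sign is preserved under reindexing. One must also check that the operator $\sigma$ of the statement is applied factorwise, treating $\omega_0^{\otimes 3}$ as a block. Both are immediate from the formulas, so I expect no genuine obstacle.
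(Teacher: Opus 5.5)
Your proof is correct and follows the paper's own argument. Like the paper, you expand $x_1\wedge x_2\wedge x_3$ into its six signed terms, apply Lemma~\ref{lem:s123} to each, recognize the first terms as $-\sigma(x_1\wedge x_2\wedge x_3\otimes\omega_0\otimes\omega_0\otimes\omega_0)$, and show the inner terms cancel; the paper checks that cancellation by writing out the six pairs explicitly, while your reindexing by the even $3$-cycle $c$ gives it in one line (you may want to rename your summation permutation to avoid clashing with the operator $\sigma$ of the statement).
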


\begin{proof}
Recall that the element $x_1\wedge x_2\wedge x_3$ is expressed as 
\begin{align*}
x_1\wedge x_2\wedge x_3=&x_1\otimes x_2\otimes x_3+x_2\otimes x_3\otimes x_1+x_3\otimes x_1\otimes x_2\\
-&x_2\otimes x_1\otimes x_3-x_3\otimes x_2\otimes x_1-x_1\otimes x_3\otimes x_2.
\end{align*}
\noindent
Summing the first terms on the right-hand sides of Lemmas \ref{lem:123} and \ref{lem:s123} 
corresponding to the six terms above, we obtain exactly the expression in the statement.
Therefore it remains to show that the remaining terms sum up to zero.
The second term of the equality in Lemma \ref{lem:123} for the first component 
$x_1\otimes x_2\otimes x_3$ of $x_1\wedge x_2\wedge x_3$ is
$$
\omega_0\otimes (x_1\otimes x_2\otimes x_3-x_2\otimes x_3\otimes x_1)
\otimes \omega_0\otimes\omega_0.
$$
Adding the corresponding second terms from Lemma \ref{lem:s123} for 
the remaining five terms of $x_1\wedge x_2\wedge x_3$, we obtain
\begin{align*}
&\omega_0\otimes (x_1\otimes x_2\otimes x_3-x_2\otimes x_3\otimes x_1)
\otimes \omega_0\otimes\omega_0\\
+&\omega_0\otimes (x_2\otimes x_3\otimes x_1-x_3\otimes x_1\otimes x_2)
\otimes \omega_0\otimes\omega_0\\
+&\omega_0\otimes (x_3\otimes x_1\otimes x_2-x_1\otimes x_2\otimes x_3)
\otimes \omega_0\otimes\omega_0\\
+&\omega_0\otimes (-x_2\otimes x_1\otimes x_3+x_1\otimes x_3\otimes x_2)
\otimes \omega_0\otimes\omega_0\\
+&\omega_0\otimes (-x_3\otimes x_2\otimes x_1+x_2\otimes x_1\otimes x_3)
\otimes \omega_0\otimes\omega_0\\
+&\omega_0\otimes (-x_1\otimes x_3\otimes x_2+x_3\otimes x_2\otimes x_1)
\otimes \omega_0\otimes\omega_0=0.\\
\end{align*}
Similarly, the corresponding third and fourth terms in 
Lemma \ref{lem:123} and Lemma \ref{lem:s123} sum up to zero. 
This completes the proof.
\end{proof}

Now we generalize Lemma \ref{lem:123}, Lemma \ref{lem:s123} 
and Proposition \ref{prop:123om} by replacing 
$\omega_0\otimes \omega_0\otimes\omega_0\otimes\omega_0$
with the tensor associated with 
an arbitrary linear chord diagram 
\begin{align*}
C=&\{\{i_1,j_1\},\{i_2,j_2\},\{i_3,j_3\},\{i_4,j_4\}\}\\
1=i_1&<i_2<i_3<i_4\leq 7,\quad 
 i_k <j_k\ (k=1,2,3,4)
\end{align*}
consisting of four chords. We define
$$
C_1\tilde{\otimes}C_2\tilde{\otimes}C_3\tilde{\otimes}C_4\in (H^{\otimes 8})^{\mathrm{Sp}}
$$
to be the tensor product of four symplectic elements
$$
C_k=\omega_0(k)\in H^{(i_k)}\otimes H^{(j_k)}\quad (k=1,2,3,4)
$$
where $H^{(r)}$ denotes the $r$-th component of $H^{\otimes 8}$
and the symbol $\tilde{\otimes}$ denotes the tensor product respecting 
the order of components.

\begin{lem}
We have the equality
\begin{align*}
[x_1\otimes x_2\otimes x_3, C_1&\tilde{\otimes}C_2\tilde{\otimes}C_3\tilde{\otimes}C_4]=
-x_1\otimes \{\check{i}_1\tilde{\otimes}j_1(x_2) 
\tilde{\otimes}C_2\tilde{\otimes}C_3\tilde{\otimes}C_4\}\otimes x_3\\
&-x_1\otimes x_2\otimes \{\check{i}_1\tilde{\otimes}j_1(x_3) 
\tilde{\otimes}C_2\tilde{\otimes}C_3\tilde{\otimes}C_4\}\\
&+\{i_1(x_1)\tilde{\otimes}j_1(x_2\otimes x_3) 
\tilde{\otimes}C_2\tilde{\otimes}C_3\tilde{\otimes}C_4\}\\
&+\{C_1\tilde{\otimes} \left(i_2(x_1)\tilde{\otimes}j_2(x_2\otimes x_3)
-i_2(x_2\otimes x_3)\tilde{\otimes}j_2(x_1)\right)
\tilde{\otimes}C_3 \tilde{\otimes}C_4\}\\
&+\{C_1\tilde{\otimes}C_2\tilde{\otimes} \left(i_3(x_1)\tilde{\otimes}j_3(x_2\otimes x_3)
-i_3(x_2\otimes x_3)\tilde{\otimes}j_3(x_1)\right)
 \tilde{\otimes}C_4\}\\
&+\{C_1\tilde{\otimes}C_2 \tilde{\otimes}C_3\tilde{\otimes} 
\left(i_4(x_1)\tilde{\otimes}j_4(x_2\otimes x_3)
-i_4(x_2\otimes x_3)\tilde{\otimes}j_4(x_1)\right)
\}.
\end{align*}
Here in the notation 
$\{\check{i}_1\tilde{\otimes}j_1(x_2) 
\tilde{\otimes}C_2\tilde{\otimes}C_3\tilde{\otimes}C_4\}$, 
for example, the symbol $\check{i}_1$ indicates that the $i_1$-st component 
is deleted, while $j_1(x_2)$ indicates that $x_2$ is inserted into the original $j_1$-st component
of $H^{\otimes 8}$.
Also the symbol $\{\ \ \}$ indicates that all positions are counted in the original tensor product $H^{\otimes8}$. 
After carrying out the indicated deletions and insertions, the factors written before and after $\{\ \ \}$ 
are then appended to form the resulting tensor.
Thus
$$
x_1\otimes \{\check{i}_1\tilde{\otimes}j_1(x_2) 
\tilde{\otimes}C_2\tilde{\otimes}C_3\tilde{\otimes}C_4\}\otimes x_3
\in H\otimes H^{\otimes 7}\otimes H=H^{\otimes 9}.
$$
\label{lem:g123}
\end{lem}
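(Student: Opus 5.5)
We prove Lemma \ref{lem:g123} by a direct expansion of the defining formula for the bracket $[\cdot,\cdot]\colon H^{\otimes 3}\otimes H^{\otimes 8}\to H^{\otimes 9}$, following the computation in the proof of Lemma \ref{lem:123}. The only change is that the four copies of $\omega_0$ now sit in positions $\{i_k,j_k\}$ rather than in consecutive positions. Take $u_0\otimes u_1\otimes u_2=x_1\otimes x_2\otimes x_3$ and write $C_1\tilde{\otimes}\cdots\tilde{\otimes}C_4$ as a sum of pure tensors $v_0\otimes v_1\otimes\cdots\otimes v_7$. The bracket then splits into two groups of terms.

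\textbf{First group.} These are the terms $-u_0\otimes\sum_{i=1}^2(v_0\cdot u_i)\cdots$. Since $i_1=1$, the first tensor slot $v_0$ always carries the $i_1$-end of the chord $C_1$. Fix $v_0=x_m$ or $y_m$ with partner $v_{j_1-1}=y_m$ or $-x_m$. The coefficient $(v_0\cdot x_2)$ is nonzero only for $v_0=y_2$. Contracting this against $x_2$ and summing over $m$ uses the identity $\sum_m\bigl((x_m\cdot u)\,y_m-(y_m\cdot u)\,x_m\bigr)=\pm u$. The effect is to delete position $i_1$ and place $x_2$ in position $j_1$, and the remaining chords $C_2,C_3,C_4$ are untouched. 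With the appropriate placement of $u_2=x_3$, this gives the term $-x_1\otimes\{\check{i}_1\tilde{\otimes}j_1(x_2)\tilde{\otimes}\cdots\}\otimes x_3$. The case $i=2$ gives the second term in the same way. The sign must be fixed by the convention $\omega_0=\sum(x_m\otimes y_m-y_m\otimes x_m)$, exactly as in Lemma \ref{lem:123}.

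\textbf{Second group.} These are the terms $+v_0\otimes\sum_{j=1}^7(u_0\cdot v_j)\cdots$, in which $x_1$ is contracted against each slot $v_j$ of the 8-tensor and $x_2\otimes x_3$ is inserted there. Organize the sum according to which chord $C_k$ contains the slot $j$.

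\emph{Slot in $C_1$.} The slot must be $j_1$, since $j\ge1$ excludes $i_1$. The contraction $\sum_m (x_1\cdot v_{j_1})$ against the partner $v_0$ at $i_1$ collapses $\omega_0$ to $x_1$ in position $i_1$, while $x_2\otimes x_3$ occupies position $j_1$. This is the term $\{i_1(x_1)\tilde{\otimes}j_1(x_2\otimes x_3)\tilde{\otimes}C_2\tilde{\otimes}C_3\tilde{\otimes}C_4\}$.

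\emph{Slot in $C_k$, $k\ge2$.} There are two contributions, one for the slot at $i_k$ and one for the slot at $j_k$. Contracting $x_1$ with one end of $\omega_0(k)$ leaves $x_1$, up to sign, at the other end, and $x_2\otimes x_3$ is inserted at the contracted end. The antisymmetry of $\omega_0$ produces the relative sign, giving $i_k(x_1)\tilde{\otimes}j_k(x_2\otimes x_3)-i_k(x_2\otimes x_3)\tilde{\otimes}j_k(x_1)$.

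These pieces account for exactly the six terms in the statement. Lemma \ref{lem:123} is the special case $C=\{\{1,2\},\{3,4\},\{5,6\},\{7,8\}\}$. There the first and third terms combine, which is consistent with writing $\omega_0\otimes Q$.

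The main obstacle is bookkeeping rather than ideas. The key steps are:
\begin{itemize}
\item Track signs from the antisymmetry of $\omega_0$, using $y\cdot x=-x\cdot y$.
\item Keep the positional convention for $\{\ \}$ straight when $i_1=1$ is removed or refilled and the slot $v_0$ is pulled out front.
\item Check that $j_1$ may equal any value, including $8$, without changing the formulas.
\end{itemize}
I would settle the signs by checking one representative chord pairing in coordinates, for instance with $g$ large and explicit $x_m,y_m$. I would then confirm the whole formula against Lemma \ref{lem:123} as a consistency check.
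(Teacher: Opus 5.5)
Your proposal is correct and is essentially the paper's own argument. You expand the bracket directly: the $-u_0\otimes\cdots$ terms contract the $i_1$-end of $C_1$ (sitting in slot $1$) against $x_2$ or $x_3$, and the $+v_0\otimes\cdots$ terms amount to the derivation $D(x_1)\otimes x_2\otimes x_3$ applied chord by chord. The paper records this as $(D(x_1)\otimes x_2\otimes x_3)\,C_k=i_k(x_1)\tilde{\otimes}j_k(x_2\otimes x_3)-i_k(x_2\otimes x_3)\tilde{\otimes}j_k(x_1)$. The signs you leave open come out as stated, since $\sum_m\bigl((x_m\cdot u)\,y_m-(y_m\cdot u)\,x_m\bigr)=+u$.

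There is one slip in your consistency check. For $C=\{\{1,2\},\{3,4\},\{5,6\},\{7,8\}\}$ it is the second and third terms that cancel, being $\mp x_1\otimes x_2\otimes x_3\otimes\omega_0\otimes\omega_0\otimes\omega_0$. The first term survives, and $\omega_0\otimes Q$ is the sum of the last three terms.
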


\begin{proof}
By the definition of the bracket, we have
\begin{align*}
[x_1\otimes x_2\otimes x_3, \{C_1\tilde{\otimes}C_2\tilde{\otimes}C_3\tilde{\otimes}C_4\}]=&\,
-x_1\otimes \{\check{i}_1\tilde{\otimes}j_1(x_2) 
\tilde{\otimes}C_2\tilde{\otimes}C_3\tilde{\otimes}C_4\}\otimes x_3\\
&-x_1\otimes x_2\otimes \{\check{i}_1\tilde{\otimes}j_1(x_3) 
\tilde{\otimes}C_2\tilde{\otimes}C_3\tilde{\otimes}C_4\}\\
&+\{i_1(x_1)\tilde{\otimes}j_1(x_2\otimes x_3) 
\tilde{\otimes}C_2\tilde{\otimes}C_3\tilde{\otimes}C_4\}\\
&+\{C_1\tilde{\otimes} (D(x_1)\otimes x_2\otimes x_3)(C_2\tilde{\otimes}C_3\tilde{\otimes}C_4)\},
\end{align*}
where the operator $(D(x_1)\otimes x_2\otimes x_3)$ is defined by
\[
(D(x_1)\otimes x_2\otimes x_3)(u_1\otimes\cdots\otimes u_r)\\
=\sum_{k=1}^r u_1\otimes\cdots\otimes u_{k-1}\otimes (x_1\cdot u_k (x_2\otimes x_3))\otimes u_{k+1}
\otimes\cdots\otimes u_r.
\]
On the other hand, within $H^{\otimes 8}$, we have
\[(D(x_1)\otimes x_2\otimes x_3)\ C_k=
i_k(x_1)\tilde{\otimes}j_k(x_2\otimes x_3)
-i_k(x_2\otimes x_3)\tilde{\otimes}j_k(x_1).\]
Substituting this formula into the preceding expression gives the required result.
\end{proof}

\begin{lem}\label{lem:gs123}
For any permutation $\sigma=\begin{pmatrix}
1 & 2 & 3\\
\sigma_1&\sigma_2&\sigma_3
\end{pmatrix}$, we have the equality
\begin{align*}
[x_{\sigma_1}\otimes x_{\sigma_2}\otimes x_{\sigma_3}, & \,\{C_1
\tilde{\otimes}C_2\tilde{\otimes}C_3\tilde{\otimes}C_4\}]=
-x_{\sigma_1}\otimes \{\check{i}_1\tilde{\otimes}j_1(x_{\sigma_2}) 
\tilde{\otimes}C_2\tilde{\otimes}C_3\tilde{\otimes}C_4\}\otimes x_{\sigma_3}\\
&-x_{\sigma_1}\otimes x_{\sigma_2}\otimes \{\check{i}_1\tilde{\otimes}j_1(x_{\sigma_3}) 
\tilde{\otimes}C_2\tilde{\otimes}C_3\tilde{\otimes}C_4\}\\
&+\{i_1(x_{\sigma_1})\tilde{\otimes}j_1(x_{\sigma_2}\otimes x_{\sigma_3}) 
\tilde{\otimes}C_2\tilde{\otimes}C_3\tilde{\otimes}C_4\}\\
&+\{C_1\tilde{\otimes} \left(i_2(x_{\sigma_1})\tilde{\otimes}j_2(x_{\sigma_2}\otimes x_{\sigma_3})
-i_2(x_{\sigma_2}\otimes x_{\sigma_3})\tilde{\otimes}j_2(x_{\sigma_1})\right)
\tilde{\otimes}C_3 \tilde{\otimes}C_4\}\\
&+\{C_1\tilde{\otimes}C_2\tilde{\otimes} \left(i_3(x_{\sigma_1})\tilde{\otimes}j_3(x_{\sigma_2}\otimes x_{\sigma_3})
-i_3(x_{\sigma_2}\otimes x_{\sigma_3})\tilde{\otimes}j_3(x_{\sigma_1})\right)
 \tilde{\otimes}C_4\}\\
&+\{C_1\tilde{\otimes}C_2 \tilde{\otimes}C_3\tilde{\otimes} 
\left(i_4(x_{\sigma_1})\tilde{\otimes}j_4(x_{\sigma_2}\otimes x_{\sigma_3})
-i_4(x_{\sigma_2}\otimes x_{\sigma_3})\tilde{\otimes}j_4(x_{\sigma_1})\right)
\}.
\end{align*}
\end{lem}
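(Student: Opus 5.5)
The plan is to derive Lemma \ref{lem:gs123} from Lemma \ref{lem:g123} by relabelling, as was done for Lemma \ref{lem:s123}. Redoing the bracket computation with $x_{\sigma_1}\otimes x_{\sigma_2}\otimes x_{\sigma_3}$ in place of $x_1\otimes x_2\otimes x_3$ would give the same result, but relabelling avoids repeating it.

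\textbf{Step 1: the symplectic relabelling.} Let $\phi\in\mathrm{Sp}(H)$ be the map that permutes the symplectic pairs $(x_1,y_1),(x_2,y_2),(x_3,y_3)$ according to $\sigma$, so that $x_k\mapsto x_{\sigma_k}$ and $y_k\mapsto y_{\sigma_k}$ for $k=1,2,3$, and $\phi$ fixes $x_k,y_k$ for $k\ge 4$. Since $\phi$ permutes symplectic pairs, it preserves the intersection form. Hence $\phi(\omega_0)=\omega_0$, and for every chord diagram $C$ we also have $\phi^{\otimes 8}(C_1\tilde{\otimes}C_2\tilde{\otimes}C_3\tilde{\otimes}C_4)=C_1\tilde{\otimes}C_2\tilde{\otimes}C_3\tilde{\otimes}C_4$, because each factor $C_k$ is a copy of $\omega_0$ placed in positions $i_k,j_k$.

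\textbf{Step 2: equivariance of the bracket.} The extended bracket $H^{\otimes(s+1)}\otimes H^{\otimes(t+1)}\to H^{\otimes(s+t)}$ is defined using only tensor operations and the pairings $v_0\cdot u_i$, $u_0\cdot v_j$. It is therefore $\mathrm{Sp}$-equivariant:
\[[\phi u,\phi v]=\phi[u,v].\]
The deletion and insertion notation $\{\check{i}_1\tilde{\otimes}j_1(\,\cdot\,)\cdots\}$ is also compatible with $\phi^{\otimes 9}$. Applying $\phi$ to a term simply substitutes $x_{\sigma_k}$ for $x_k$ in the inserted vectors and leaves the remaining $\omega_0$-factors unchanged.

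\textbf{Step 3: apply $\phi^{\otimes 9}$ to Lemma \ref{lem:g123}.} By Steps 1 and 2, the left-hand side becomes $[x_{\sigma_1}\otimes x_{\sigma_2}\otimes x_{\sigma_3},C_1\tilde{\otimes}C_2\tilde{\otimes}C_3\tilde{\otimes}C_4]$. Each term on the right-hand side becomes the corresponding term with $x_1,x_2,x_3$ replaced by $x_{\sigma_1},x_{\sigma_2},x_{\sigma_3}$. This is exactly the claimed identity.

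\textbf{Main obstacle.} The only delicate point is confirming that the positional notation commutes with $\phi$. This holds because $\phi$ acts diagonally on each tensor factor and does not move positions, so no reordering of factors occurs.
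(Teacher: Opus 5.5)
Your proposal is correct and is essentially the paper's own argument: besides saying the computation of Lemma~\ref{lem:g123} could simply be redone, the paper's proof offers exactly this alternative of applying the index permutation $\sigma$ to that identity. The permutation acts on both the $x_k$ and the $y_k$, so it is symplectic and fixes $\omega_0$; your Steps 1--3 spell out the $\mathrm{Sp}$-equivariance of the extended bracket and the compatibility with the positional notation that the paper's one-line remark leaves implicit.
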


\begin{proof}
Computations similar to those in the proof of Lemma \ref{lem:g123} 
yield the required identity. Alternatively, apply the index permutation $\sigma$ 
to the equality in Lemma \ref{lem:g123}
\end{proof}

\begin{prop}\label{prop:g123om}
For any linear chord diagram $C$ with four chords, 
the element
$$
[x_1\wedge x_2\wedge x_3, C_1\tilde{\otimes}C_2\tilde{\otimes}C_3\tilde{\otimes}C_4]
$$
can be expressed as a linear combination of tensors of the form
$$
\sigma (x_1\wedge x_2\wedge x_3\otimes\omega_0\otimes\omega_0\otimes\omega_0),
$$
where $\sigma$ permutes the nine tensor factors.
In fact, we have the following equality.
\begin{align*}
&\hspace{10mm}[x_1\wedge x_2\wedge x_3, C_1\tilde{\otimes}C_2\tilde{\otimes}C_3\tilde{\otimes}C_4]=\\
&-x_1\wedge x_2\wedge x_3^{(1,j_1,9)}
\tilde{\otimes}\{C_2\tilde{\otimes}C_3\tilde{\otimes}C_4\}^{(\mathrm comp)}\\
&-x_1\wedge x_2\wedge x_3^{(1,2,j_1+1)}
\tilde{\otimes}\{C_2\tilde{\otimes}C_3\tilde{\otimes}C_4\}^{(\mathrm comp)}
+x_1\wedge x_2\wedge x_3^{(1,j_1,j_1+1)}
\tilde{\otimes}\{C_2\tilde{\otimes}C_3\tilde{\otimes}C_4\}^{(\mathrm comp)}\\
&+x_1\wedge x_2\wedge x_3^{(i_2,j_2,j_2+1)}
\tilde{\otimes}\{C_1\tilde{\otimes}C_3\tilde{\otimes}C_4\}^{(\mathrm comp)}
-x_1\wedge x_2\wedge x_3^{(i_2,i_2+1,j_2+1)}
\tilde{\otimes}\{C_1\tilde{\otimes}C_3\tilde{\otimes}C_4\}^{(\mathrm comp)}\\
&+x_1\wedge x_2\wedge x_3^{(i_3,j_3,j_3+1)}
\tilde{\otimes}\{C_1\tilde{\otimes}C_2\tilde{\otimes}C_4\}^{(\mathrm comp)}
-x_1\wedge x_2\wedge x_3^{(i_3,i_3+1,j_3+1)}
\tilde{\otimes}\{C_1\tilde{\otimes}C_2\tilde{\otimes}C_4\}^{(\mathrm comp)}\\
&+x_1\wedge x_2\wedge x_3^{(i_4,j_4,j_4+1)}
\tilde{\otimes}\{C_1\tilde{\otimes}C_2\tilde{\otimes}C_3\}^{(\mathrm comp)}
-x_1\wedge x_2\wedge x_3^{(i_4,i_4+1,j_4+1)}
\tilde{\otimes}\{C_1\tilde{\otimes}C_2\tilde{\otimes}C_3\}^{(\mathrm comp)}.\\
\end{align*}
Here $x_1\wedge x_2\wedge x_3^{(i,j,k)}\ (1\leq i<j<k\leq 9)$ means that 
we place the three tensor factors of $x_1\wedge x_2\wedge x_3$ in positions $i,j,k$ of $H^{\otimes9}$, 
and 
$\{C_l\tilde{\otimes}C_m\tilde{\otimes}C_n\}^{(\mathrm comp)} \ (1\leq l<m<n\leq 4)$ 
denotes the tensor obtained by placing 
$\widetilde C_l \tilde{\otimes} C_m \tilde{\otimes} C_n$ in the six positions complementary to $i,j,k$, 
that is, in the positions $\{1,\ldots,9\}\setminus\{i,j,k\}$.
\end{prop}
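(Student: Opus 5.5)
The plan is to repeat the argument of Proposition \ref{prop:123om}, now using Lemmas \ref{lem:g123} and \ref{lem:gs123} in place of Lemmas \ref{lem:123} and \ref{lem:s123}.

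First expand $x_1\wedge x_2\wedge x_3$ as the signed sum of the six tensors $x_{\sigma_1}\otimes x_{\sigma_2}\otimes x_{\sigma_3}$. Apply Lemma \ref{lem:gs123} to each of them and sum with signs $\mathrm{sgn}(\sigma)$. In every term produced by the lemma, the three letters $x_{\sigma_1},x_{\sigma_2},x_{\sigma_3}$ sit in three definite positions of $H^{\otimes 9}$. The other six positions carry the three surviving chords, with their relative order kept. Because $\omega_0$ is invariant under permuting basis indices, this placement data is the same for all $\sigma$. So I would organize the sum by \emph{position pattern}: for a fixed triple of positions $(i,j,k)$, I group the terms in which $x_{\sigma_1}$ lands in a given slot, $x_{\sigma_2}$ in another, and $x_{\sigma_3}$ in the third.

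Next, read off the positions term by term. Throughout, ``positions'' means positions after the deletion and insertion in $H^{\otimes 9}$, with the outer $x$'s prepended or appended.
\begin{itemize}
\item The first term of the lemma has $x_{\sigma_1}$ at position $1$, $x_{\sigma_2}$ at position $j_1$, and $x_{\sigma_3}$ at position $9$. Deleting $i_1=1$ shifts later slots down by one, and prepending $x_{\sigma_1}$ shifts them back up.
\item The second term has the letters at $1,2,j_1+1$.
\item The third term has them at $1,j_1,j_1+1$, since $i_1=1$.
\item For $k=2,3,4$, the piece $i_k(x_{\sigma_1})\tilde\otimes j_k(x_{\sigma_2}\otimes x_{\sigma_3})$ puts the letters at $(i_k,j_k,j_k+1)$, in the order $\sigma_1,\sigma_2,\sigma_3$.
\item The piece $-i_k(x_{\sigma_2}\otimes x_{\sigma_3})\tilde\otimes j_k(x_{\sigma_1})$ puts them at $(i_k,i_k+1,j_k+1)$, in the cyclically rotated order $\sigma_2,\sigma_3,\sigma_1$.
\end{itemize}
A cyclic rotation of three letters is an even permutation. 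So after the signed sum over $\sigma$, each group becomes $\pm\,x_1\wedge x_2\wedge x_3^{(i,j,k)}$ tensored with the complementary chords in their original order. The sign comes directly from the sign in front of the corresponding term of the lemma.

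This yields exactly the nine terms of the displayed formula. The remaining check is that every term of Lemma \ref{lem:g123} has been used exactly once, so there is no leftover cancellation to verify. This differs from Proposition \ref{prop:123om}: there the cancelling terms came from specializing the chords to $C=\omega_0^{\otimes4}$, where several position patterns coincide. In the general statement those coincident patterns are simply listed as separate summands.

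The main obstacle is the index bookkeeping: computing the final positions after a deletion and an insertion, and making sure that the chord remaining in position order matches the notation $\{\cdot\}^{(\mathrm{comp})}$. I would handle this by checking the $C=\omega_0^{\otimes 4}$ case against Proposition \ref{prop:123om}. In that case $j_1=2$, $(i_2,j_2)=(3,4)$, and so on, and the terms should cancel pairwise, for example $(1,2,3)$ appearing with opposite signs. Only $-x_1\wedge x_2\wedge x_3^{(1,2,9)}$ should survive. Recovering Proposition \ref{prop:123om} in this way confirms both the positions and the signs.
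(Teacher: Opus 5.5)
Your proposal is correct and is essentially the paper's proof. You expand $x_1\wedge x_2\wedge x_3$ into its six signed monomials, apply Lemma~\ref{lem:gs123} to each, and sum term by term, so that each of the nine terms of the lemma yields exactly one term of the formula; the $-i_k(x_{\sigma_2}\otimes x_{\sigma_3})\tilde{\otimes}j_k(x_{\sigma_1})$ pieces need only the fact that a cyclic rotation of three letters is even. Your explicit position bookkeeping, and your check that the case $C=\omega_0^{\otimes 4}$ recovers Proposition~\ref{prop:123om}, fill in details the paper leaves to the reader.
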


\begin{proof}
Recall again that the element $x_1\wedge x_2\wedge x_3$ is expressed as 
\begin{align*}
x_1\wedge x_2\wedge x_3&=x_1\otimes x_2\otimes x_3+x_2\otimes x_3\otimes x_1+x_3\otimes x_1\otimes x_2\\
&\quad-x_2\otimes x_1\otimes x_3-x_3\otimes x_2\otimes x_1-x_1\otimes x_3\otimes x_2
\end{align*}
consisting of $6$ components. 
Summing the first three terms on the right-hand sides of 
Lemmas \ref{lem:g123} and \ref{lem:gs123} over the six terms in the above expansion, 
we obtain the first three terms in the stated formula.
Similarly, though a little bit more complicated,  
summing the remaining six terms of the right-hand side of
Lemmas \ref{lem:g123} and \ref{lem:gs123} over the six terms in the above expansion, 
we obtain exactly the remaining six terms of the expression of the claim. 
This completes the proof.
\end{proof}

\begin{prop}\label{prop:g12yom}
For any linear chord diagram $C$ of $4$-chords, 
the element
$$
[x_1\wedge x_2\wedge y_2, C_1\tilde{\otimes}C_2\tilde{\otimes}C_3\tilde{\otimes}C_4]
$$
can be expressed as a linear combination of the form
$$
\sigma (x_1\wedge x_2\wedge y_2\otimes\omega_0\otimes\omega_0\otimes\omega_0)
$$
with various permutations $\sigma$ of $9$-words which act naturally on $H^{\otimes 9}$. 
In fact, we have the following equality.
\begin{align*}
&\hspace{10mm}[x_1\wedge x_2\wedge y_2, C_1\tilde{\otimes}C_2\tilde{\otimes}C_3\tilde{\otimes}C_4]=\\
&-x_1\wedge x_2\wedge y_2^{(1,j_1,9)}
\tilde{\otimes}\{C_2\tilde{\otimes}C_3\tilde{\otimes}C_4\}^{(\mathrm comp)}\\
&-x_1\wedge x_2\wedge y_2^{(1,2,j_1+1)}
\tilde{\otimes}\{C_2\tilde{\otimes}C_3\tilde{\otimes}C_4\}^{(\mathrm comp)}
+x_1\wedge x_2\wedge y_2^{(1,j_1,j_1+1)}
\tilde{\otimes}\{C_2\tilde{\otimes}C_3\tilde{\otimes}C_4\}^{(\mathrm comp)}\\
&+x_1\wedge x_2\wedge y_2^{(i_2,j_2,j_2+1)}
\tilde{\otimes}\{C_1\tilde{\otimes}C_3\tilde{\otimes}C_4\}^{(\mathrm comp)}
-x_1\wedge x_2\wedge y_2^{(i_2,i_2+1,j_2+1)}
\tilde{\otimes}\{C_1\tilde{\otimes}C_3\tilde{\otimes}C_4\}^{(\mathrm comp)}\\
&+x_1\wedge x_2\wedge y_2^{(i_3,j_3,j_3+1)}
\tilde{\otimes}\{C_1\tilde{\otimes}C_2\tilde{\otimes}C_4\}^{(\mathrm comp)}
-x_1\wedge x_2\wedge y_2^{(i_3,i_3+1,j_3+1)}
\tilde{\otimes}\{C_1\tilde{\otimes}C_2\tilde{\otimes}C_4\}^{(\mathrm comp)}\\
&+x_1\wedge x_2\wedge y_2^{(i_4,j_4,j_4+1)}
\tilde{\otimes}\{C_1\tilde{\otimes}C_2\tilde{\otimes}C_3\}^{(\mathrm comp)}
-x_1\wedge x_2\wedge y_2^{(i_4,i_4+1,j_4+1)}
\tilde{\otimes}\{C_1\tilde{\otimes}C_2\tilde{\otimes}C_3\}^{(\mathrm comp)}.\\
\end{align*}
Here $x_1\wedge x_2\wedge y_2^{(i,j,k)}\ (1\leq i<j<k\leq 9)$ is defined similarly to 
the one in Proposition $\ref{prop:g123om}$.
\end{prop}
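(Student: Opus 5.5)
The plan is to deduce Proposition \ref{prop:g12yom} from Proposition \ref{prop:g123om} by an $\mathrm{Sp}$-equivariance and multilinearity argument, instead of redoing the bookkeeping of Lemmas \ref{lem:g123} and \ref{lem:gs123}. The key observation is this: in the proofs of Lemma \ref{lem:g123}, Lemma \ref{lem:gs123} and Proposition \ref{prop:g123om}, the only place the specific vectors $x_1,x_2,x_3$ enter is through the contractions $(x_a\cdot u)$ with the factors $u$ of the chord tensors $C_k=\omega_0(k)$. These contractions are always resolved by the identity
\[
\sum_{i}\bigl((z\cdot x_i)\,y_i-(z\cdot y_i)\,x_i\bigr)=\pm z \qquad (z\in H),
\]
which says that contracting one leg of $\omega_0$ with $z$ reproduces $z$ in the other leg. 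No contraction between two of the three vectors themselves ever occurs.

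First I would restate the bracket formula with $x_1\otimes x_2\otimes x_3$ replaced by an arbitrary $z_1\otimes z_2\otimes z_3$, $z_a\in H$. The derivation in Lemma \ref{lem:g123} goes through word for word, since it uses only the definition of the extended bracket and the formula
\[
(D(z_1)\otimes z_2\otimes z_3)\,C_k=i_k(z_1)\tilde{\otimes}j_k(z_2\otimes z_3)-i_k(z_2\otimes z_3)\tilde{\otimes}j_k(z_1),
\]
which is the contraction identity above. I should, however, check the third term of Lemma \ref{lem:g123}. There the $v_0$-leg of $C_1$ is contracted with $u_0=z_1$, and the pairing $(z_1\cdot z_a)$ does not appear. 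The same holds for the permuted versions in Lemma \ref{lem:gs123}. So the resulting formula is trilinear in $(z_1,z_2,z_3)$ and contains no pairings $z_a\cdot z_b$.

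Next I antisymmetrize over the six permutations, exactly as in the proof of Proposition \ref{prop:g123om}. The cancellations used there are purely combinatorial identities among placements of the tensor factors and do not depend on the vectors. This gives the displayed formula with $z_1\wedge z_2\wedge z_3$ in place of $x_1\wedge x_2\wedge x_3$. Specializing to $(z_1,z_2,z_3)=(x_1,x_2,y_2)$ yields the stated equality, and also the first claim that the bracket is a combination of $\sigma(x_1\wedge x_2\wedge y_2\otimes\omega_0^{\otimes3})$.

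The main obstacle is the one place where the two cases could genuinely differ. This happens when the extended bracket produces a contraction between the two arguments' ``$0$-th'' legs in a way that pairs $z_a$ with $z_b$. For $x_1,x_2,x_3$ every such pairing is $0$, so it might have been silently dropped, whereas $x_2\cdot y_2=1$. I would therefore check against the bracket definition that every pairing involves exactly one $u$-factor and one $v$-factor. Since the $v$'s are legs of the $C_k$, no pairing of the form $(z_a\cdot z_b)$ can arise, and the passage from $x_3$ to $y_2$ is legitimate.

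As an alternative I would also note the abstract argument. The set $\{x_1,x_2,x_3\}$ is isotropic, so its $\mathrm{Sp}$-orbit of ordered triples does not contain $(x_1,x_2,y_2)$. This is why one cannot simply apply $\mathrm{Sp}$-equivariance and an element of $\mathrm{Sp}(H)$, and why the general trilinear formula is needed.
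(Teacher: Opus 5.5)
Your proposal is correct and is essentially the paper's argument. The paper only says that the computations of Lemmas \ref{lem:g123}, \ref{lem:gs123} and Proposition \ref{prop:g123om} go through with $y_2$ in place of $x_3$; your trilinear version in $(z_1,z_2,z_3)$ justifies exactly that, via the check that the extended bracket only ever pairs a factor of the first argument with a leg of some $C_k$, so $x_2\cdot y_2=1$ never enters. (A small slip: in the third term of Lemma \ref{lem:g123} it is the $j_1$-leg of $C_1$, not the $v_0$-leg, that is paired with $z_1$, but this does not affect the argument.)
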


\begin{proof}
Computations similar to those in the proof of Proposition \ref{prop:g123om}, 
using the corresponding versions of Lemmas \ref{lem:g123} and \ref{lem:gs123}, 
yield the stated formula.
\end{proof}

\begin{remark}
The conclusion of Proposition \ref{prop:g12yom} remains valid
if $x_1\wedge x_2\wedge y_2$ is replaced by $x_1\wedge\omega_0$, 
which is a highest weight vector of $[1]\subset \mathfrak{h}_{g,1}(1)$. 
\label{remark:xom}
\end{remark}

\begin{lem}
$\mathrm{(i)}\ $
Let 
$
C_{34}: H^{\otimes 3}\otimes H^{\otimes 2}=H^{\otimes 5}\rightarrow H^{\otimes 3}
$
be the homomorphism defined by 
$$
C_{34}(u_1\otimes u_2\otimes u_3\otimes u_4\otimes u_5)=
(u_3\cdot u_4)\, u_1\otimes u_2\otimes  u_5.
$$
Then 
\begin{align*}
C_{34}\left( (x_1\wedge x_2\wedge x_3)\otimes\omega_0\right)&=-x_1\wedge x_2\wedge x_3,\\
C_{34}\left( (x_1\wedge x_2\wedge y_2)\otimes\omega_0\right)&=-x_1\wedge x_2\wedge y_2,\\
C_{34}\left( (x_1\wedge \omega_0)\otimes\omega_0\right)&=-x_1\wedge \omega_0.
\end{align*}

$\mathrm{(ii)}\ $
Let 
$
C_{23}: H^{\otimes 2}\otimes H^{\otimes 2}=H^{\otimes 4}\rightarrow H^{\otimes 2}
$
be the homomorphism defined by 
$$
C_{23}(u_1\otimes u_2\otimes u_3\otimes u_4)=
(u_2\cdot u_3)\, u_1\otimes u_4.
$$
Then $C_{23}\left( \omega_0\otimes\omega_0\right)=-\omega_0$.
\label{lem:C234}
\end{lem}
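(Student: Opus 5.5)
This is a direct computation from the definitions, carried out with a symplectic basis. For (i), I will write $C_{34}(a\otimes\omega_0)$ for an arbitrary $a=u_1\otimes u_2\otimes u_3\in H^{\otimes 3}$. Expanding
\[
\omega_0=\sum_{k=1}^g (x_k\otimes y_k-y_k\otimes x_k)
\]
gives
\[
C_{34}(a\otimes\omega_0)=\sum_k \bigl((u_3\cdot x_k)\,u_1\otimes u_2\otimes y_k-(u_3\cdot y_k)\,u_1\otimes u_2\otimes x_k\bigr).
\]
The key step is the identity
\[
\sum_k \bigl((u\cdot x_k)y_k-(u\cdot y_k)x_k\bigr)=-u \qquad (u\in H),
\]
which I check on basis vectors. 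Take $x_k\cdot y_k=1$ and $y_k\cdot x_k=-1$ as the normalization implicit in the paper's $\omega_0$. For $u=x_j$, only the second sum contributes, giving $-(x_j\cdot y_j)x_j=-x_j$. For $u=y_j$, only the first sum contributes, giving $(y_j\cdot x_j)y_j=-y_j$.

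With this identity, $C_{34}(a\otimes\omega_0)=-a$ for every $a\in H^{\otimes3}$, because $C_{34}$ acts only on the third factor of $a$ together with $\omega_0$. By linearity, the three formulas in (i) follow at once for $x_1\wedge x_2\wedge x_3$, $x_1\wedge x_2\wedge y_2$ and $x_1\wedge\omega_0$, each viewed as an element of $H^{\otimes 3}$.

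For (ii), apply the same identity to the first copy of $\omega_0$, written as $\sum_k(x_k\otimes y_k-y_k\otimes x_k)$. Contracting its second factor with the second $\omega_0$ turns each $y_k$ into $-y_k$ in the remaining position, and likewise each $x_k$ into $-x_k$. Concretely,
\[
C_{23}(\omega_0\otimes\omega_0)=\sum_k\bigl(x_k\otimes(-y_k)-y_k\otimes(-x_k)\bigr)=-\omega_0 .
\]

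There is no real obstacle here. The only point requiring care is the sign convention for the intersection pairing, $x_i\cdot y_j=\delta_{ij}$, and I will fix it to agree with the expression of $\omega_0$ in $H^{\otimes 2}$ given in the introduction. The overall sign $-1$ in all four formulas is exactly a consequence of that convention.
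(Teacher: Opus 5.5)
Your proof is correct and is essentially the paper's argument. The paper also reduces everything to the one-factor contraction $C_{12}(u\otimes\omega_0)=-u$, checked on $u=x_i,y_i$ with the convention $x_i\cdot y_i=1$, and then says the stated equalities follow immediately; you just make explicit that $C_{34}(a\otimes\omega_0)=-a$ for every $a\in H^{\otimes 3}$ (and the analogous factorwise statement for $C_{23}$), which is the step the paper leaves implicit.
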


\begin{proof}
Let $C_{12}: H\otimes H^{\otimes 2}=H^{\otimes 3}\rightarrow H$
be the homomorphism defined by 
$$
C_{12}(u_1\otimes u_2\otimes u_3)=
(u_1\cdot u_2)\, u_3.
$$
Then it is easy to see that the equalities
$$
C_{12}(x_i\otimes \omega_0)=-x_i,\quad C_{12}(y_i\otimes \omega_0)=-y_i
$$
hold for any $i$. This is because
\begin{align*}
C_{12}(x_i\otimes \omega_0)&=C_{12}\left(x_i\otimes (x_1\otimes y_1-y_1\otimes x_1
+\cdots +x_g\otimes y_g-y_g\otimes x_g)\right)=-x_i\\
C_{12}(y_i\otimes \omega_0)&=C_{12}\left(y_i\otimes (x_1\otimes y_1-y_1\otimes x_1
+\cdots +x_g\otimes y_g-y_g\otimes x_g)\right)=-y_i.
\end{align*}
The stated equalities follow immediately.
\end{proof}

\noindent
\begin{proof}[Proof of Proposition $\ref{prop:polynomial}$] 
Every element 
$$
v\in\mathfrak{h}_{g,1}(6)^{\mathrm{Sp}}\subset H^{\otimes 8}
$$ 
can be expressed as a linear combination of permutations of the tensor product 
$\omega_0\otimes\omega_0\otimes\omega_0\otimes\omega_0$ 
corresponding to various linear chord diagrams with four chords.
Moreover, as already mentioned, there is a direct sum decomposition
$\mathfrak{h}_{g,1}(1)\cong [1]\oplus [1^3]$, 
so that the bracket $[\mathfrak{h}_{g,1}(1),\mathfrak{h}_{g,1}(6)^{\mathrm{Sp}}]$
can also be decomposed into copies of 
$[1]$ and $[1^3]$.
Each of our homomorphisms $f_i, w_i, r$ and $s$ is an $\mathrm{Sp}$-equivariant projection from
$[\mathfrak{h}_{g,1}(1),\mathfrak{h}_{g,1}(6)^{\mathrm{Sp}}]$ 
to a specific summand isomorphic to $[1]$ or $[1^3]$.

Now by Proposition \ref{prop:g123om}, for the case $[1^3]$, and
Proposition \ref{prop:g12yom} (see also Remarks \ref{remark:1} and \ref{remark:xom}), 
for the case $[1]$, each of the following bracket operations 
$$
[x_1\wedge x_2\wedge x_3, C_1\tilde{\otimes}C_2\tilde{\otimes}C_3\tilde{\otimes}C_4]\
\text{and}\
[x_1\wedge x_2\wedge y_2, C_1\tilde{\otimes}C_2\tilde{\otimes}C_3\tilde{\otimes}C_4]
$$
is a linear combination of tensors of one of the following forms: 
$$
\sigma (x_1\wedge x_2\wedge x_3\otimes\omega_0\otimes\omega_0\otimes\omega_0)\
\text{and}\
\sigma (x_1\wedge x_2\wedge y_2\otimes\omega_0\otimes\omega_0\otimes\omega_0)
$$
with various permutations $\sigma$ of the nine tensor factors. 
The point here is that the result does not depend on the genus $g \geq 3$.
Our homomorphisms are defined in terms of various multi-contractions of these tensors. 
Their values are determined by the contraction formulae in Lemma \ref{lem:C234}.
Finally, any threefold multi-contraction of the tensor product
$\omega_0\otimes \omega_0\otimes \omega_0$ is a monomial in $g$ 
of degree at most $3$ (see \cite{morita99}).
The claim of polynomiality follows from this. 
\end{proof}

\section{Description of the first Galois obstruction}\label{sec:firstGal}

In this section, we prove our main results, Theorems \ref{th:main} and \ref{th:es}.

Our task is to find an element in $\mathfrak{h}_{g,1}(6)^{\mathrm{Sp}}$ that 
normalizes the Johnson image, namely an element $v$ satisfying 
$[\mathfrak{h}_{g,1}(1), v] \subset \Im \tau_{g,1}(7)$. 

\begin{prop}
For any element $v\in \mathfrak{h}_{g,1}(6)^{\mathrm{Sp}}$,
the following hold.

\noindent
$\mathrm{(i)}\ $ $[\mathfrak{h}_{g,1}(1),v]\subset \Ker ES_{7}$ if and only if $f_1(v)=0$.

\noindent
$\mathrm{(ii)}\ $  Assuming condition $\mathrm{(i)}$, 
$[\mathfrak{h}_{g,1}(1), v]\subset \mathrm{Im}\tau_{g,1}(7)$ if and only if $r(v)=0$ and $s(v)=0$.
\label{prop:n}
\end{prop}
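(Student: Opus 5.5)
The plan is to reduce both statements to highest weight vectors. Since $v$ is $\mathrm{Sp}$-invariant, the map $\mathfrak{h}_{g,1}(1)\to\mathfrak{h}_{g,1}(7)$, $\xi\mapsto[\xi,v]$, is $\mathrm{Sp}$-equivariant. Its image is therefore a quotient of $\mathfrak{h}_{g,1}(1)=[1^3]\oplus[1]$, and it lies in the $([1]\oplus[1^3])$-isotypical part $\mathfrak{h}_{g,1}(7)_1$. An equivariant map from an irreducible module is zero as soon as it kills a highest weight vector. Also, $\Ker ES_7$ and $\Im\tau_{g,1}(7)$ are $\mathrm{Sp}$-submodules. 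Hence $[\mathfrak{h}_{g,1}(1),v]$ lies in one of these submodules if and only if both $[x_1\wedge x_2\wedge x_3,v]$ and $[x_1\wedge\omega_0,v]$ do. By Remark \ref{remark:1}, the second condition can be tested with $x_1\wedge x_2\wedge y_2$ in place of $x_1\wedge\omega_0$, at least when computing coefficients.

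For (i), the first exact sequence of Proposition \ref{prop:str1} gives $ES_7(\mathfrak{h}_{g,1}(7)_1)\cong 3[1]\oplus 2[1^3]$. This image is detected by the five functionals $W_1,W_2,F_1,F_2,F_3$, with the $W_i$ detecting the two $[1^3]$'s and the $F_i$ the three $[1]$'s. Each $W_i\circ ES_7$ and $F_i\circ ES_7$ is an $\mathrm{Sp}$-map to an irreducible module. Applied to $[x_1\wedge x_2\wedge x_3,v]$ (a highest weight vector of weight $[1^3]$), or to the highest weight image of $x_1\wedge\omega_0$, it produces a scalar multiple of the highest weight vector, and that scalar is exactly $w_i(v)$ or $f_i(v)$ (up to the factor $g-1$). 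Thus $[\mathfrak{h}_{g,1}(1),v]\subset\Ker ES_7$ if and only if $w_1(v)=w_2(v)=f_1(v)=f_2(v)=f_3(v)=0$.

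By Proposition \ref{prop:poly}, $f_2=f_3=w_1=0$ identically. I then compare the polynomials in Tables \ref{tab:rw} and \ref{tab:sf} basis vector by basis vector:
\[w_2(v_i)=-\tfrac{2g-1}{2(2g+1)}\,f_1(v_i)\qquad (i=1,\dots,5).\]
For example, on $v_1$ we have $12(g+1)(2g-1)(2g+3)$ against $-24(g+1)(2g+1)(2g+3)$. By linearity $w_2$ is a nonzero multiple of $f_1$, so all five conditions collapse to $f_1(v)=0$. This proves (i).

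For (ii), assume $[\mathfrak{h}_{g,1}(1),v]\subset(\Ker ES_7)_1$. By the second exact sequence of Proposition \ref{prop:str1}, $(R,S)$ restricted to $(\Ker ES_7)_1$ has kernel exactly $(\Im\tau_{g,1}(7))_1$. Hence the containment in $\Im\tau_{g,1}(7)$ holds if and only if $R$ and $S$ vanish on the images of both highest weight vectors. Because $R$ maps onto $[1]$ and $S$ onto $[1^3]$, equivariance and weight considerations show:
\begin{itemize}
\item only $R([x_1\wedge\omega_0,v])$ and $S([x_1\wedge x_2\wedge x_3,v])$ can be nonzero, since the cross terms vanish by Schur's lemma;
\item each of these is a multiple of the respective highest weight vector, with coefficients $(g-1)r(v)$ and $s(v)$.
\end{itemize}
The factor $g-1$ is nonzero for $g\ge2$, so the condition is $r(v)=s(v)=0$.

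The main obstacle is justifying that the image of $ES_7$ on the $([1]\oplus[1^3])$-part is really detected by the chosen five functionals, and likewise for $R$ and $S$. These facts come from the computer-based structure results in Proposition \ref{prop:str1} and Proposition \ref{prop:rs}, which I take as given. One further point: the case $g=2$, where the $[1^3]$ summand of $\mathfrak{h}_{2,1}(1)$ is absent, needs separate remarks.
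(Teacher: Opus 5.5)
Your proposal is correct and is essentially the paper's proof. Part (i) rests on the five detectors $W_1,W_2,F_1,F_2,F_3$, the identities $w_1=f_2=f_3=0$, and the proportionality $(2g-1)f_1=-2(2g+1)w_2$ read off from Tables~\ref{tab:rw} and~\ref{tab:sf}; part (ii) rests on the second exact sequence of Proposition~\ref{prop:str1}. The only difference is that you spell out the highest-weight and Schur reduction, and the factor $g-1$ from Remark~\ref{remark:1}, which the paper leaves implicit in ``by Definition~\ref{def:homo}''. One small imprecision: $W_i$ lands in $\wedge^3H=[1^3]\oplus[1]$, which is not irreducible. Taking the coefficient of $x_1\wedge x_2\wedge x_3$ isolates the $[1^3]$-part, so the argument still goes through.
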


\begin{proof} 
$\mathrm{(i)}$ As mentioned in Section \ref{sec:1_13}, the Enomoto-Satoh
trace map $ES_7$ is detected by the five homomorphisms $W_1,W_2,F_1,F_2$ and $F_3$.
Therefore, by Definition \ref{def:homo}, $v$ satisfies the required condition if
and only if all five homomorphisms $w_1,w_2,f_1,f_2$ and $f_3$ vanish on it.
By Proposition \ref{prop:poly}, the homomorphisms $w_1, f_2$ and $f_3$ vanish 
identically and
$(2g-1)f_1=-2 (2g+1)w_2$ (see Tables \ref{tab:rw} and \ref{tab:sf}).
The claim follows. 

$\mathrm{(ii)}$ This assertion follows from the second exact sequence in 
Proposition \ref{prop:str1} and the definitions of the homomorphisms $r$ and $s$. 

Here we remark that, in the case $g=2$, only the homomorphisms $f_1$ and $r$ are defined 
while $w_2$ and $s$ are not. However, the above argument is also valid 
in this case with a minor modification.
\end{proof}

Now we consider the extra element
\begin{align*}
\varepsilon&=\frac{2v_1}{2g(2g+1)(2g+2)(2g+3)}\\
&+\frac{8v_3}{(2g-2)2g(2g+1)(2g+2)}+\frac{3v_4}{(2g-2)(2g-1)2g(2g+1)}\in \Ker  ES_6
\end{align*}
defined in Section \ref{in}. It is known that the kernel of the Enomoto-Satoh trace map is a Lie 
subalgebra of $\mathfrak{h}_{g,1}$, and hence $ES_7([\mathfrak{h}_{g,1}(1),\varepsilon])=0$.
It follows that 
$$
f_1(\varepsilon)=0.
$$
We can check this directly by the polynomial of $f_1$ at $\varepsilon$ given in Table \ref{tab:sf}.
Thus $\varepsilon$ satisfies the condition $\mathrm{(i)}$ of Proposition \ref{prop:n}.

Next we compute $r(\varepsilon)$.

\begin{prop}
The value of the homomorphism $r$ on the extra element
is given by
$$
r(\varepsilon)=3 \frac{2g+1}{(g-1)g}
$$
\label{prop:re}
\end{prop}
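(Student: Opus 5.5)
The plan is to compute $r(\varepsilon)$ by linearity from the polynomials of $r$ at $v_1,v_3,v_4$ in Table~\ref{tab:rw}. By Proposition~\ref{prop:poly}, which rests on the polynomiality of Proposition~\ref{prop:polynomial} and the data for $g=3,\dots,7$, we have for every $g\ge 3$
\[
r(v_1)=-24(g+1)(2g+1)(2g+3),\quad r(v_3)=-36(g+1)(2g+1),\quad r(v_4)=12(2g-1)(2g+1)(2g+3).
\]
These are the only values needed, since the extra element $\varepsilon$ has no $v_2$ or $v_5$ component.

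Next I substitute these into the expression of $\varepsilon$ in terms of $v_1,v_3,v_4$ and simplify each term, writing $2g+2=2(g+1)$ and $2g-2=2(g-1)$ so that the factors cancel visibly:
\begin{align*}
\frac{2\,r(v_1)}{2g(2g+1)(2g+2)(2g+3)}&=-\frac{12}{g},\\
\frac{8\,r(v_3)}{(2g-2)2g(2g+1)(2g+2)}&=-\frac{36}{g(g-1)},\\
\frac{3\,r(v_4)}{(2g-2)(2g-1)2g(2g+1)}&=\frac{9(2g+3)}{g(g-1)}.
\end{align*}
Over the common denominator $g(g-1)$ the numerator is $-12(g-1)-36+9(2g+3)=6g+3=3(2g+1)$. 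This gives $r(\varepsilon)=3(2g+1)/((g-1)g)$, as claimed.

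The only delicate point is the range of validity. The polynomial formulas were derived from the data for $g\ge 3$, and Remark~\ref{rem:inv_stable} identifies the $v_i$ across genera. For $g=2$, the function $r$ is still defined, via $x_1\wedge x_2\wedge y_2$, as noted after Proposition~\ref{prop:n}, and $v_5=0$ there. The polynomiality argument in the proof of Proposition~\ref{prop:polynomial} uses only the contraction formulae of Lemma~\ref{lem:C234} and the fact that multi-contractions of $\omega_0^{\otimes 3}$ are monomials in $g$; this argument is formally independent of $g$. So I would argue that the same polynomial identities hold at $g=2$, applied to $v_1,v_3,v_4$ as chord-diagram tensors.

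If that argument is not airtight, I would instead verify $r(v_1),r(v_3),r(v_4)$ at $g=2$ by a direct computer evaluation, as was done for Tables~\ref{tab:g3}--\ref{tab:g7}. This check is the main obstacle; the rest is routine arithmetic.
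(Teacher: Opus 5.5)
Your proposal is correct and is essentially the paper's proof: the paper also substitutes the tabulated polynomials $r(v_1)$, $r(v_3)$, $r(v_4)$ into $\varepsilon$ by linearity. The only cosmetic difference is that the paper first multiplies $\varepsilon$ by the common denominator $2g(2g+1)(2g-2)(2g-1)(2g+2)(2g+3)$, obtaining $24(2g-1)(2g+1)^2(2g+3)(g+1)$, whereas you simplify the three terms separately. Your caveat about $g=2$ goes beyond what the paper's proof addresses, since the paper tacitly uses the same polynomial formulas there, and your justification via the genus-independence of the contraction argument is the natural one.
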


\begin{proof}
By Table \ref{tab:rw}, we see that 
\begin{align*}
r(v_1)&=-24(g+1)(2g+1)(2g+3),\ r(v_3)=-36(g+1)(2g+1), \\
r(v_4)&=12(2g-1)(2g+1)(2g+3), 
\end{align*}
so that
\begin{align*}
&r(2g(2g+1)(2g-2)(2g-1)(2g+2)(2g+3)\varepsilon)\\
&=r(2(2g-2)(2g-1)v_1+8(2g-1)(2g+3)v_3+3(2g+2)(2g+3)v_4)\\
&=2(2g-2)(2g-1)\cdot (-24)(g+1)(2g+1)(2g+3)\\
&\quad +8(2g-1)(2g+3)\cdot (-36)(g+1)(2g+1)\\
&\quad +3(2g+2)(2g+3)\cdot 12(2g-1)(2g+1)(2g+3)\\
&=(2g-1)(2g+1)(2g+3)(g+1)\{-2\cdot 24 (2g-2)-8\cdot 36+6\cdot 12 (2g+3)\}\\
&=24 (2g-1)(2g+1)^2(2g+3)(g+1).
\end{align*}
Hence
$$
r(\varepsilon)=3 \frac{2g+1}{(g-1)g}.
$$
This completes the proof. 
\end{proof}

Thus we have confirmed that the extra element $\varepsilon$ does not normalize the
Johnson image while its projection to $\mathfrak{h}_{g,*}(6)$ does.
Our next task is to find a ``correction term" to add to $\varepsilon$ so that the resulting element  
normalizes the Johnson image. 
In view of Proposition \ref{prop:n}, we seek such an element
which should be contained in $\mathfrak{j}_{g,1}(6)$.

\begin{prop}
$f_1(2j_1-3j_2)=0$, and hence $[\mathfrak{h}_{g,1}(1),2j_1-3j_2]$ is contained in $\Ker  ES_{7}$.
\end{prop}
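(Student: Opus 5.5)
The plan is a direct computation from Proposition \ref{prop:poly}, followed by an application of Proposition \ref{prop:n}(i). Since $f_1$ is linear on $\mathfrak{h}_{g,1}(6)^{\mathrm{Sp}}$, I first expand $2j_1-3j_2$ in the basis $v_1,\dots,v_5$, using the expressions for $j_1$ and $j_2$ from Section \ref{sec:results}:
\[
2j_1-3j_2=-9v_1-7v_2-19v_3-5v_4+10v_5 .
\]
This is exactly the element $j$ of Theorem \ref{th:main}.

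Next I substitute the polynomials of $f_1$ at the $v_i$ from Table \ref{tab:sf}. Every entry of the $f_1$ column contains the common factor $8(g+1)(2g+1)$, and after removing it the values at $v_1,\dots,v_5$ become
\[
-3(2g+3),\quad g+13,\quad 3g,\quad -2(2g-1),\quad -(g-2).
\]
So $f_1(2j_1-3j_2)$ equals $8(g+1)(2g+1)$ times
\[
27(2g+3)-7(g+13)-57g+10(2g-1)-10(g-2).
\]
The coefficient of $g$ in this expression is $54-7-57+20-10=0$, and the constant term is $81-91-10+20=0$. Hence $f_1(2j_1-3j_2)=0$ identically in $g$.

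By Proposition \ref{prop:n}(i), the vanishing of $f_1$ means that $[\mathfrak{h}_{g,1}(1),2j_1-3j_2]\subset\Ker ES_7$, which is the claim.

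The only real point of care is the range of validity. For $g\ge 3$, Proposition \ref{prop:poly} was obtained by interpolation using Proposition \ref{prop:polynomial}, so the polynomial identity holds for all $g\ge3$. For $g=2$ we have $v_5=0$, and the $v_5$-entry of the $f_1$ column carries the factor $(g-2)$, so it vanishes there as it should. This keeps the formula consistent at $g=2$, where $f_1$ is still defined (Proposition \ref{prop:n}, remark for $g=2$). I expect no genuine obstacle: the computation is a short polynomial cancellation, and the structural input is already contained in Propositions \ref{prop:poly} and \ref{prop:n}.
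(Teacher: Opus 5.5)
Your proof is correct and is essentially the paper's argument: substitute the $f_1$-values from Table~\ref{tab:sf}, then apply Proposition~\ref{prop:n}(i). The only difference is cosmetic. The paper evaluates $f_1(j_1)=72(g+1)(2g+1)$ and $f_1(j_2)=48(g+1)(2g+1)$ separately and uses $2\cdot 72-3\cdot 48=0$, whereas you evaluate $f_1$ directly on $-9v_1-7v_2-19v_3-5v_4+10v_5$. (The paper's printed common factor $(g+1)(2g-1)$ is a misprint for the $(g+1)(2g+1)$ you correctly extracted.)
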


\begin{proof}
Recall that
$j_1=v_2-5v_3-4v_4+2v_5,
j_2=3v_1+3v_2+3v_3-v_4-2v_5$. It follows from Table \ref{tab:sf} that 
\begin{align*}
f_1(j_1)&=(g+1)(2g-1)(8(g+13)-5\cdot 24g-4\cdot (-16)(2g-1)+2\cdot (-8)(g-2))\\
&=72(g+1)(2g-1)\\
f_1(j_2)&=(g+1)(2g-1)\\
&\quad \cdot (3(-24)(2g+3)+3\cdot 8(g+13)+3\cdot 24g-(-16)(2g-1)-2\cdot (-8)(g-2))\\
&=48(g+1)(2g-1).
\end{align*}
Therefore $f_1(2j_1-3j_2)=0$. 
The second assertion follows from Proposition \ref{prop:n}.
\end{proof}

Next we compute $r(2j_1-3j_2)$.
\begin{prop} The equality 
$r(2j_1-3j_2)=2^2\cdot 3^3\cdot 5 \cdot (2g+1)$ holds.
\label{prop:rj}
\end{prop}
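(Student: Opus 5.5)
The plan is a direct linear computation, in the same way as Proposition \ref{prop:re} and the preceding computation of $f_1(2j_1-3j_2)$. Since $r$ is linear, I would compute $r(j_1)$ and $r(j_2)$ separately, using $j_1=v_2-5v_3-4v_4+2v_5$ and $j_2=3v_1+3v_2+3v_3-v_4-2v_5$ together with the polynomials of $r$ at $v_1,\dots,v_5$ in Table \ref{tab:rw}, and then form $2r(j_1)-3r(j_2)$.

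Every entry of the $r$-column of Table \ref{tab:rw} contains the factor $12(2g+1)$. I would pull this factor out first, so that only quadratic polynomials in $g$ remain:
\begin{align*}
r(v_1)&=12(2g+1)\cdot(-2)(g+1)(2g+3), & r(v_2)&=12(2g+1)(g+1)(8g+3),\\
r(v_3)&=12(2g+1)\cdot(-3)(g+1), & r(v_4)&=12(2g+1)(2g-1)(2g+3),\\
r(v_5)&=12(2g+1)(g-2)(4g+3). & &
\end{align*}
For $j_1$ the bracketed quadratics sum to
\[
(8g^2+11g+3)+(15g+15)-(16g^2+16g-12)+(8g^2-10g-12)=18 .
\]
Hence $r(j_1)=216(2g+1)$.

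For $j_2$ the bracketed quadratics sum to
\[
(-12g^2-30g-18)+(24g^2+33g+9)+(-9g-9)-(4g^2+4g-3)-(8g^2-10g-12)=-3 .
\]
Hence $r(j_2)=-36(2g+1)$.

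Combining the two values gives
\[
r(2j_1-3j_2)=(432+108)(2g+1)=540(2g+1)=2^2\cdot 3^3\cdot 5\cdot(2g+1).
\]
The only delicate point is trusting the entries of Table \ref{tab:rw}. These rest on Proposition \ref{prop:polynomial} and Corollary \ref{cor:4g}: $r$ is a polynomial of degree at most $3$ in $g$, so its values at four genera in Tables \ref{tab:g3}--\ref{tab:g7} determine it. As a sanity check I would verify the identity numerically at $g=3$. There, Table \ref{tab:g3} gives $r(j_1)=9072+5040-15120+2520=1512=216\cdot 7$, which agrees with the formula.

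For $g=2$ the element $v_5$ vanishes, so its terms drop out of $j_1$ and $j_2$. The same polynomial identity still holds, because $r(v_5)$ carries the factor $(g-2)$ and is therefore zero at $g=2$.
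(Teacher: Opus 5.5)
Your proposal is correct and is essentially the paper's proof: substitute the $r$-polynomials of Table~\ref{tab:rw} into $2j_1-3j_2=-9v_1-7v_2-19v_3-5v_4+10v_5$. You simply split this as $2r(j_1)-3r(j_2)$ with $r(j_1)=216(2g+1)$ and $r(j_2)=-36(2g+1)$; both values check out, and your numerical check at $g=3$ parallels the paper's alternative verification via Table~\ref{tab:rjj} and Corollary~\ref{cor:4g}.
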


\begin{proof}
We have
\[2j_1-3j_2=-9v_1-7v_2-19v_3-5v_4+10v_5.\]
Then the result follows by substituting the values of $r(v_i)$ given in Table \ref{tab:rw}. 
Alternatively, the result follows from the explicit values 
in Table \ref {tab:rjj} together with Corollary \ref{cor:4g}.
\begin{table}[h]
\caption{$\text{The value of the homomorphism $r$ on $2j_1-3j_2$}$}
\begin{center}
\begin{tabular}{|c|c|}
\noalign{\hrule height0.8pt}
\hfil $g$ & $r(2j_1-3j_2)$  \\
\hline
$2$ & $2700=2^2\cdot 3^3\cdot 5\cdot 5$  \\
\hline
$3$ & $3780=2^2\cdot 3^3\cdot 5\cdot 7$  \\
\hline
$4$ & $4860=2^2\cdot 3^5\cdot 5=2^2\cdot 3^3\cdot 5 \cdot 9$\\
\hline
$5$ & $5940=2^2\cdot 3^3\cdot{5}\cdot {11}$ \\
\hline
$6$ & $7020=2^2\cdot 3^3\cdot 5\cdot 13$ \\
\hline
$7$ & $8100=2^2\cdot 3^4\cdot {5}^2=2^2\cdot 3^3\cdot 5\cdot 15$ \\                                                               
\noalign{\hrule height0.8pt}
\end{tabular}
\end{center}
\label{tab:rjj}
\end{table}
\end{proof}

In view of Propositions \ref{prop:re} and \ref{prop:rj},
if we define
$$
j=2j_1-3j_2
$$
then we have
\[r(180 (g-1)g \ \varepsilon-j)=540 (2g+1)-2^2\cdot 3^3\cdot 5 \cdot (2g+1)=0.\]

We now prove that $\sigma_3(g)=180 (g-1)g\ \varepsilon-j$, that is, $180 (g-1)g\ \varepsilon-j$ 
represents the first Galois obstruction.
We already know that $f_1(180 (g-1)g\ \varepsilon-j)=0$ and 
$r(180 (g-1)g\ \varepsilon-j)=0$. Therefore, in view of Proposition \ref{prop:n}, it remains to show that
\[s(180 (g-1)g\ \varepsilon-j)=0.\]

\begin{prop}\label{prof:esep}
The value of the homomorphism $s$ on the extra element $\varepsilon$ is given by
$$
s(\varepsilon)=3 \frac{g+1}{(g-1)g}.
$$
\label{prop:se}
\end{prop}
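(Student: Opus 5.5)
The plan is to argue exactly as in the proof of Proposition \ref{prop:re}, replacing the polynomials of $r$ by those of $s$ from Table \ref{tab:sf}. By Proposition \ref{prop:poly}, the values of $s$ on the basis elements appearing in $\varepsilon$ are
\begin{align*}
s(v_1)&=0,\quad s(v_3)=-12(g+1)^2(2g+3),\\
s(v_4)&=4(g+1)(2g-1)(10g+13).
\end{align*}
These are valid for all $g$ under the identifications of Remark \ref{rem:inv_stable}.

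To avoid fractions, I will clear denominators. Let $D=2g(2g+1)(2g-2)(2g-1)(2g+2)(2g+3)$, as in the proof of Proposition \ref{prop:re}. Then
\[
D\varepsilon=2(2g-2)(2g-1)v_1+8(2g-1)(2g+3)v_3+3(2g+2)(2g+3)v_4 .
\]
The $v_1$ term contributes nothing since $s(v_1)=0$. The remaining two terms share the factor $(g+1)^2(2g-1)(2g+3)$, using $3(2g+2)=6(g+1)$. Factoring it out gives
\begin{align*}
s(D\varepsilon)&=(g+1)^2(2g-1)(2g+3)\{-96(2g+3)+24(10g+13)\}\\
&=24(2g+1)(g+1)^2(2g-1)(2g+3).
\end{align*}

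Next I rewrite $D=8g(g-1)(g+1)(2g+1)(2g-1)(2g+3)$. Dividing, the common factors $(2g+1)(2g-1)(2g+3)(g+1)$ cancel, leaving
\[
s(\varepsilon)=\frac{24(g+1)}{8g(g-1)}=3\,\frac{g+1}{(g-1)g}.
\]

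The only substantive input is the correctness of the $s$-column of Table \ref{tab:sf}. This rests on Proposition \ref{prop:polynomial} (degree at most $3$ in $g$) together with Corollary \ref{cor:4g} and the computed data in Tables \ref{tab:g3}--\ref{tab:g7}. As a sanity check, I would verify the formula at one genus directly from the raw tables. For example, at $g=3$ the values $s(v_3)=-1728\cdot 6$ and $s(v_4)=3440\cdot 6$ should give $s(\varepsilon)=2$.

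The case $g=2$, where $s$ is not defined (there is no $[1^3]$-summand), is excluded. It is handled, as noted after Proposition \ref{prop:n}, by the condition on $r$ alone.
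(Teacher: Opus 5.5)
Your proof is correct and is essentially the paper's own argument: clear denominators by $D$, substitute the $s$-column of Table~\ref{tab:sf} (with $s(v_1)=0$), factor the result as $24(2g-1)(2g+1)(2g+3)(g+1)^2$, and divide by $D=8g(g-1)(g+1)(2g+1)(2g-1)(2g+3)$. One caveat concerns only your proposed sanity check: the raw entries of Table~\ref{tab:g3} carry an extra factor $6$ relative to the polynomials in Table~\ref{tab:sf} (for instance, $-12(g+1)^2(2g+3)=-1728$ at $g=3$), so plugging in $-1728\cdot 6$ and $3440\cdot 6$ literally gives $12=6\cdot 2$ rather than $2$, and you must drop the common factor $6$. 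This is harmless for the paper, since only the vanishing of $s(180(g-1)g\,\varepsilon-j)$ is used, and Table~\ref{tab:sjj} uses the same normalization.
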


\begin{proof}
By Table \ref{tab:sf}, we see that 
$$
s(v_1)=0,\quad  s(v_3)=-12(g+1)^2(2g+3), \quad 
s(v_4)=4(g+1)(2g-1)(10g+13).
$$
We have
\begin{align*}
&s(2g(2g+1)(2g-2)(2g-1)(2g+2)(2g+3)\varepsilon)\\
=&s(2(2g-2)(2g-1)v_1+8(2g-1)(2g+3)v_3\\
&\hspace{2cm}+3(2g+2)(2g+3)v_4)\\
=&8(2g-1)(2g+3)\cdot (-12)(g+1)^2(2g+3)\\
+&3(2g+2)(2g+3)\cdot 4(g+1)(2g-1)(10g+13)\\
=&(2g-1)(2g+3)(g+1)\{8\cdot (-12)(g+1)(2g+3)+6\cdot 4(g+1)(10g+13)\}\\
=&24 (2g-1)(2g+3)(g+1)^2\{-4(2g+3)+10g+13\}\\
=&24 (2g-1)(2g+1)(2g+3)(g+1)^2.
\end{align*}
Hence
$$
s(\varepsilon)=3 \frac{g+1}{(g-1)g}.
$$
This completes the proof. 
\end{proof}

Next we compute $s(2j_1-3j_2)$.
\begin{prop} The equality $s(2j_1-3j_2)=2^2\cdot 3^3\cdot 5 \cdot (g+1)$ holds.
\label{prop:sj}
\end{prop}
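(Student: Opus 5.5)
The plan is the same as for Proposition \ref{prop:rj}. First I write $j=2j_1-3j_2$ in the basis $v_1,\dots,v_5$:
\[
2j_1-3j_2=-9v_1-7v_2-19v_3-5v_4+10v_5 .
\]
Then I substitute the polynomials of $s$ at the $v_i$ from Table \ref{tab:sf}; these are justified by Propositions \ref{prop:polynomial} and \ref{prop:poly}.

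Since $s(v_1)=0$, the coefficient $-9$ drops out. Every remaining entry of the $s$-column has the factor $4(g+1)$, so I pull it out and evaluate
\begin{align*}
&-7(g+1)(22g-17)+57(g+1)(2g+3)\\
&\quad -5(2g-1)(10g+13)+10(g-2)(14g+11).
\end{align*}
Expanding each product term by term gives:
\begin{align*}
-7(g+1)(22g-17)&=-154g^2-35g+119,\\
57(g+1)(2g+3)&=114g^2+285g+171,\\
-5(2g-1)(10g+13)&=-100g^2-80g+65,\\
10(g-2)(14g+11)&=140g^2-170g-220.
\end{align*}
The $g^2$ coefficients sum to $-154+114-100+140=0$. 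The $g$ coefficients sum to $-35+285-80-170=0$. The constants sum to $119+171+65-220=135$.

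Hence
\[
s(2j_1-3j_2)=4(g+1)\cdot 135=2^2\cdot 3^3\cdot 5\cdot (g+1),
\]
which is the claim.

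The only real point is that the quadratic and linear terms cancel exactly. This cancellation is what makes the value proportional to $g+1$, matching $s(\varepsilon)$ in Proposition \ref{prop:se}. I would cross-check it independently in the same way as Table \ref{tab:rjj}. From the numerical values of $s$ in Tables \ref{tab:g3}--\ref{tab:g6}, for example at $g=3$,
\[
s(2j_1-3j_2)=6\,(-7\cdot 3136+19\cdot 1728-5\cdot 3440+10\cdot 848)=2160=540\cdot 4 .
\]
The analogous evaluations at four genera, combined with Corollary \ref{cor:4g}, pin down the same polynomial.

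For $g=2$ the map $s$ is not defined (there is no $[1^3]$), so the statement concerns $g\ge 3$. This is consistent with the remark after Proposition \ref{prop:n}.
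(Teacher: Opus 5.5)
Your argument is correct and is the paper's own proof: you rewrite $2j_1-3j_2=-9v_1-7v_2-19v_3-5v_4+10v_5$, substitute the polynomials from Table~\ref{tab:sf}, and your expansion checks out, with the quadratic and linear terms cancelling to leave $4(g+1)\cdot 135$. There is one slip in your optional numerical cross-check: the bracket $-7\cdot 3136+19\cdot 1728-5\cdot 3440+10\cdot 848$ already equals $2160$, so the prefactor $6$ must be dropped (as written, your line gives $12960$); Tables~\ref{tab:g3}--\ref{tab:g7} list the $s$-values as multiples of $6$, and Tables~\ref{tab:sf} and \ref{tab:sjj} normalize that factor away.
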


\begin{proof}
We have
$$
2j_1-3j_2=-9v_1-7v_2-19v_3-5v_4+10v_5.
$$
Then the result follows by substituting the values of $s(v_i)$ given in Table \ref{tab:sf}. 
Alternatively, the result follows from the explicit values in Table \ref {tab:sjj} 
together with Corollary \ref{cor:4g}.
\begin{table}[h]
\caption{$\text{The value of the homomorphism $s$ on $2j_1-3j_2$}$}
\begin{center}
\begin{tabular}{|c|c|}
\noalign{\hrule height0.8pt}
\hfil $g$ & $s(2j_1-3j_2)$  \\
\hline
$3$ & $2160=2^4\cdot 3^3\cdot 5=2^2\cdot 3^3\cdot 5\cdot 4$  \\
\hline
$4$ & $2700=2^2\cdot 3^3\cdot 5^2=2^2\cdot 3^3\cdot 5 \cdot 5$\\
\hline
$5$ & $3240=2^3\cdot 3^4\cdot{5}=2^2\cdot 3^3\cdot 5\cdot 6$ \\
\hline
$6$ & $3780=2^2\cdot 3^3\cdot 5\cdot 7$ \\
\hline
$7$ & $4320=2^5\cdot 3^3\cdot {5}=2^2\cdot 3^3\cdot 5\cdot 8$ \\                                                               
\noalign{\hrule height0.8pt}
\end{tabular}
\end{center}
\label{tab:sjj}
\end{table}
\end{proof}

\noindent
\begin{proof}[Proof of Theorem $\ref{th:main}$]
By Propositions \ref{prof:esep} and \ref{prop:sj}, we have
\[s(180 (g-1)g \ \varepsilon-j)=540(g+1)-540 (g+1)=0.\]
Hence $180 (g-1)g\ \varepsilon-j$ is a 
generator of
$\mathcal{N}(6)/\mathrm{Im}\tau_{g,1}(6)$. 
Note that such an element is unique up to multiplication by a nonzero scalar 
and addition of an element of the Johnson image.
This completes the proof.
\end{proof}

\vspace{3mm}
\begin{proof}[Proof of Theorem $\ref{th:es}$]
We have to prove the equality 
\[ES_6^1(\sigma_3(g))/ES_6^2(\sigma_3(g))=2g-1.\]
Since $ES_6(\varepsilon)=0$, and also we are considering the ratio,
we have
\[ES_6^1(\sigma_3(g))/ES_6^2(\sigma_3(g))=ES_6^1(j)/ES_6^2(j).\]
We compute $ES_6^i(j)$ for $i=1,2$ using the values of the
Enomoto-Satoh trace maps on the basis elements $v_i\ (i=1, 2, 3, 4, 5)$. 
These values are obtained by multiplying the values on the normalized basis elements $\bar{v_i}$ 
given in Table \ref{tab:ESnn} by the corresponding eigenvalues given in Table \ref{tab:ev}.
\begin{align*}
ES_6^1(j)&=
\begin{pmatrix}
-9 & -7 & -19 & -5 & 10
\end{pmatrix}
\begin{pmatrix}
6\cdot 2g(2g+1)(2g+2)(2g+3)\\
27\cdot (2g-2)2g(2g+1)(2g+2)\\
3\cdot (2g-2)2g(2g+1)(2g+2)\\
-12\cdot (2g-2)(2g-1)2g(2g+1)\\
-3\cdot (2g-4)(2g-2)2g(2g+1)
\end{pmatrix}
\\
&=2g(2g+1)\{-9\cdot 6 (2g+2)(2g+3)-7\cdot 27 (2g-2)(2g+2)\\
&\hspace{3mm}-19\cdot 3(2g-2)(2g+2)-5\cdot (-12)(2g-2)(2g-1)+10\cdot (-3)(2g-4)(2g-2)\}\\
&=2g(2g+1)\cdot \{-270(2g)^2-270 (2g)+540\}\\
&=2g(2g+1)\cdot (-270) (2g-1)(2g+2)
\end{align*}

\begin{align*}
ES_6^2(j)&=
\begin{pmatrix}
-9 & -7 & -19 & -5 & 10
\end{pmatrix}
\begin{pmatrix}
6\cdot 2g(2g+1)(2g+2)(2g+3)\\
-1\cdot (2g-2)2g(2g+1)(2g+2)\\
-3\cdot (2g-2)2g(2g+1)(2g+2)\\
4\cdot (2g-2)(2g-1)2g(2g+1)\\
1\cdot (2g-4)(2g-2)2g(2g+1)
\end{pmatrix}
\\
&=2g(2g+1)\{-9\cdot 6 (2g+2)(2g+3)-7\cdot (-1)(2g-2)(2g+2)\\
&\hspace{3mm}-3\cdot (-3)(2g-2)(2g+2)-5\cdot 4(2g-2)(2g-1)+10\cdot (2g-4)(2g-2)\}\\
&=2g(2g+1)\cdot \{-270 (2g)-540\}\\
&=2g(2g+1)\cdot (-270)(2g+2)
\end{align*}
Hence $ES_6^1(j)/ES_6^2(j)=2g-1$. This completes the proof.
\end{proof}

\section{Questions}

Let $\sigma_{2k+1}(g)\in \mathfrak{h}_{g,1}(4k+2)^{\mathrm{Sp}}\ (g=1,2,3,\ldots, k=1,2,\ldots)$ be the 
Galois obstructions. 
As mentioned in Introduction, they are not canonically defined. 
However, if we consider these elements 
modulo the commutator ideal of $\Im \hat{\tau}_{g,1}$, then
they are canonically defined up to
nonzero scalar and modulo the Johnson image. 
Now they behave considerably differently
in the cases $g=1$ and $g\ge 2$.
For a detailed study for the former case from the point of view
of number theory, we refer to the work
\cite{hma} of Hain and Matsumoto. Here we consider these
elements from the topological viewpoint.

If $g=1$, then $\mathcal{L}_{1,1}(2)=\wedge^2 H\cong \Q$ generated by 
$\omega_0=x\wedge y$. Hence, for any $k\geq 2$, $\mathcal{L}_{1,1}(k)$ 
coincides with the degree $k$ part of the Lie ideal generated by $\omega_0$. 
Together with the fact $\mathfrak{h}_{1,1}(1)=0$, we see that 
\[\mathfrak{h}_{1,1}=\mathfrak{j}_{1,1},\]
so that 
$\sigma_{2k+1}(1)\in \mathfrak{j}_{1,1}(4k+2)$ for all $k$. However, this is an
exceptional case. To see why this case is exceptional, 
we recall from \cite[Theorem 3.6]{mss4} that 
we have a direct sum decomposition
\begin{equation}\label{eq:decomp}
\mathfrak{h}_{g,1}(k)=
\mathfrak{j}_{g,1}(k)\oplus \mathcal{L}_g(k)\oplus \Im \tau_g(k)\oplus \Coker \tau_g(k).
\end{equation}
Here $\mathcal{L}_g=\oplus_{k=1}^\infty \mathcal{L}_g (k)$ 
is the quotient graded Lie algebra of $\mathcal{L}_{g,1}$ by the Lie ideal generated 
by $\omega_0$, and $\tau_g$ is the (rational) Johnson homomorphism 
associated with the Johnson filtration of the mapping class group of the closed surface $\Sigma_g$. 
The target of $\tau_g$ is given by $\mathfrak{h}_g =\oplus_{k=1}^\infty \mathfrak{h}_g (k)$ with 
\[\mathfrak{h}_{g}(k)=\mathfrak{h}_{g,1}(k)/(\mathfrak{j}_{g,1}(k)\oplus\mathcal{L}_g(k)).\]
In view of the decomposition (\ref{eq:decomp}), 
it 
follows from the works of Takao \cite{takao} and Hain \cite{hain} that 
\[\sigma_{2k+1}(g)\not\in \mathfrak{j}_{g,1}(4k+2)^{\mathrm{Sp}}\oplus \mathcal{L}_{g}(4k+2)^{\mathrm{Sp}}\]
for all $g\geq 2$ and $k \ge 1$, 
because the arithmetic Johnson homomorphism is also defined in the setting of closed surfaces, 
although Hain describes the case of compact surfaces with one boundary component.

Let 
\[ES_{4k+2} \colon \mathfrak{h}_{g,1}(4k+2)^{\mathrm{Sp}}\longrightarrow
\mathfrak{a}_{g}(4k)^{\mathrm{Sp}}\]
be the Enomoto-Satoh trace map.
Then, as already mentioned in $\S$ \ref{sec:results}, 
it is explained in Hain's paper \cite{hain} that results of Alekseev, Kawazumi, Kuno and Naef
show that the Turaev cobracket of $\sigma_{2k+1}(g)$ is non-trivial for all $g \ge 2$ and $k \ge 1$. 
Their results also imply that 
\[ES_{4k+2}(\sigma_{2k+1}(g))\not=0.\]
In this paper, we have determined the value of $ES_6(\sigma_3(g))=ES_6(-j)\ (\not=0)$ explicitly.

\begin{question}
Is it possible to compute the Turaev cobracket of the element $j\in \mathfrak{j}_{g,1}(6)^{\mathrm{Sp}}$?
\end{question}

On the other hand, 
the non-triviality of $ES_6(\sigma_3(g))$ arises in a rather subtle way from the ideal $\mathfrak j_{g,1}$. 
In fact, if we pass to the quotient by this ideal, then 
$\Ker  ES_6$ and $\mathcal{N}(6)^{\mathrm{Sp}}$ coincide.
We therefore define
$$
\overline{ES}_k\colon \mathfrak{h}_{g}(k)\longrightarrow \mathfrak{a}_g(k-2)/ES_k(\mathfrak{j}_{g,1}(k)).
$$
Note that $ES_k=0$ on $\mathcal{L}_g(k)$, so the above 
definition is well-defined. We may call this homomorphism
$\overline{ES}_k$ the {\it reduced} Enomoto-Satoh trace map.

\begin{question}

\noindent
Let 
$$
\overline{ES}_{4k+2} \colon \mathfrak{h}_{g}(4k+2)^{\mathrm{Sp}}\longrightarrow
\mathfrak{a}_{g}(4k)^{\mathrm{Sp}}/ES_{4k+2}(\mathfrak{j}_{g,1}(4k+2))
$$
be the reduced Enomoto-Satoh trace map. Is it true that
$$
\overline{ES}_{4k+2}(\sigma_{2k+1}(g))=0?
$$
We have shown that this holds for $k=1$ and $g\ge 2$.

If the answer to the above question is affirmative, 
what is the meaning of the ``correction term",
which should be an element of $\mathfrak{j}_{g,1}(4k+2)^{\mathrm{Sp}}$?

\label{probles}
\end{question}

Hain \cite{hain} proved that the Turaev cobracket and hence the Enomoto-Satoh
trace map vanish on the arithmetic Johnson image of the {\it commutator ideal}
of the motivic Lie algebra $\mathcal{L}(\sigma_3,\sigma_5,\ldots)$.
If the answer to the above question is affirmative, then the {\it reduced} Enomoto-Satoh trace
map vanishes on the entire arithmetic Johnson image.
Conant \cite{conant}, Conant-Kassabov \cite{ck} and Kuno-Sato \cite{ks} 
constructed further obstructions to the Johnson image beyond the Enomoto-Satoh trace map.
However, it seems that no obstruction is known at present which detects any of 
the Galois obstructions in the Johnson cokernel of the mapping class group of a {\it closed} surface.
The following is a deep question concerning the structure of the Johnson cokernel. 

\begin{question}
\noindent
Are there effective methods for distinguishing the arithmetic Johnson image
from the original geometric Johnson image
at the level of the mapping class group of a closed surface?
\end{question}

Another related deep question is whether there are effective methods for determining the abelianization 
$H_1(\mathfrak{h}_{g,1})$ of the Lie algebra $\mathfrak{h}_{g,1}$, in light of the works of 
Conant-Kassabov-Vogtmann \cite{ckv} and Bartholdi \cite{b} (see also \cite{mssa}).
We note that a fundamental result of Hain \cite{haint} states that 
the Johnson image is equal to the Lie subalgebra of $\mathfrak{h}_{g,1}$ generated by the 
degree $1$ 
Johnson image $\Im \tau_{g,1} (1)=\mathfrak{h}_{g,1}(1)=\wedge^3 H$ in degree $1$. 
Consequently, every element of the abelianization $H_1(\mathfrak{h}_{g,1})$ outside 
$\wedge^3 H$ must come from the Johnson cokernel.

\begin{question}
\noindent
For any fixed $k$, what is the relation among $\sigma_{2k+1}(g)$ for $g=1,2,3,\ldots$?
\end{question}

For $k=1$, our result shows that the Galois obstruction can be 
written as the sum of two terms: $\sigma_3(g)=180 (g-1)g\ \varepsilon-j\ (g\geq 2)$.
These two terms are considerably different in character. 
For example, the essential term 
$\varepsilon\in \Ker  ES_6$ is expressed as a linear combination of the normalized basis elements, 
while the ``correction term" $j\in \mathfrak{j}_{g,1}(6)^{\mathrm{Sp}}$ is expressed as 
a linear combination of the usual basis elements. In both cases, the coefficients are 
independent of $g$ up to an overall scalar factor.
In the terminology of \cite{mss4}, the former term is {\it i-stable} while the latter term
is {\it p-stable}.

\bibliographystyle{amsplain}

\begin{thebibliography}{30}

\bibitem{akkn}
A. ~Alekseev, N. ~Kawazumi, Y. ~Kuno, F. ~Naef,
\textit{The Goldman-Turaev Lie bialgebra and the Kashiwara-Vergne problem in higher genera}, 
arXiv:1804.09566 [math.GT].

\bibitem{an}
M.~Asada, H.~Nakamura, 
\textit{On the graded quotient modules of mapping class groups of surfaces},
Israel J.\ Math. 90 (1995), 93--113. 

\bibitem{b}
L. ~Bartholdi, 
\textit{The rational homology of the outer automorphism group of $F_7$}, 
New York J. Math. 22 (2016), 191--197.

\bibitem{brown}
F.~Brown, 
\textit{Mixed Tate motives over $\mathbb{Z}$}, Ann.\ Math. 175 (2012), 949--976. 

\bibitem{conant}
J. ~Conant, 
\textit{The Johnson cokernel and the Enomoto-Satoh invariant}, 
Algebr.\ Geom.\ Topol.\ 15 (2015), 801--821.


\bibitem{ck}
J. ~Conant, M. ~Kassabov,
\textit{Hopf algebras and invariants of the Johnson cokernel}, 
Algebr.\ Geom.\ Topol.\ 16 (2016), 2325--2363.


\bibitem{ckv}
J. ~Conant, M. ~Kassabov, K. ~Vogtmann, 
\textit{Hairy graphs and the unstable homology of
$\mathrm{Mod}(g,s)$, $\mathrm{Out}(F_n)$ and $\mathrm{Aut}(F_n)$}, 
J. \ Topol. 6 (2013), 119--153.

\bibitem{es}
N.~Enomoto, T.~Satoh, 
\textit{New series in the Johnson cokernels of the mapping class groups of surfaces}, 
Algebr.\ Geom.\ Topol.\ 14 (2014), 627--669.

\bibitem{FNW}
M.~Felder, F.~Naef, T.~Willwacher, 
\textit{Stable cohomology of graph complexes}, 
Selecta Math. 29 (2023), no. 2, Paper No. 23, 72 pp.

\bibitem{fh}
W.~Fulton, J.~Harris, \textit{Representation Theory}, 
Graduate Texts in Mathematics 129, Springer-Verlag, 1991.

\bibitem{gl} S.~Garoufalidis, J.~Levine, 
\textit{Tree-level invariants of three-manifolds, Massey products and the Johnson homomorphism}, 
Graphs and patterns in mathematics and theoretical physics, 
Proc.\ Sympos.\ Pure Math. 73 (2005), 173--205.

\bibitem{hm}
K.~Habiro, G.~Massuyeau, 
\textit{From mapping class groups to monoids of homology cobordisms: a survey}, 
Handbook of Teichm\"uller theory volume III (editor: A. Papadopoulos) (2012), 465--529.


\bibitem{haint}
R.~Hain, 
\textit{Infinitesimal presentations of the Torelli groups}, 
J.\ Amer. \ Math. \ Soc. 10 (1997), 597--651.

\bibitem{hain}
R. Hain,
\textit{Johnson homomorphisms}, 
EMS Surveys \ Math.\ Sci. 7 (2020), 33--116.

\bibitem{hma}
R. ~Hain, M. ~Matsumoto,
\textit{Universal mixed elliptic motives}, 
J. Inst. Math. Jussieu {\bf 19} (2020), 663--766. 


\bibitem{j}
D. ~Johnson,
\textit{An abelian quotient of the mapping class group $\mathcal{I}_g$},
Math. Ann. 249 (1980), 225--242.

\bibitem{j2}
D. ~Johnson, 
\textit{A survey of the Torelli group}, 
in: Low-dimensional topology (San Francisco, Calif., 1981), 
Contemp. Math. {\bf 20}, 165--179. Amer. Math. Soc., 
Providence, RI, 1983.

\bibitem{kk}
N.~Kawazumi, Y.~Kuno,
\textit{The Goldman-Turaev Lie bialgebra and the Johnson homomorphisms},
``Handbook of Teichm\"uller theory'', edited by A. Papadopoulos,
Volume V, EMS Publishing House, Z\"urich, 2015, 98--165. 

\bibitem{ks}
Y.~Kuno, M.~Sato, 
\textit{On the 2-loop part of the Johnson cokernel}, 
arXiv:2508.19041 [math.GT].

\bibitem{KRW}
A.~Kupers, O.~Randal-Williams, 
\textit{On the Torelli Lie algebra}, 
Forum Math. Pi 11 (2023), Paper No. e13, 47 pp.

\bibitem{matsumoto}
M.~Matsumoto, 
\textit{Galois representations on profinite braid groups on curves}, 
J.\ Reine Angew.\ Math. 474 (1996), 169--219. 

\bibitem{matsumotopcmi}
M.~Matsumoto, 
\textit{Introduction to arithmetic mapping class groups}, 
in ``Moduli Spaces of Riemann Surfaces", 
IAS/Park City Mathematical Series, Vol. 20,
321--356, Amer. Math. Soc., 2013.

\bibitem{morita93} S.~Morita, 
\textit{Abelian quotients of subgroups of the mapping class group of surfaces}, 
Duke Math.\ J. 70 (1993), 699--726.

\bibitem{morita99} S.~Morita, 
\textit{Structure of the mapping class groups 
of surfaces: a survey and a prospect}, 
Geometry and Topology monographs 2, 
\textit{Proceedings of the Kirbyfest} (1999), 349--406.

\bibitem{mss4}
S. ~Morita, T. ~Sakasai, M. ~Suzuki, 
\textit{Structure of symplectic invariant Lie subalgebras of symplectic derivation Lie algebras}, 
Advances in Mathematics 282 (2015), 291--334.

\bibitem{mssa}
S. ~Morita, T. ~Sakasai, M. ~Suzuki, 
\textit{An abelian quotient of the symplectic derivation Lie algebra of the free Lie algebra}, 
Exp.\ Math. 27 (2018), 302--315.

\bibitem{NW}
F.~Naef, T.~Willwacher, 
\textit{The Johnson homomorphism, embedding calculus and graph complexes}, 
arXiv:2602.09915 [math.QA].

\bibitem{nakamura}
H.~Nakamura, 
\textit{Coupling of universal monodromy representations of Galois-Teichm\"uller 
modular groups}, Math.\  Ann. 304 (1996), 99--119.

\bibitem{takao} 
N.~Takao, \textit{Braid monodromies on proper curves and pro-$\ell$ Galois representations}, 
J.\ Inst.\ Math.\ Jussieu 11 (2012), 161--181.

\end{thebibliography}

\end{document}